\theoremstyle{plain}
\newtheorem{theorem}{Theorem}[section]
\newtheorem{lemma}[theorem]{Lemma}
\newtheorem{proposition}[theorem]{Proposition}
\theoremstyle{definition}
\newtheorem{definition}[theorem]{Definition}
\newtheorem{thm}{Theorem}
\theoremstyle{remark}
\newtheorem{notation}[theorem]{Notation} 
\newtheorem{remark}[theorem]{Remark}
\newtheorem{ex}[theorem]{Example}
\newtheorem*{acknowledgments}{Acknowledgments}
\numberwithin{equation}{section}
\numberwithin{figure}{section}
\newcommand{\bd}{\begin{description}}   
\newcommand{\ed}{\end{description}} 
\newcommand{\ba}{\begin{array}}      \newcommand{\ea}{\end{array}} 
\newcommand{\bc}{\begin{center}}     \newcommand{\ec}{\end{center}} 
\newcommand{\be}{\begin{enumerate}}  \newcommand{\ee}{\end{enumerate}} 
\newcommand{\beq}{\begin{eqnarray}}  \newcommand{\eeq}{\end{eqnarray}} 
\newcommand{\beQ}{\begin{eqnarray*}} \newcommand{\eeQ}{\end{eqnarray*}} 
\newcommand{\bi}{\begin{itemize}}    \newcommand{\ei}{\end{itemize}}
\newcommand{\dessin}[2]{
  \vcenter{\hbox{\includegraphics[height=#1]{#2.pdf}}}}
\newcommand{\lk}{\textrm{lk}}
\begin{document} 
%
%
\title[Kontsevich-LMO, Conway and Casson-Walker-Lescop invariants]{Universal invariants, the Conway polynomial and the Casson-Walker-Lescop invariant}
\author[A. Casejuane ]{Adrien Casejuane} 
\address{Univ. Grenoble Alpes, CNRS, IF, 38000 Grenoble, France}
\author[J.B. Meilhan ]{Jean-Baptiste Meilhan} 
\address{Univ. Grenoble Alpes, CNRS, IF, 38000 Grenoble, France}
\email{jean-baptiste.meilhan@univ-grenoble-alpes.fr}

\begin{abstract} 
We give a general surgery formula for the Casson-Walker-Lescop invariant of closed $3$-manifolds, seen as the leading term of the LMO invariant, in a purely  diagrammatic and combinatorial way. This provides a new viewpoint on a formula established by C.~Lescop for her extension of the Walker invariant. 
A central ingredient in our proof is an explicit identification of the coefficients of the Conway polynomial as combinations of coefficients in the Kontsevich integral. 
This  latter result relies on general \lq factorization formulas\rq\, for the Kontsevich integral coefficients. 
\end{abstract} 


%
\subjclass{57M27,57M25}
\keywords{Kontsevich integral, LMO invariant, Casson invariant, Alexander-Conway polynomial, Jacobi diagrams}

\thanks{The second author is partially supported by the project AlMaRe (ANR-19-CE40-0001-01) of the ANR}

\maketitle
\section{Introduction}

A.~Casson defined in 1985 an invariant of integral homology spheres, by counting conjugacy classes of irreducible $SU(2)$--representations of the fundamental group  \cite{Akbulut-McCarthy,AM}.  
The Casson invariant was extended, first to rational homology spheres by  K.~Walker \cite{Walker1992}, then to all oriented closed $3$-manifolds by C.~Lescop \cite{Lescop}, via surgery formulas. 
We denote by $\lambda_L$ this \emph{Casson-Walker-Lescop invariant}. 

In \cite{Le-Murakami-Ohtsuki}, T.~Q.~T.~Le, J.~Murakami and T.~Ohtsuki defined an invariant of closed oriented $3$-manifolds. 
This \emph{LMO invariant} is built from the Kontsevich integral \cite{Kontsevich1993} of a surgery presentation, i.e. a framed link in $S^3$.
The \emph{Kontsevich integral} of a framed $n$-component link takes values in a graded space of chord diagrams on $n$ circles, 
while the LMO invariant lives in a graded space of trivalent diagrams; 
the procedure for extracting the latter invariant from the former one relies on a family of sophisticated combinatorial maps $\iota_n$ that \lq\lq replace circles by sums of  trees\rq\rq.   
The Kontsevich integral is universal among $\mathbb{Q}$-valued Vassiliev invariants, in the sense that any such invariant factors through the Kontsevich integral. 
Likewise, the LMO invariant is universal among $\mathbb{Q}$-valued finite type invariants of rational homology spheres.
Both invariants admit purely combinatorial and diagrammatic definitions, although concrete computations are in general rather difficult. 

A striking result is that the leading term of the LMO invariant, i.e. the coefficient of the lowest degree trivalent diagram $\dessin{0.5cm}{T}$, is up to a known factor the  Casson-Walker-Lescop invariant \cite{Le-Murakami-Murakami-Ohtsuki,Beliakova-Habegger}. 
This provides, in principle, a combinatorial procedure for computing the Casson-Walker-Lescop invariant from a surgery presentation, by computing the Kontsevich integral and keeping track of the coefficients of chord diagrams that produce a diagram $\dessin{0.5cm}{T}$ under the LMO procedure. 
This paper shows how this can be done completely explicitly, in terms of (classical) link invariants. 
Our first main result is as follows.
\begin{thm}[Thm.~\ref{Thm general}]\label{thm1}
Let $L$ be a framed oriented $n$-component link in $S^3$, 
and let $\mathbb{L}$ denote its linking matrix. 
Let $S^3_L$ be the result of surgery on $S^3$ along $L$. 
The  Casson-Walker-Lescop invariant $\lambda_L(S^3_L)$  is given by 
$$ \frac{(-1)^{\sigma_-(L)} \det \mathbb{L}}{8} \sigma(L) 
+ (-1)^{n+\sigma_-(L)}\sum_{k=1}^{n} \sum_{\substack{I \subset \{ 1, \ldots, n \} \\ |I| = k}} (-1)^{n - k} \det \mathbb{L}_{\check{I}} \mu_k(L_I),$$\\[-0.5cm]
where 
\begin{itemize}
 \item $\mathbb{L}_{\check{I}}$ is the matrix obtained from $\mathbb{L}$ by 
deleting the lines and column indexed by a subset $I$ of $\{ 1, \ldots, n \}$,
 \item $\sigma_+(L)$ and $\sigma_-(L)$ denote, respectively, the number of positive and negative eigenvalues of $\mathbb{L}$, and 
       $\sigma(L)=\sigma_+(L)-\sigma_-(L)$, 
 \item $\mu_k$ is a $k$-component framed link invariant which is explicitly determined by the coefficients of $\mathbb{L}$ and the Conway polynomial.  
\end{itemize}
\end{thm}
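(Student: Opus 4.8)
The plan is to compute the coefficient of the leading trivalent diagram $\dessin{0.5cm}{T}$ in the LMO invariant $Z^{\mathrm{LMO}}(S^3_L)$ directly from the surgery presentation $L$, and then to conclude by the known identification of this coefficient with $\lambda_L$, up to an explicit factor \cite{Le-Murakami-Murakami-Ohtsuki,Beliakova-Habegger}. Recall that $Z^{\mathrm{LMO}}(S^3_L)$ is obtained from the Kontsevich integral $Z(L)\in\mathcal{A}(\sqcup^n S^1)$ by first normalizing it by the $\pm 1$-framed unknots (so as to remove the dependence on $\sigma_+(L)$ and $\sigma_-(L)$), then applying the map $\iota_n$, and finally renormalizing by the $\iota_n$-images of the same unknots. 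The first step is a degree-and-valence bookkeeping for $\iota_n$: I will show that the $\dessin{0.5cm}{T}$-coefficient of $Z^{\mathrm{LMO}}(S^3_L)$ depends only on a bounded part of the normalized Kontsevich integral $\check Z(L)$, concretely on its strut part — i.e. the linking matrix $\mathbb{L}$, with framings on the diagonal — together with the coefficients of a short, explicit list of connected low-complexity diagrams (roughly, those with at most two trivalent vertices once the struts are stripped off) supported on sublinks $L_I$. This reduces the theorem to a finite computation.

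Next I identify the surviving coefficients. The struts of $\check Z(L)$ are exactly $\mathbb{L}$, and under $\iota_n$ they are what gets glued pairwise; the remaining relevant coefficients are the ones carrying the topological content of the sublinks. For these I will invoke the factorization formulas for the Kontsevich integral coefficients established earlier in the paper, together with the explicit identification of the coefficients of the Conway polynomial as combinations of Kontsevich integral coefficients. This is precisely what makes $\mu_k$ explicit: for a sublink $L_I$ with $|I|=k$, the invariant $\mu_k(L_I)$ is by construction the combination of surviving coefficients of $\check Z(L_I)$, and the factorization formulas rewrite it as a polynomial in the entries of $\mathbb{L}$ restricted to $I$ and the coefficients of the Conway polynomial of $L_I$ and its sublinks. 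That $\mu_k(L_I)$ is a framed-link invariant of $L_I$ alone is clear, since it is read off from $\check Z(L_I)$; the content of the last bullet of the statement is the Conway-polynomial expression.

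It then remains to assemble the $\iota_n$-procedure into the stated cofactor formula. Feeding the (product over disjoint sublinks of the) loop contributions into $\iota_n$ and closing up the leftover legs through the struts of $\check Z(L)$ forces a sum over subsets $I\subset\{1,\dots,n\}$: the circles outside $I$ contribute only their degree-one part and close into a strut-forest of total weight $\det\mathbb{L}_{\check I}$, while the circles in $I$ carry the loop factor $\mu_{|I|}(L_I)$. Dividing by the $\iota_n$-images of the unit-framed unknots and tracking the sources of signs — the $(-1)^{\#\mathrm{legs}/2}$ factors intrinsic to $\iota_n$, the identity $\det\mathbb{L}=(-1)^{\sigma_-(L)}\lvert\det\mathbb{L}\rvert$, and the LMO degree normalization — produces the prefactors $(-1)^{\sigma_-(L)}$, $(-1)^{n+\sigma_-(L)}$ and $(-1)^{n-k}$. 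Finally, the purely framing-dependent part of the normalization, governed by the ``wheels'' series $\nu=Z(U)$, contributes the remaining term $\tfrac{(-1)^{\sigma_-(L)}\det\mathbb{L}}{8}\,\sigma(L)$, which is the diagrammatic avatar of the signature-defect correction familiar from the classical Casson and Walker surgery formulas.

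The main obstacle is the interaction of the last two steps: pinning down exactly which higher coefficients of $\check Z(L)$ reach the $\dessin{0.5cm}{T}$-coefficient after $\iota_n$ and the normalizations, and then checking that the resulting sum reorganizes as the cofactor expansion $\sum_{I}\det\mathbb{L}_{\check I}\,\mu_{|I|}(L_I)$ with all signs and combinatorial weights consistent — in particular reconciling the strut-gluing combinatorics (which produces the minors $\det\mathbb{L}_{\check I}$) with the unknot normalization (which produces the $\tfrac18\,\sigma(L)$ term). By contrast, the degree-and-valence truncation is routine, and the Conway-polynomial rewriting of $\mu_k$ is supplied by the factorization formulas, so the delicate point is genuinely the sign-and-weight bookkeeping of the $\iota_n$-procedure.
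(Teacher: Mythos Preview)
Your overall strategy matches the paper's: compute the coefficient of $\dessin{0.5cm}{T}$ in $Z^{\mathrm{LMO}}_1(S^3_L)$ from the Kontsevich integral of $L$ and invoke Theorem~\ref{LMOCasson}. However, the technical route you sketch diverges from the paper and contains a few misattributions.

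First, the paper never uses the general $\iota_n$; since only the degree~$\le 1$ part of LMO is needed, everything is done with $\iota_1$ alone (in the simplified form of Section~\ref{sec:LMO}). This makes the ``degree-and-valence bookkeeping'' completely concrete: a chord diagram survives $\iota_1$ in degree~$0$ iff every circle carries exactly two legs (a \emph{chain of circles}, Definition~\ref{rem:chain}), and in degree~$1$ iff it is a disjoint union of chains of circles together with a single \emph{essential} diagram (or, in the $k=1$ case, the empty circle picking up the $\nu$-normalization). There is no strut-gluing or Aarhus-style formal Gaussian integration; the minors $\det\mathbb{L}_{\check I}$ arise directly from Lemma~\ref{a_la_chain}, which computes the Kontsevich coefficient of a chain of circles as a cyclic product of linking numbers, and the observation that such chains close into $-2$ under $\iota_1$. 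This is Lemma~\ref{eq1} for the degree~$0$ part and Lemma~\ref{eq2} for the degree~$1$ part.

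Second, your account of the $\frac{(-1)^{\sigma_-(L)}\det\mathbb{L}}{8}\sigma(L)$ term is off. It does not come from the wheels series $\nu$; it arises from the product structure in Equation~(\ref{eq:main}): the denominator $\iota_1(\check Z(U_\pm))^{\sigma_\pm}$ contributes a $\dessin{0.5cm}{T}$ with coefficient $\frac{(-1)^{\sigma_+(L)}\sigma(L)}{16}$ (Equation~(\ref{Denominateur LMO})), and this is multiplied by the degree-$0$ part of the numerator, namely $(-1)^n\det\mathbb{L}$. The sign $(-1)^{n+\sigma_-(L)}$ then comes simply from $n=\sigma_+(L)+\sigma_-(L)+\beta_1(S^3_L)$, not from any $(-1)^{\#\mathrm{legs}/2}$ mechanism (which is not a feature of the paper's $\iota_1$). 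So the ``delicate sign-and-weight bookkeeping'' you anticipate is in fact a one-line computation once the two lemmas are in hand; the genuine work in the paper lies upstream, in Sections~\ref{sec:facto}--\ref{sec:mu_n}, where the essential-diagram coefficients are identified with the $\mu_k$.
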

We do not give here the general explicit formula for the invariant $\mu_k$, which is postponed to Theorem \ref{cor:mun}. 
Let us only give here the formulas for the first two of these invariants: for a framed knot $K$ we have 
$ \mu_1(K) = \frac{1}{24} fr(K)^2 - c_2(K) + \frac{1}{12}$,   
and if $L=K_1\cup K_2$ is a $2$-component framed link, then $\mu_2(L)$ is given by 
$$ \frac{1}{12} \lk(L)^3 + \frac{fr(K_1) + fr(K_2)}{12} \lk(L)^2 + \lk(L)\Big( c_2(K_1) + c_2(K_2) - \frac{1}{12} \Big) - c_3(L), $$
where $c_k$ denotes the coefficient of $z^k$ in the Conway polynomial. 
These two invariants are involved in the case $n=2$ of Theorem \ref{thm1}, which recovers a result of S.~Matveev and M.~Polyak \cite[Thm.~6.3]{Matveev-Polyak} 
for the Casson-Walker invariant of rational homology spheres; see Remark \ref{rem:cestcadeaucamfaitplaisir} for details.

\begin{remark}
Theorem \ref{thm1} recovers the third global surgery formula of Lescop \cite[Prop.~1.7.8]{Lescop}, 
when restricted to integral surgery coefficients (see Remark \ref{rem:lescoop}). 
The proof relies centrally on the \lq key result\rq\, connecting the leading term of the LMO invariant and the Casson-Walker-Lescop invariant \cite{Le-Murakami-Murakami-Ohtsuki,Beliakova-Habegger}. Since both proofs of this \lq key result\rq\, make use of surgery formulas from \cite{Lescop}, it seems necessary to clarify here how the present result is independent from \cite[Prop.~1.7.8]{Lescop}. 
\\
The original proof of \cite{Le-Murakami-Murakami-Ohtsuki} indeed makes use  of Lescop's third surgery formula.
But the alternative proof of \cite{Beliakova-Habegger} uses Lescop's \emph{first} surgery formula, \cite[1.4.8]{Lescop}, in terms of the so-called $\zeta$-coefficients extracted from the multivariable Alexander polynomial. In fact, \cite[Prop.~1.7.8]{Lescop} is proved in \cite[\S~6.4]{Lescop} using this first formula.\\
More importantly, both proofs of the \lq key result\rq\, use only a very special case of Lescop's formulas: in both \cite{Le-Murakami-Murakami-Ohtsuki} and \cite{Beliakova-Habegger}, the so-called \lq diagonalization lemma\rq\, (see e.g. \cite[Lem.~6]{Le-Murakami-Murakami-Ohtsuki}) is used to reduce the proof to the case of \emph{algebraically split} surgery presentations, that is, for links with zero linking numbers. This means that the \lq key result\rq\ uses a significantly weaker and simplified version of Lescop's formulas.\footnote{For algebraically split links, our invariant $\mu_k$ is just given by $-c_{k+1}$; this is to be compared with the much more involved formulas in Theorem \ref{cor:mun} and Definition \ref{U_n} for the general case.}
\end{remark}

We stress that the core of our first main result is the computation of the leading term of the LMO invariant, by purely diagrammatic methods. 
As a matter of fact, we expect that the techniques developed in this paper could be used in the future to address the next degree term of LMO, which is yet to be understood in general. 
\medskip

In order to prove Theorem \ref{thm1}, we provide in this paper a number of formulas identifying certain combinations 
of coefficients of the Kontsevich integral in terms of classical link invariants; see Sections 3 and 4. 
Such formulas are interesting in themselves, and rather few similar results are known up to now, see \emph{e.g.} \cite{HM,Stanford,Okamoto1997,Okamoto1998,JB}. 
Our formulas are derived from general factorization results, 
which show how certain local configurations in sums of coefficients in the Kontsevich integral, yield a factorization by simple link invariants; see Section \ref{sec:facto}.\\
As an example, the second main result of this paper 
uses these techniques to give an explicit identification for the $z^{n+1}$-coefficient $c_{n+1}$ of the Conway polynomial of an $n$-component link. 
This identification relies on the definition, outlined below, of a family of chord diagrams  
which are recursively built from a couple of low degree diagrams by simple local operations. 
Specifically, consider the following two local operations on chord diagrams, called \emph{inflation} and \emph{infection}: 
\begin{center}
\includegraphics[scale=0.8]{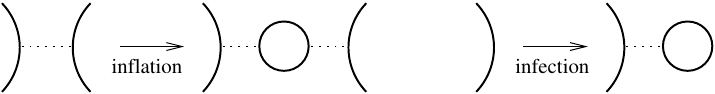}
\end{center}

For any integer $n\ge 1$, denote by $\mathcal{E}^-(n)$ the set of all (connected) chord diagrams on $n$ circles which are obtained from the two chord diagrams $\dessin{0.65cm}{D21_2}$ and $\dessin{0.65cm}{D32_2}$ by iterated inflations, in all possible ways. 
Denote also by $\mathcal{P}(n)$ the set of all diagrams obtained from an element of $\mathcal{E}^-(n-1)$ by a single infection. 
Our second main result reads as follows.
\begin{thm}[Thm.~\ref{c_n}] \label{thm2}
Let $n\ge 2$. For any framed oriented $n$-component link $L$, we have 
 $$c_{n+1}(L) = \sum_{D \in \mathcal{E}^-(n) \cup \mathcal{P}(n)} C_L[D],$$
where $C_L[D]$ denotes the coefficient of $D$ in the (framed) Kontsevich integral of $L$.
\end{thm}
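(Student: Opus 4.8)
The plan is to prove the identity by induction on the number of components $n$, using the Conway skein relation to organise the recursion on the left-hand side, the recursive inflation/infection description of $\mathcal{E}^-(n)\cup\mathcal{P}(n)$ to organise it on the right-hand side, and the factorization formulas of Section~\ref{sec:facto} as the technical engine that translates one recursion into the other. Write $\phi_n(L):=\sum_{D\in\mathcal{E}^-(n)\cup\mathcal{P}(n)}C_L[D]$; the goal is $\phi_n=c_{n+1}$ on all framed oriented $n$-component links.

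First I would collect the structural inputs. For an $n$-component link the Conway polynomial has lowest term $z^{n-1}$, with $c_{n-1}$ given by the classical ``determinant of linking numbers'' formula, and (by the parity $\nabla_L(-z)=(-1)^{n-1}\nabla_L(z)$) the next coefficient that can be nonzero is $c_{n+1}$. Since $c_{n+1}$ is a Vassiliev invariant of degree at most $n+1$, it depends only on the part of the Kontsevich integral of degree $\le n+1$; likewise for $\phi_n$, since every diagram in $\mathcal{E}^-(n)\cup\mathcal{P}(n)$ has exactly $n+1$ chords. Finally, $c_{n+1}$ obeys the Conway skein relation $\nabla_{L_+}-\nabla_{L_-}=z\,\nabla_{L_0}$: an inter-component crossing change modifies $c_{n+1}(L)$ by $c_n(L_0)$ for the $(n-1)$-component oriented smoothing $L_0$, while a self-crossing change modifies it by $c_n(L_0)$ for the $(n+1)$-component smoothing $L_0$ --- and this last quantity is again a determinant of linking numbers. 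Together with the vanishing of $c_{n+1}$ on the $n$-component unlink and the fact that finitely many crossing changes take any link to the unlink, these data determine $c_{n+1}$.

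The base of the induction is the knot identity $c_2(K)=C_K[\dessin{0.65cm}{D21_2}]$, classical once one notes that the framing-dependent degree-$2$ diagrams cannot contribute to the framing-independent $c_2$ and that $\mathcal{P}(1)=\emptyset$; the case $n=2$, where $\phi_2$ is matched by a direct enumeration with the known expression for $c_3$ of a $2$-component link (equivalently with the invariant $\mu_2$ of Theorem~\ref{thm1}, up to the elementary terms recorded in the introduction), is a useful checkpoint. For the inductive step, assuming the statement for $(n-1)$ components, it suffices to show that $\phi_n$ has the same variations as $c_{n+1}$ under the two types of crossing change (the vanishing on the unlink being immediate). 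A crossing change inserts a single chord between the two strands involved, so one must prove that $\sum_{D\in\mathcal{E}^-(n)\cup\mathcal{P}(n)}\bigl(C_{L_+}[D]-C_{L_-}[D]\bigr)$ equals $\phi_{n-1}(L_0)$ for an inter-component crossing and a determinant of linking numbers for a self-crossing. This is precisely what \emph{inflation} and \emph{infection} are designed to secure: merging the two circles along the inserted chord should carry $\mathcal{E}^-(n)\cup\mathcal{P}(n)$ bijectively onto $\mathcal{E}^-(n-1)\cup\mathcal{P}(n-1)$ (and self-merging should reproduce the determinant), compatibly with the $4T$ and framing relations; the factorization formulas of Section~\ref{sec:facto} are what turn these local statements about diagrams into the required identities between sums of Kontsevich-integral coefficients.

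I expect the main obstacle to be exactly this last compatibility. The coefficient $C_L[D]$ is well defined only after fixing representatives in the space of chord diagrams on $n$ circles, and the crossing-change variation of the Kontsevich integral is not carried by a single diagram but spread over a combination; matching that combination term-by-term with $\mathcal{E}^-(n-1)\cup\mathcal{P}(n-1)$ requires showing that no spurious diagram survives, that none is lost, and that all the $4T$/framing/connectedness bookkeeping closes up --- which is where the factorization input of Section~\ref{sec:facto} and the precise shapes of the generators and of the two moves really enter. A secondary difficulty is the explicit base computation at $n=2$. (One could instead try to bypass the recursion altogether by identifying the degree-$(n+1)$ weight system of $c_{n+1}$ outright, proving via the factorization formulas that it equals $1$ on $\mathcal{E}^-(n)\cup\mathcal{P}(n)$ and $0$ on a complementary system of representatives; this trades the induction for one weight-system computation but still rests on the same factorization results.)
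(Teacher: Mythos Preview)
Your overall strategy---induction on $n$ via the skein relation at a mixed crossing, reducing $\phi_n(L_+)-\phi_n(L_-)$ to $\phi_{n-1}(L_0)$ by smoothing the inserted chord, and invoking vanishing on split links---is exactly the paper's approach. Two points need correction, however.

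First, your base case is misstated: the knot identity is $C_K\big[\dessin{0.5cm}{D21_2}\big]=c_2(K)-\tfrac{1}{24}$, not $c_2(K)$ (Proposition~\ref{c_2}); the $\tfrac{1}{24}$ comes from $\nu$, not from any framing-dependent diagram. This is why the paper takes $n=2$ (Proposition~\ref{c_3}) as the genuine base case, and why that proof must be done by hand: the diagram $\dessin{0.5cm}{D32_2}\in\mathcal{E}^-(2)$ has three mixed chords, so the Skein Lemma~\ref{Obvious lemma} produces an extra $\tfrac{1}{24}$ term which exactly cancels the deficit in the knot formula. For $n\ge 3$ no essential diagram carries $\ge 3$ mixed chords between a fixed pair of circles (Proposition~\ref{Inflations et Essentiels}), so this subtlety disappears and the clean induction runs. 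Your write-up misses this $\tfrac{1}{24}$ bookkeeping entirely.

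Second, the self-crossing analysis is unnecessary and would add real work: mixed crossing changes alone suffice to reach a geometrically split link, where both sides vanish. The paper uses only mixed crossings. Also, the key combinatorial input is not the factorization formulas per se but Lemma~\ref{unpeuutilequandmeme} (smoothing a mixed chord of a $\ominus$-essential diagram stays $\ominus$-essential) together with Remark~\ref{rem:lissage} for the $\mathcal{P}(n)$ part; the factorization results enter only to certify that $\sum_{D\in\mathcal{P}(n)}C_L[D]$ is an invariant.
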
 

Let us describe the simplest case $n=2$ more precisely. 
We have $\mathcal{E}^-(2)=\{\dessin{0.5cm}{D32_4};\dessin{0.5cm}{D32_4b};\dessin{0.5cm}{D32_2}\}$ and $\mathcal{P}(2)=\{\dessin{0.5cm}{D32_7};\dessin{0.5cm}{D32_7b}\}$.\footnote{We use the graphical convention that the circles are ordered from left to right.}
If $L$ is a framed oriented $2$-component link, then Theorem \ref{thm2} says that  $c_3(L)$ is given by 
$$C_L\Big[ \dessin{0.5cm}{D32_4} \Big] + C_L\Big[ \dessin{0.5cm}{D32_4b} \Big] + C_L\Big[ \dessin{0.5cm}{D32_2} \Big]+ C_L\Big[ \dessin{0.5cm}{D32_7} \Big]+ C_L\Big[ \dessin{0.5cm}{D32_7b} \Big]. $$

Figure \ref{fig:ex} gives typical examples of chord diagrams that are involved in the statement for higher values of $n$. 
\begin{figure}[!h]
 \includegraphics[scale=1]{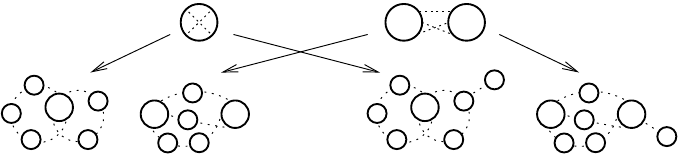}
 \caption{Two examples of elements in $\mathcal{E}^-(6)$ (left) and $\mathcal{P}(7)$ (right).}\label{fig:ex}
\end{figure}

The case $n=1$ is somewhat particular, as it involves a correction term. 
We have $\mathcal{E}^-(1)=\{\dessin{0.5cm}{D21_2}\}$ and $\mathcal{P}(1)=\emptyset$, and for a knot $K$ we have  
$$ c_2(K) = C_K\Big[\dessin{0.5cm}{D21_2}\Big] + \frac{1}{24},$$
a formula which is well-known to the experts (see Proposition \ref{c_2}). 

We stress that Theorem \ref{thm2} is of course related to the weight system of the Conway polynomial, 
computed in \cite{MMR} for solving the Melvin-Morton-Rozansky conjecture. 
Our statement and proof are however completely independent from \cite{MMR}. 
\medskip 

The paper is organized as follows. 
In Section 2, we review the various invariants of links and $3$-manifolds alluded to in the title of the paper. 
In Section 3, we identify certain combinations of coefficients in the framed Kontsevich integral in terms of classical invariants;  in particular, our factorization results are given in Section 3.2, while Theorem 2 is proved in Section 3.4. 
Section 4 is devoted to the invariants $\mu_n$; an explicit formula in terms of Conway coefficients and the linking matrix is given in Section 4.2. 
Finally, we prove Theorem 1 in Section 5. 

\begin{acknowledgments}
The authors would like to thank Christine Lescop for discussions regarding the relationship between Theorem \ref{thm1} and \cite{Lescop}. 
\end{acknowledgments}

\section{Preliminaries}

In this section we recall the necessary material for this paper. 
We start by a set a conventions that will be used throughout.

\subsection{Conventions and Notation}\label{sec:conv}

All $3$ manifolds will be assumed to be closed, compact, connected and oriented. 
All links live in the $3$-sphere $S^3$, and are assumed to be framed, oriented and ordered. 
\medskip 

Let $L=K_1\cup \cdots \cup K_n$ be an $n$-component link. 
Given a subset $I$ of $\{ 1, \ldots, n \}$, we set
$$ L_I := \bigcup_{i\in I} K_i\quad \textrm{ and }\quad  L_{\check{I}}:= L\setminus L_I.$$
\noindent We abbreviate $L_{\check{i}}=L_{\check{\{ i \}}}$. 

We denote by $l_{i,j}$ the linking number of the $i$th and $j$th component, and we denote by $fr_i=fr(K_i)$ the framing of the $i$th component. 

The linking matrix $\mathbb{L} \in M_n(\mathbb{Z})$ of $L$ is given by $\mathbb{L}_{i, i} = fr(K_i)$ and $\mathbb{L}_{i, j} = l_{i, j}$ if $i\neq j$. 
We denote by $\sigma_+(L)$, resp. $\sigma_-(L)$, the number of positive, resp. negative, eigenvalues of $\mathbb{L}$, 
so that its signature is given by $\sigma(L) = \sigma_+(L) - \sigma_-(L)$. 

\subsection{Conway polynomial and the $U_n$ invariant}

The \emph{Conway polynomial} is a renormalization of the Alexander polynomial, introduced by J.~Conway in the late $60$s. 
This is an invariant of (unframed) oriented links, which is a polynomial $\nabla$ in the variable $z$, defined by setting 
 $\nabla_U(z)=1$, 
where $U$ denotes the unknot, and 
 $$ \nabla_{L_+}(z) - \nabla_{L_-}(z) = z\nabla_{L_0}(z), $$
where $L_+$, $L_-$ and $L_0$ are three links that are identical except in a $3$-ball where they look as follows: 
\begin{center}
\begin{tikzpicture}
	\draw[->] [very thick] (0,0) -- (1,1);
	\draw[<-] [very thick] (0,1) -- (0.4,0.6);
	\draw (0.6,0.4) [very thick] -- (1,0);
	\draw (0.5,0) node[below]{$L_+$};
	
	\draw [very thick] (2,0) -- (2.4, 0.4);
	\draw[->] [very thick] (2.6,0.6) -- (3,1);
	\draw[<-] [very thick] (2,1) -- (3,0);
	\draw (2.5,0) node[below]{$L_-$};
	
	\draw[<-] [very thick] (4,1) to [bend left] (4,0);
	\draw[<-] [very thick] (4.5,1) to [bend right] (4.5,0);
	\draw (4.25,0) node[below]{$L_0$};
\end{tikzpicture}
\end{center}
We say that the three oriented links $(L_+,L_-,L_0)$ form a \emph{skein triple}, and a formula of the type above is typically called a skein formula.  

Denote by $c_k$ the coefficient of $z^k$ in the Conway polynomial. This is a $\mathbb{Z}$-valued link invariant, which satisfies the skein formula 
$c_{k+1}(L_+) - c_{k+1}(L_-) = c_k(L_0)$. 

For a knot $K$ we have $c_0(K)=1$, and for a $2$-component link $L=K_1\cup K_2$ we have $c_1(L)=l_{1,2}$. In general, 
the Conway polynomial of an $n$-component link $L$ has the form 
  $$\nabla_L(z)= \sum_{k=0}^N c_{n+2k-1}(L) z^{n+2k-1}. $$
\medskip

We can define the following link invariant from the Conway coefficients $c_k$.
\begin{definition}\label{def:U_n}
Let $n\ge 2$ be an integer.
We define an invariant $U_{n}$ of oriented $(n-1)$-component links by setting 
$$U_2(K) = c_2(K) - \frac{1}{24} $$
for a knot $K$, and the recursive formula  
$$U_{n+1}(L) = c_{n+1}(L) - \sum_{i = 1}^{n} U_{n}(L_{\check{i}}) \sum_{j \neq i} l_{i, j} $$
for an $n$-component link $L$ ($n\ge 3$). 
\end{definition}
For example, 
$$ U_3(K_1\cup K_2) = c_3(K_1\cup K_2) - l_{1,2}\left(c_2(K_1) + c_2(K_2) - \frac{1}{12}\right).  $$ 
\subsection{The Casson-Walker-Lescop invariant} \label{sec:casson}
The following is due to A.~Casson. 
\begin{theorem}[Casson] \label{casson}
There exists a unique $\mathbb{Z}$-valued invariant of integral homology spheres 
such that 
\begin{enumerate}
\item[(i).] $\lambda(S^3)=0$.  
\item[(ii).]  For any integral homology sphere $M$, any knot $K$ in $M$, and any $n\in \mathbb{Z}$, if $M_{K_n}$ denotes the result of $\frac{1}{n}$-Dehn surgery on $M$ along $K$, then:
 $$ \lambda(M_{K_{n+1}})-\lambda(M_{K_n})=c_2(K). $$
\end{enumerate}
Moreover, 
\begin{enumerate}
\item[(iii).] $\lambda$ changes sign under orientation reversal, and is additive under connected sum.
\item[(iv).]  The mod 2 reduction of $\lambda$ coincides with the Rochlin invariant.
\end{enumerate}
\end{theorem}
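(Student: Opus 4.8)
The statement to prove is Casson's theorem characterizing the Casson invariant $\lambda$ by the surgery formula and its uniqueness. This is a classical result whose proof I would organize as follows.

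\medskip

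The plan is to follow Casson's original construction as presented by Akbulut--McCarthy \cite{Akbulut-McCarthy}. For \emph{existence}, I would define $\lambda(M)$ directly via the intersection theory of $SU(2)$ representation varieties: fix a Heegaard splitting $M = W_1 \cup_\Sigma W_2$ of genus $g$, and inside the representation variety $\mathcal{R}(\Sigma) = \mathrm{Hom}(\pi_1\Sigma, SU(2))/SU(2)$ (a singular space that is smooth of dimension $6g-6$ away from reducibles) consider the two subvarieties $\mathcal{R}(W_i)$ of representations extending over each handlebody. After perturbing to achieve transversality in the smooth (irreducible) locus, one counts the intersection points with signs, and $\lambda(M)$ is defined as half this signed count (with an orientation convention). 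The bulk of the existence proof is then checking this is independent of the Heegaard splitting (stabilization invariance) and of the perturbation; this is exactly the technical heart of \cite{Akbulut-McCarthy} and I would cite it rather than reprove it. Properties (i) and the connected-sum half of (iii) are then immediate from the definition ($\mathcal{R}(S^3)$ contains only the trivial representation; $\pi_1$ of a connected sum is a free product).

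\medskip

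For the \emph{surgery formula} (ii), I would present $M_{K_n}$ and $M_{K_{n+1}}$ by Heegaard splittings that agree outside a neighborhood of $K$, so that the difference $\lambda(M_{K_{n+1}}) - \lambda(M_{K_n})$ is computed by a local intersection count supported near $K$. One shows this local count equals a signed count of irreducible $SU(2)$ representations of $\pi_1(M \setminus K)$ with a prescribed (meridian) boundary condition, and that — after a further identification with the equivariant signature / spectral flow picture — this count is a symmetrized sum over the Alexander polynomial data of $K$, which evaluates to $c_2(K) = \tfrac12\Delta_K''(1)$ (equivalently the $z^2$-coefficient of the Conway polynomial). The sign-change under orientation reversal in (iii) follows because reversing orientation reverses the sign of the intersection form on the representation variety while fixing the count's absolute value; combined with (i) and (ii) this propagates to all integral homology spheres since any such $M$ is obtained from $S^3$ by a sequence of $\pm 1$-surgeries on knots.

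\medskip

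\emph{Uniqueness} is the easy part: given any invariant $\lambda'$ satisfying (i) and (ii), one proves $\lambda' = \lambda$ by induction on the number of $\pm 1$-surgery moves needed to produce $M$ from $S^3$ (such a sequence exists by Kirby calculus, since every integral homology sphere is surgery on $S^3$ along a framed link with $\pm 1$-framed unknotted-after-handle-slides components, or more directly by a theorem of González-Acuña / Fickle). Property (ii) forces the value of $\lambda'$ at each step from the previous one, and (i) anchors the induction at $S^3$; hence $\lambda'$ is determined. Finally (iv), the identification of the mod $2$ reduction with the Rochlin invariant $\mu$, follows by checking that $\mu$ also satisfies a surgery formula in which the jump is $c_2(K) \bmod 2$ (this is Rochlin's / the Arf-invariant surgery formula) and that $\mu(S^3)=0$; then the uniqueness argument just established gives $\lambda \equiv \mu \pmod 2$.

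\medskip

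The main obstacle is the existence half, specifically the transversality/perturbation theory needed to make sense of the intersection number when the representation varieties are genuinely singular along the reducible locus, together with the independence-of-Heegaard-splitting argument; this is where all the real work lies, and in a paper such as the present one it is appropriate to invoke \cite{Akbulut-McCarthy, AM} for the full details rather than reproduce them. The verification that the surgery jump equals precisely $c_2(K)$ — rather than merely \emph{some} knot invariant additive under the relevant operations — is the second delicate point, and is handled by a direct computation relating the representation-variety count to the Alexander polynomial.
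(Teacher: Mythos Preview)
The paper does not prove this theorem. It is stated in Section~\ref{sec:casson} as a classical background result, attributed to Casson with references to \cite{Akbulut-McCarthy,AM}, followed only by the one-sentence remark that Casson established existence by counting conjugacy classes of irreducible $SU(2)$--representations of $\pi_1(M)$. There is nothing further to compare against.

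Your sketch is a reasonable outline of the classical argument as found in Akbulut--McCarthy, and it is appropriate in spirit to what the paper invokes. A couple of minor cautions on the content itself: in the uniqueness paragraph, the claim that every integral homology sphere is obtained from $S^3$ by a sequence of $\pm 1$-surgeries on knots deserves a precise citation (this is sometimes attributed to a combination of Kirby calculus and the fact that the linking matrix can be diagonalized to $\pm 1$'s over $\mathbb{Z}$ for an integral homology sphere); and in (iv), the statement that the Rochlin invariant satisfies the same mod $2$ surgery formula with jump $c_2(K)\bmod 2$ is exactly the content of Rokhlin's theorem relating Arf$(K)$ to $c_2(K)\bmod 2$, which you should name explicitly. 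But none of this is at odds with the paper, which simply takes Theorem~\ref{casson} as given.
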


This is the \emph{Casson invariant} of integral homology spheres.  
Its existence was established by A.~Casson, who defined it in terms of count of conjugacy classes of irreducible $SU(2)$--representations of $\pi_1(M)$.  

In \cite{Walker1992}, K.~Walker extended the Casson invariant to a $\mathbb{Q}$-valued invariant of rational homology spheres $\lambda_W$, 
via a surgery formula.  
C.~Lescop then widely generalized the Casson-Walker invariant to all closed $3$-manifolds, 
by establishing  a global surgery formula involving the multivariable Alexander polynomial \cite{Lescop}. 
We denote by $\lambda_L$ this \emph{Casson-Walker-Lescop invariant}. Our convention is that, for a rational homology sphere $M$, we have 
$\lambda_L(M) = \frac{1}{2}\vert H_1(M)\vert \lambda_W(M)$.

\subsection{Universal invariants}

We now review the Kontsevich and LMO invariants, providing only the ingredients that are necessary for our purpose. 

\subsubsection{Chord diagrams and Jacobi diagrams}

Let us begin with introducing the spaces of diagrams in which the Kontsevich integral and LMO invariant take their values. 
We stress that our terminologies are somewhat different from the usual conventions of the literature: this is clarified in Remark \ref{rem:conv}. 

\begin{definition}
Let $X$ be some oriented $1$-manifold.
A \emph{chord diagram} $D$ on $X$ is a collection of copies of the unit interval, such that the set of all endpoints is embedded into $X$.
We call \emph{chord} any of these copies of the interval, and we call \emph{leg} any endpoint of a chord in $D$; 
the $1$-manifold $X$ is called the \emph{skeleton} of $D$.\\
A chord is called \emph{mixed}, resp. \emph{internal}, if its two legs lie on distinct, resp. the same, component(s) of the skeleton. \\
The  \emph{degree} of $D$ is defined as $deg(D) = |\{ \text{chords of\ } D \}| = \frac{1}{2} |\{ \text{legs of\ } D \}|$.
\end{definition}

\begin{definition}
We denote by $\mathcal{A}(X)$ the $\mathbb{Q}$-vector space generated by all chord diagrams on $X$, modulo the \emph{4T relation}: 
 $$ \dessin{1cm}{4T}. $$
\end{definition}
\begin{remark}\label{rem:central}
As a consequence of the $4T$ relation, an \emph{isolated chord} (i.e.  a chord whose endpoints are met consecutively on the skeleton) 
commutes with any other chord, in the sense that we have the following relation in $\mathcal{A}(X)$: 
 $$ \dessin{0.9cm}{central}. $$
\end{remark} 

In what follows, we will almost exclusively be interested in \emph{chord diagrams on $n$ circles}, i.e. in the case where $X=\circlearrowleft^n$
consists of $n$ ordered, oriented copies of $S^1$. 
\medskip

\begin{definition}
A \emph{Jacobi diagram} is a trivalent graph whose trivalent vertices are equipped with a cyclic order on the incident edges.
The \emph{degree} of a Jacobi diagram is half its number of vertices. 
\end{definition}

\begin{definition}
We denote by $\mathcal{A}(\emptyset)$ the $\mathbb{Q}$-vector space generated by all Jacobi diagrams, modulo the \emph{AS and IHX relations}: 
 $$ \dessin{1.3cm}{relations}. $$
\end{definition}

\begin{notation}
For an element $x\in \mathcal{A}(\emptyset)$, and an integer $k\ge 0$, we denote by $(x)_{k}$, resp. $(x)_{\le k}$, its projection to the degree $k$ part $\mathcal{A}_{k}(\emptyset)$, resp. the degree $\le k$ part $\mathcal{A}_{\le k}(\emptyset)$.  
\end{notation}

\begin{remark}\label{rem:conv}
In the literature, the term \lq Jacobi diagram \rq\, more generally refers to unitrivalent diagrams whose univalent vertices lie disjointly on a (possible empty) $1$-manifold, 
subject to AS, IHX and an extra STU relation. 
Hence what we call \lq Jacobi diagrams\rq\, here are what experts know as \lq Jacobi diagrams on the empty set\rq, 
or \lq purely trivalent Jacobi diagrams\rq\,  -- this justifies our notation $\mathcal{A}(\emptyset)$. 
\end{remark}

We make use of the usual drawing conventions for chord and Jacobi diagrams: 
bold lines represent skeleton components while chords and graphs are drawn with dashed lines, 
and trivalent vertices are equipped with the counterclockwise ordering. Also, we assume when drawing elements of $\mathcal{A}(\circlearrowleft^n)$, 
that the circles are oriented counterclockwise and are ordered from left to right, unless otherwise specified.

\subsubsection{The Kontsevich integral}\label{sec:Kontsevich}

Let us give a quick overview of the Kontsevitch integral. 
We do not follow here Kontsevich's original definition \cite{Kontsevich1993}, but rather the combinatorial definition later provided in \cite{LM}.
Moreover, we will only give explicitly the low degree terms in the definitions, since these are all we need for the purpose of this paper. 
We refer the reader to \cite[\S 6.4]{Ohtsuki} for a detailed review.

Recall that a q-tangle is an oriented tangle, equipped with a consistent collection of parentheses on each of its linearly ordered sets of boundary points. 
A q-tangle can be (non-uniquely) decomposed into copies of the following  elementary q-tangles $I$, $X_{\pm}$, $C_{\pm}$ and $\Lambda_{\pm}$ (and those obtained by orientation-reversal on any component):\\[-0.1cm]
\begin{center}
\includegraphics[scale=0.8]{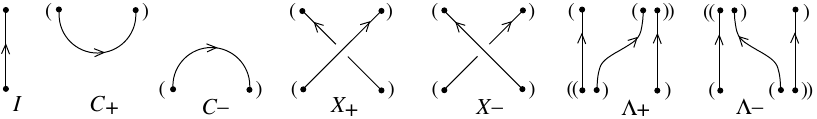}
\end{center}

The \emph{(framed) Kontsevich integral} can be determined by specifying its values on these elementary q-tangles.  

We set $Z(I)=\uparrow$, the diagram without chord of $\mathcal{A}(\uparrow)$, and 
$Z(C_\pm)=\sqrt{\nu}$,
where $\nu\in \mathcal{A}(\circlearrowleft)$ is the Kontsevich integral of the $0$-framed unknot $U_0$, computed in \cite{Bar-Natan-Garoufalidis-Rozansky-Thurston}:
\begin{equation}\label{eq:nu}
\begin{tikzpicture}
	\draw (0.5,0.5) node{$\nu =$};
	\draw [very thick] (1,0.5) arc(180:540:0.5);
	\draw[<-] [very thick] (1.5,1) -- (1.51,1);
	\draw (2.5,0.5) node{$+ \frac{1}{24}$};
	\draw [very thick] (3,0.5) arc(180:540:0.5);
	\draw[<-] [very thick] (3.5,1) -- (3.51,1);
	\draw [densely dashed] (3.1,0.3) -- (3.9,0.3);
	\draw [densely dashed] (3.1,0.7) -- (3.9,0.7);
	\draw (4.5,0.5) node{$- \frac{1}{24}$};
	\draw [very thick] (5,0.5) arc(180:540:0.5);
	\draw[<-] [very thick] (5.5,1) -- (5.51,1);
	\draw [densely dashed] (5.1,0.7) -- (5.9,0.3);
	\draw [densely dashed] (5.1,0.3) -- (5.9,0.7);
	\draw (7.2,0.5) node{$+$ degree $> 2$.};
\end{tikzpicture}
\end{equation}
Next we set
 $Z(X_\pm)=\textrm{exp}\big( \frac{\pm 1}{2} \dessin{0.7cm}{X1} \big) = \sum_{k\ge 0} \frac{(\pm 1)^k}{2^k k!} \dessin{0.7cm}{X1}^{k}$, 
where the $k^{th}$ power denotes $k$ parallel dashed chords:
\begin{equation}\label{Z2}
 Z(X_\pm)=\dessin{0.85cm}{X0} \pm\frac{1}{2} \dessin{0.85cm}{X1} + \frac{1}{8} \dessin{0.85cm}{X2} \pm \frac{1}{48} \dessin{0.85cm}{X3} + \textrm{degree $>3$}. 
\end{equation}
Finally, set $Z(\Lambda_\pm)=\Phi^{\pm 1}$, where $\Phi\in \mathcal{A}( \uparrow\uparrow\uparrow )$ is the choice of a \emph{Drinfeld associator} 
(see e.g. \cite[App. D]{Ohtsuki}). At low degree, this gives 
 $$ Z(\Lambda_\pm)= \dessin{0.85cm}{phi0} \pm\frac{1}{24} \left( \dessin{0.85cm}{phi1} - \dessin{0.85cm}{phi2}\right) + \textrm{degree $>3$}. $$

\begin{ex} \label{Calculs Kontsevich}
The following are well-known low degree computations for $Z(U_\pm)$, where $U_\pm$ denotes the $(\pm 1)$-framed unknot. 
\begin{center}
\begin{tikzpicture}
	\draw (0,0.5) node{$\hat{Z}(U_\pm) =$};
	\draw [very thick] (1,0.5) arc(180:540:0.5);
	\draw[<-] [very thick] (1.5,1) -- (1.51,1);
	\draw (2.5,0.5) node{$\pm \frac{1}{2}$};
	\draw [very thick] (3,0.5) arc(180:540:0.5);
	\draw[<-] [very thick] (3.5,1) -- (3.51,1);
	\draw [densely dashed] (3,0.5) -- (4,0.5);
	\draw (4.5,0.5) node{$+ \frac{1}{6}$};
	\draw [very thick] (5,0.5) arc(180:540:0.5);
	\draw[<-] [very thick] (5.5,1) -- (5.51,1);
	\draw [densely dashed] (5.1,0.7) -- (5.9,0.7);
	\draw [densely dashed] (5.1,0.3) -- (5.9,0.3);
	\draw (6.5,0.5) node{$- \frac{1}{24}$};
	\draw [very thick] (7,0.5) arc (180:540:0.5);
	\draw[<-] [very thick] (7.5,1) -- (7.51,1);
	\draw [densely dashed] (7.1,0.7) -- (7.9,0.3);
	\draw [densely dashed] (7.1,0.3) -- (7.9,0.7);
	\draw (9,0.5) node{  $\,\,\,\,\,+$ degree $> 2$.};
\end{tikzpicture}
\end{center}
\end{ex}

\subsubsection{The degree $\le 1$ part of the LMO invariant}\label{sec:LMO}

We now review the LMO invariant of closed oriented $3$-manifolds. 
Starting with an integral surgery presentation, this invariant is extracted from a renormalization $\check{Z}$ of the Kontsevich integral of this link via a family of sophisticated diagrammatic operations $\iota_n$. 
For the purpose of this paper, however, we only need the degree $\le 1$ part of the LMO invariant, 
and in particular we only need (a somewhat simplified definition of) the map $\iota_1$. 
We refer the reader to \cite{Le-Murakami-Murakami-Ohtsuki,Ohtsuki} for a complete definition.

\begin{definition}
Let $L$ be a framed oriented $n$-component link. 
We set $\check{Z}(L) := \hat{Z}(L) \# \nu^{\otimes n}$. In other words, in $\check{Z}$ we add a copy of $\nu$ to each circle component in $\hat{Z}(L)\in \mathcal{A}(\circlearrowleft^n)$.
\end{definition}

\begin{definition}
Given  a chord diagram $D$ on $n$ circles, we associate an element of $\mathcal{A}(\emptyset)$ as follows.  
For each circle component $c$ of $D$, if the number of legs on $c$ is $k$,  
\begin{itemize} 
 \item if $2\le k\le 4$, then replace $c$ by the portion of Jacobi diagram $T_k$, where 
  \[ T_2 = \dessin{0.05cm}{strut},\quad T_3=\frac{1}{2}\dessin{0.9cm}{Y},\quad T_4 = \frac{1}{6}\dessin{0.9cm}{H} + \frac{1}{6} \dessin{0.9cm}{I}. \]
 \item if $k\le 1$ or $k\ge 5$, then map $D$ to $0$. 
\end{itemize}
Next, replace each copy of $\dessin{0.5cm}{D00}$ resulting from these replacements by a coefficient $(-2)$. 
The result is the desired element of $\mathcal{A}(\emptyset) $, which we denote by $\iota_1(D)$. 

By linearity this defines a map
\[ \iota_1: \mathcal{A}(\circlearrowleft^n)\longrightarrow \mathcal{A}(\emptyset).  \]
\end{definition}
\medskip 

Now, let $M$ be a closed $3$--manifold, and let $L$ be a framed $n$-component link in $S^3$ such that $M$ is obtained by surgery along $L$.  
Fix an orientation for the link $L$.
\begin{definition}
The degree $\le 1$ part of the \emph{LMO invariant} of $M$ is defined by 
 $$ Z^{LMO}_1(M):=  \left( \frac{\iota_1(\check{Z}(L))}{(\iota_1(\check{Z}(U_{+}))^{\sigma_+(L)} (\iota_1(\check{Z}(U_{-}))^{\sigma_-(L)} }\right)_{\le 1}\in \mathcal{A}_{\le 1}(\emptyset). $$
\end{definition}
This is an invariant of the $3$-manifold $M$: it does not depend on the choice of orientation of $L$, and does not change under Kirby moves.  

The denominator in the above formula is easily computed, see \cite{Ohtsuki}: 
\begin{equation}\label{Denominateur LMO}
 \left( \iota_1(\check{Z}(U_+))^{- \sigma_+(L)} \iota_1(\check{Z}(U_-))^{- \sigma_-(L)} \right)_{\le 1}= (-1)^{\sigma_+(L)} +
  \frac{(-1)^{\sigma_+(L)} \sigma(L)}{16} \dessin{0.8cm}{T}. 
\end{equation}
Moreover, the degree $0$ and $1$ parts of $Z_1^{LMO}(M)$ are clearly identified.
\begin{theorem}[\cite{Le-Murakami-Murakami-Ohtsuki,Beliakova-Habegger}]\label{LMOCasson}
Let $M$ be a closed $3$ manifold. We have 
$$Z_1^{LMO}(M)=l_0+l_1\dessin{0.8cm}{T}$$
where\\[-0.5cm] 
\begin{itemize}
	\item $l_0=\left\{
	\begin{array}{cl}
	 |H_1(M)|& \textrm{ if $M$ is a rational homology sphere,}\\
	 0 & \textrm{ otherwise.}
	\end{array}
	\right.$
	\item $l_1 = \frac{(-1)^{\beta_1(M)}}{2} \lambda_L(M)$, where $\beta_1$ denotes the first Betti number.
\end{itemize}
\end{theorem}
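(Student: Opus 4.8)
The plan is to establish Theorem~\ref{LMOCasson} by directly comparing $Z_1^{LMO}(M)$, as defined through the $\iota_1$-procedure, with the known characterizations of $|H_1(M)|$ and of $\lambda_L$. The key structural input is that the Casson-Walker-Lescop invariant is itself characterized by a surgery formula (via Walker's and Lescop's work), and that both $l_0$ and $l_1$ are invariants of $M$ (the latter being guaranteed by the fact that $Z^{LMO}_1$ is a well-defined $3$-manifold invariant, stated just before the theorem). So it suffices to check that $l_0$ and $l_1$ transform correctly under surgery — or, more efficiently, to identify $l_0$ and $l_1$ explicitly for a sufficiently rich family of surgery presentations and invoke uniqueness.

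\textbf{Step 1: Degree-zero part.} First I would compute $l_0$. Since $\iota_1$ multiplies by a factor $(-2)$ for each isolated chord and the degree-$0$ part of $\check Z(L)$ comes only from the framing contributions $Z(X_\pm)$ and the $\nu^{\otimes n}$ corrections, the degree-$0$ part of $\iota_1(\check Z(L))$ is a determinant-type expansion. Concretely, collecting the degree-$0$ contributions of all chord configurations that close up into a disjoint union of struts $T_2$, one recognizes the expansion of $\det \mathbb{L}$ (up to sign), where each strut on a single circle contributes the diagonal entry $fr_i$ and each mixed strut between circles $i,j$ contributes $l_{i,j}$, all weighted by $(-2)$ per strut but paired against the $(-2)^{-1}$ hidden in the normalization of $T_2$. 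Dividing by the degree-$0$ part of the denominator $(-1)^{\sigma_+(L)}$ from \eqref{Denominateur LMO}, and using that $|\det\mathbb{L}|=|H_1(M)|$ when $\mathbb{L}$ is nonsingular (and $\det\mathbb{L}=0$ otherwise, matching $\beta_1(M)>0$), gives $l_0$ as claimed. The sign bookkeeping — that the signs conspire to give $|H_1(M)|$ rather than $(-1)^{?}\det\mathbb{L}$ — is the place to be careful.

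\textbf{Step 2: Degree-one part.} Next I would isolate $l_1$, the coefficient of $\dessin{0.8cm}{T}$. Here one must combine: the degree-$1$ contributions of $\iota_1$ applied to $\check Z(L)$, which involve chord diagrams producing exactly one $\theta$-graph $\dessin{0.8cm}{T}$ under the $T_k$ replacements (so configurations with total degree giving six legs forming a theta, arising either from $T_4\cdot T_2\cdot(\text{struts})$ patterns, from two $T_3$'s joined, or from theta-graphs already present in $\nu$ or $\Phi$); the cross-term between the degree-$1$ part of $\iota_1(\check Z(L))$ and the degree-$0$ part of the inverse denominator; and the degree-$0$ part of $\iota_1(\check Z(L))$ times the degree-$1$ part of the inverse denominator from \eqref{Denominateur LMO}, i.e. the $\frac{(-1)^{\sigma_+(L)}\sigma(L)}{16}\dessin{0.8cm}{T}$ correction. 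Organizing this, $l_1$ becomes a universal combination of Conway-type link invariants and linking-matrix data; one then checks it satisfies the Casson surgery axiom (ii) of Theorem~\ref{casson} for $\pm1/(n)$-surgery on a knot in $S^3$ (where $\check Z$ and $\iota_1$ are most tractable), namely that the change under a twist equals $c_2(K)$ up to the factor $\tfrac{(-1)^{\beta_1}}{2}$. By Walker's and Lescop's uniqueness, matching this surgery behavior together with additivity and the known value on $S^3$ pins down $l_1 = \tfrac{(-1)^{\beta_1(M)}}{2}\lambda_L(M)$.

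\textbf{Main obstacle.} The hard part will be Step~2: correctly enumerating \emph{all} chord-diagram configurations in $\check Z(L)$ (including contributions buried in $\sqrt{\nu}$ on each component and in the associator $\Phi$) whose image under $\iota_1$ contains the $\theta$-graph, and tracking every numerical coefficient and orientation sign through the $T_k$ replacements and the $(-2)$-substitution. In practice I expect this is handled not by brute force but by reducing, via Kirby moves and the invariance of $Z_1^{LMO}$, to checking the surgery formula on the simplest presentations — knots with $\pm1$ framing — so that the full combinatorial expansion never has to be written out in the general case; the general formula then follows from the surgery characterization of $\lambda_L$. An alternative route, avoiding even this, is simply to cite that both sides are $3$-manifold invariants whose values on lens spaces and on $\pm1$-surgeries determine them, reducing the theorem to two or three explicit low-complexity computations together with the Casson/Walker/Lescop uniqueness statements already recalled in Section~\ref{sec:casson}.
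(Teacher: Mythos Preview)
The paper does not prove Theorem~\ref{LMOCasson}: it is stated with attribution to \cite{Le-Murakami-Murakami-Ohtsuki,Beliakova-Habegger} and used as a black box. There is therefore no proof in the paper to compare your proposal against. In fact the logical flow of the paper runs opposite to your outline: Theorem~\ref{LMOCasson} is taken as a known input, and the paper then computes $\big(\iota_1(\check Z(L))\big)_0$ and $\big(\iota_1(\check Z(L))\big)_1$ explicitly (Lemmas~\ref{eq1} and~\ref{eq2}) in order to \emph{derive} the surgery formula for $\lambda_L$ (Theorem~\ref{Thm general}) from the assumed identification $l_1=\tfrac{(-1)^{\beta_1}}{2}\lambda_L$.

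It is worth noting that your Step~1 is essentially the content of Lemma~\ref{eq1}, which the paper does prove; combined with~\eqref{Denominateur LMO} and the identity $n=\sigma_+(L)+\sigma_-(L)+\beta_1(S^3_L)$, that lemma does yield $l_0=(-1)^{\sigma_-(L)+\beta_1}\det\mathbb{L}=|H_1(M)|$ or $0$ as claimed. But the paper never packages this as a proof of the degree-$0$ part of Theorem~\ref{LMOCasson}; it simply uses Lemma~\ref{eq1} downstream. Your Step~2, by contrast, has no counterpart in the paper at all: the identification of $l_1$ with the Casson--Walker--Lescop invariant is precisely what the paper imports from the cited references, and your suggestion to recover it by checking the Casson surgery axiom on $\pm 1$-framed knots and invoking Walker--Lescop uniqueness is a plausible sketch of the strategy in those references rather than anything done here.
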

The second point of Theorem \ref{LMOCasson} is the key result in establishing our surgery formula for the Casson-Walker-Lescop invariant.  

\begin{remark}\label{rem:iota}
Our definition of the $\iota_1$ map differs from the usual one in that we map to zero all diagrams of degree $\ge 5$. This modification is harmless since, with the original definition of $\iota_1$, such diagrams cannot contribute to the degree $\le 1$ part of the LMO invariant.
\end{remark}

\section{Coefficients of the Kontsevich integral}

In this section we identify certain combinations of coefficients in the framed Kontsevich integral in terms of classical invariants. 

\subsection{Operations on Jacobi diagrams}

\subsubsection{Preliminaries}

We begin by introducing some notations and tools that will be used throughout the rest of the paper. 

\begin{notation} \label{nota:coeff}
Let $S$ be an element of $\mathcal{A}(\circlearrowleft^n)$. Let $D$ be a chord diagram on $n$ circles.
We denote by $C[D](S)$ the coefficient of $D$ in $S$.
In particular, we set 
$$C_L[D] := C[D](\hat{Z}(L)),$$
for a framed oriented link $L$, and we denote by $C[D]$ the assignment $L\mapsto C_L[D]$. 
\end{notation}
Of course, these quantities are in general not well-defined, since elements of $\mathcal{A}(\circlearrowleft^n)$ are subject to the $4T$ relation. 
However, taking appropriate combinations of such coefficients may yield well-defined link invariant: this is recalled in the first of the next three rather simple and well-known lemmas, whose proofs are omitted (proofs can be found in \cite{Casejuane}). 

\begin{lemma}[Invariance] \label{Critere Invariance}
Let $D_1, \ldots, D_k$ be chord diagrams on $n$ circles. 
Then  $X := C[D_1] + \ldots + C[D_k]$ defines an invariant of framed oriented $n$-component links if and only if $X$ vanishes on any linear combination of chord diagrams arising from a $4T$ relation. 
\end{lemma}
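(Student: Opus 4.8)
The plan is to prove Lemma \ref{Critere Invariance} by reducing invariance of the coefficient functional $X = C[D_1]+\cdots+C[D_k]$ to the well-known invariance properties of the Kontsevich integral itself. Recall that $\hat{Z}(L)$, the framed Kontsevich integral, is a complete invariant data in the sense that it is well-defined in $\mathcal{A}(\circlearrowleft^n)$ up to the $4T$ relation: more precisely, the construction via $q$-tangles and a Drinfeld associator produces, for each diagram of $L$, an element of the free vector space on chord diagrams, and different diagrams of the same link (or different associator choices) yield the \emph{same} element only after passing to the quotient by $4T$. So the first step is to make this precise: let $\mathcal{D}(\circlearrowleft^n)$ denote the free $\mathbb{Q}$-vector space on chord diagrams on $n$ circles, so that $\mathcal{A}(\circlearrowleft^n) = \mathcal{D}(\circlearrowleft^n)/(4T)$, and observe that a linear functional on $\mathcal{D}(\circlearrowleft^n)$ of the form $\sum_i C[D_i]$ (evaluation of coefficients) descends to a functional on $\mathcal{A}(\circlearrowleft^n)$ precisely when it annihilates the $4T$ subspace.

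The forward implication is then immediate: if $X$ defines a link invariant, then in particular it is well-defined on $\mathcal{A}(\circlearrowleft^n)$-valued data, hence must vanish on $4T$ combinations, since those are zero in $\mathcal{A}(\circlearrowleft^n)$ and one can realize (pieces of) $4T$ relations inside genuine link diagrams — or, more cleanly, one argues contrapositively: if $X$ does not annihilate some $4T$ element, then $X$ is not well-defined as a functional on $\mathcal{A}(\circlearrowleft^n)$, so two presentations of the same link giving $\hat{Z}$-representatives differing by a $4T$ element would produce different values of $X$, contradicting invariance. For the converse: suppose $X$ vanishes on every $4T$ combination. Then $X$ factors through the quotient map $\mathcal{D}(\circlearrowleft^n)\to\mathcal{A}(\circlearrowleft^n)$, i.e. there is a well-defined linear functional $\bar{X}$ on $\mathcal{A}(\circlearrowleft^n)$ with $X = \bar{X}\circ(\text{quotient})$. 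Since $\hat{Z}(L)\in\mathcal{A}(\circlearrowleft^n)$ is a genuine invariant of the framed oriented link $L$, the composite $L\mapsto \bar{X}(\hat{Z}(L)) = \sum_i C_L[D_i] = X(L)$ is an invariant of framed oriented $n$-component links.

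The one point requiring a little care — and the main (mild) obstacle — is the direction of the equivalence that asserts that \emph{every} $4T$ combination can be "seen" by the invariance requirement, i.e. that failure to annihilate $4T$ genuinely obstructs invariance rather than being invisible. The clean way around this is precisely the factorization argument above: invariance of $X$ as a function $L\mapsto X(L)$ is equivalent to $X$ being well-defined on the image of the universal invariant $\hat{Z}$, and since $\hat{Z}$ takes values in $\mathcal{A}(\circlearrowleft^n) = \mathcal{D}(\circlearrowleft^n)/(4T)$ and is known to surject onto (a spanning set of) the relevant graded pieces — or at least contains in its image representatives that detect each nonzero class — one concludes. In practice the authors will simply cite that $\hat{Z}$ is $\mathcal{A}(\circlearrowleft^n)$-valued and well-defined, and that a coefficient functional on the free space descends to the quotient iff it kills the relations; the lemma is then essentially a restatement of this. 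I would write the proof in three short paragraphs along exactly these lines, emphasizing the quotient-factorization viewpoint and citing \cite{Casejuane} or \cite{LM, Ohtsuki} for the well-definedness of $\hat{Z}$ modulo $4T$.
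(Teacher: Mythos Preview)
The paper omits the proof of this lemma entirely, referring the reader to \cite{Casejuane}, so there is no argument in the paper to compare yours against. Your factorization approach is the natural one and is correct for the ``if'' direction, which is the only direction actually used in the paper: if $X$ annihilates every $4T$ combination, it descends to a linear functional $\bar X$ on $\mathcal{A}(\circlearrowleft^n)$, and $L\mapsto \bar X(\hat Z(L))$ is then a link invariant because $\hat Z$ is.

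For the ``only if'' direction you correctly sense a subtlety but do not fully close it. Saying that ``$X$ defines a link invariant'' means, a priori, only that $X$ takes the same value on any two $\mathcal{D}$-level representatives of $\hat Z(L)$ arising from different decompositions of the same link $L$; it does not immediately say that $X$ annihilates the \emph{entire} $4T$ subspace of $\mathcal{D}(\circlearrowleft^n)$. Your contrapositive argument assumes that any nonzero $4T$ element can be realized as the difference of two such representatives, which is an extra input. There are two clean ways out: either (a) interpret ``defines an invariant'' as ``descends to a well-defined functional on $\mathcal{A}(\circlearrowleft^n)$'' (which is almost certainly the intended reading here, in light of Notation~\ref{nota:coeff}, and makes the equivalence a tautology about quotient spaces), or (b) invoke the universality of the Kontsevich integral for weight systems---every degree-$d$ weight system is realized by a Vassiliev invariant via $\hat Z$---to conclude that a functional giving a link invariant must in fact be a weight system, hence vanish on $4T$. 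Either remark would make your ``only if'' paragraph rigorous; as written it is a sketch rather than a proof.
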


\begin{lemma}[Disjoint Union] \label{Factorisation}
Let $D$ be a chord diagram on $n$ circles that splits into a disjoint union $D=D_I\sqcup D_J$.
Then for any framed oriented $n$-component link $L$ we have 
$$C_L[D] = C_{L_I}[D_I] \times C_{L_J}[D_J], $$
where $L_I$ and $L_J$ are sublinks of $L$ corresponding to the components of $D_I$ and $D_J$. 
\end{lemma}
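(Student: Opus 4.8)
Let $\{1,\dots,n\}=I\sqcup J$ be the partition for which no chord of $D$ joins an $I$-circle to a $J$-circle, so that $D=D_I\sqcup D_J$ with $D_I$ a chord diagram on the $I$-circles and $D_J$ one on the $J$-circles. Call a chord diagram on $\circlearrowleft^n$ \emph{$IJ$-mixed} if some chord of it has one leg on an $I$-circle and one on a $J$-circle. The plan is to introduce the linear map
$$p_{I,J}\colon \mathcal{A}(\circlearrowleft^n)\longrightarrow \mathcal{A}(\circlearrowleft^I)\otimes\mathcal{A}(\circlearrowleft^J)$$
sending an $IJ$-mixed chord diagram to $0$ and any other chord diagram $D'$ to $D'_I\otimes D'_J$, and then to prove that
$$p_{I,J}\bigl(\hat{Z}(L)\bigr)=\hat{Z}(L_I)\otimes\hat{Z}(L_J).$$
Granting this, the lemma follows by extracting the coefficient of $D_I\otimes D_J=p_{I,J}(D)$ on both sides.

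The first point to settle is that $p_{I,J}$ is well defined. A $4T$ relation is supported on three local strand-segments; going through the cases according to which of these segments lie on $I$- versus $J$-circles, one checks that either all four terms of the relation are $IJ$-mixed (hence all sent to $0$), or the three segments lie on circles of a single block, in which case the relation is an honest $4T$ relation inside $\mathcal{A}(\circlearrowleft^I)$ (resp.\ $\mathcal{A}(\circlearrowleft^J)$) tensored with a fixed diagram on the remaining circles. Consequently $\mathcal{A}(\circlearrowleft^n)=V\oplus W$, where $W$ is spanned by the $IJ$-mixed diagrams and $V\cong\mathcal{A}(\circlearrowleft^I)\otimes\mathcal{A}(\circlearrowleft^J)$ by the remaining ones, and $p_{I,J}$ is the associated projection; in particular $C_L[D]$ is the coefficient of $D_I\otimes D_J$ in $p_{I,J}(\hat{Z}(L))$.

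For the key identity, recall that $\hat{Z}$ is multiplicative under split union, $\hat{Z}(L_I\sqcup L_J)=\hat{Z}(L_I)\otimes\hat{Z}(L_J)$ (see e.g.\ \cite{Ohtsuki}), and that the right-hand side already lies in $V$. It therefore suffices to show that $p_{I,J}(\hat{Z}(L))$ is unchanged when one of the two sublinks is pushed across the other, since $L$ can be turned into the split link $L_I\sqcup L_J$ by finitely many crossing changes, each of them between an $I$-component and a $J$-component, without altering $L_I$ or $L_J$. If $L$ and $L'$ differ by one such crossing change, present them by the same elementary data except at that crossing, where one reads the value $\exp(+\frac{1}{2}c)$ and the other $\exp(-\frac{1}{2}c)$, with $c$ the chord joining the two strands there (cf.\ the formula for $Z(X_\pm)$); then each chord diagram occurring in $\hat{Z}(L)-\hat{Z}(L')$ contains the chord $c$, which is $IJ$-mixed, so $\hat{Z}(L)-\hat{Z}(L')\in W$ and $p_{I,J}(\hat{Z}(L))=p_{I,J}(\hat{Z}(L'))$. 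Iterating and comparing coefficients of $D_I\otimes D_J$ yields $C_L[D]=C_{L_I}[D_I]\cdot C_{L_J}[D_J]$.

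The only genuine content is this last step: the slogan that ``annihilating the $IJ$-mixed chords in $\hat{Z}(L)$ amounts to splitting $L_I$ from $L_J$'', made precise through the crossing-change computation, together with the modest bookkeeping needed to see that $p_{I,J}$ is well defined on $\mathcal{A}(\circlearrowleft^n)$ and agrees with the convention used to define the coefficients $C_L[\,\cdot\,]$. The remainder is routine, and the statement is in any case essentially classical; a detailed proof can be found in \cite{Casejuane}.
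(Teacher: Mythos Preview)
The paper does not prove this lemma: it is listed among ``three rather simple and well-known lemmas, whose proofs are omitted (proofs can be found in \cite{Casejuane})'', so there is nothing here to compare your argument against. Your proof is correct and follows the expected route: the verification that $p_{I,J}$ descends modulo $4T$ (each $4T$ relator being either entirely $IJ$-mixed or supported on a single block), the crossing-change step showing that $p_{I,J}(\hat Z(L))$ is unchanged when an $I$-strand is pushed across a $J$-strand, and the multiplicativity of $\hat Z$ under split union together yield the claim.
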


\begin{lemma}[Skein] \label{Obvious lemma}
Let $D$ be a chord diagram of degree at most $4$.
Let $L_+$ and $L_-$ be the first two terms of a skein triple at a crossing $c$ between the $i$th and $j$th components (possibly $i=j$).
Then $C_{L_+}[D] - C_{L_-}[D]$ is given by 
$$ 
\left\{ \begin{array}{ll}
C[D] \Big(\dessin{0.9cm}{X1}\Big) & \text{ if $D$ has $\le 2$ chords between components $i$ and $j$,}\\
C[D] \Big(\dessin{0.9cm}{X1} + \frac{1}{24}\dessin{0.9cm}{X3}\Big) & \text{ if $D$ has $\ge 3$ chords between components $i$ and $j$,}
\end{array}\right.
$$
where we only show the local contribution to $\hat{Z}(L)$ given by the crossing $c$. 
\end{lemma}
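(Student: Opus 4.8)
\textbf{Proof plan for Lemma \ref{Obvious lemma}.}

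The strategy is to express the difference $\hat{Z}(L_+) - \hat{Z}(L_-)$ locally using the elementary values of the Kontsevich integral, and then extract the coefficient of $D$. First I would write the local tangle at the crossing $c$ as $X_+$ (resp.\ $X_-$) inside a fixed q-tangle decomposition of $L_\pm$, so that $\hat{Z}(L_\pm) = \cdots Z(X_\pm) \cdots$, where the parts of the decomposition away from $c$ are identical. Using \eqref{Z2}, we have $Z(X_+) - Z(X_-) = \dessin{0.85cm}{X1} + \tfrac{1}{24}\dessin{0.85cm}{X3} + \textrm{degree} > 3$, since the even-degree terms $\dessin{0.85cm}{X0}, \tfrac18\dessin{0.85cm}{X2}, \ldots$ cancel while the odd-degree ones double; note $\dessin{0.85cm}{X1}$ has degree $1$ and $\tfrac{1}{24}\dessin{0.85cm}{X3}$ has degree $3$, so all other surviving terms have degree $\ge 5$.

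Next I would substitute this difference back into the ambient expression. Since $\hat{Z}$ is computed by stacking/tensoring the elementary pieces, $\hat{Z}(L_+) - \hat{Z}(L_-)$ equals the same ambient expression with $Z(X_\pm)$ replaced by their difference; the resulting element of $\mathcal{A}(\circlearrowleft^n)$ is a sum of chord diagrams, each of which contains either exactly one or exactly three additional chords joining the $i$th and $j$th components, localized at $c$. Now extract the coefficient of $D$. If $D$ has at most $2$ chords between $i$ and $j$, then only the degree $1$ term $\dessin{0.85cm}{X1}$ can produce $D$ (the degree $3$ term would force at least $3$ such chords, and the $\ge 5$ terms cannot contribute since $\deg D \le 4$), giving $C_{L_+}[D] - C_{L_-}[D] = C[D]\big(\dessin{0.9cm}{X1}\big)$, where the notation means the coefficient extracted from the modified Kontsevich integral with the displayed local insertion. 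If $D$ has $\ge 3$ chords between $i$ and $j$, then both the degree $1$ and degree $3$ insertions can yield $D$ — the degree $1$ insertion contributes one of the three $i$--$j$ chords with the other two coming from elsewhere in the integral, while the degree $3$ insertion contributes all three at $c$ — whence the second case of the formula.

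The degree bound $\deg D \le 4$ is used precisely to kill the $\textrm{degree} > 3$ (hence $\ge 5$) terms of $Z(X_+) - Z(X_-)$: any such term would insert at least five chords between $i$ and $j$, exceeding the total degree of $D$. The main subtlety to handle carefully is the bookkeeping of \emph{which} chord diagrams in $\hat{Z}(L_\pm)$ reduce to $D$ after the cancellation — in particular, making precise that ``the coefficient of $D$ in $\hat{Z}(L_\pm)$ restricted to diagrams with a prescribed local configuration at $c$'' is well-defined and coincides with the $C[D](\cdot)$ appearing on the right-hand side. This is essentially a matter of organizing the sum over q-tangle decompositions according to the local picture at $c$, and follows from the multiplicativity of $\hat{Z}$ under the tangle operations together with the fact that $D$ has only finitely many chords. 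I expect this organizational step, rather than any individual computation, to be the only real point requiring care.
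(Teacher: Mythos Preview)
Your approach is correct and is exactly the expected one. The paper itself omits the proof of this lemma (it is listed among ``three rather simple and well-known lemmas, whose proofs are omitted''), so there is no alternative argument to compare against; your computation via $Z(X_+)-Z(X_-) = \dessin{0.85cm}{X1} + \tfrac{1}{24}\dessin{0.85cm}{X3} + (\textrm{degree}\ge 5)$ together with the degree bound on $D$ is precisely the intended route.

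One very minor expository point: in the second case you write that ``the degree $3$ insertion contributes all three at $c$'', which tacitly assumes $D$ has exactly three $i$--$j$ chords. Since $\deg D\le 4$ allows for four such chords, you might phrase it as ``three of the $i$--$j$ chords''. This does not affect the argument, since the notation $C[D]\big(\dessin{0.9cm}{X1} + \tfrac{1}{24}\dessin{0.9cm}{X3}\big)$ already accounts for both possibilities.
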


\subsubsection{Inflations and Infections}

We now introduce several local operations on chord diagrams. The first one is a standard one: 

\begin{definition}
Let $D$ be a chord diagram on $n$ circles, with at least one chord, as shown on the left-hand side of the figure below. 
A \emph{smoothing} of $D$ along this chord is a chord diagram $D_0$ obtained from the following operation:
\begin{center}
\begin{tikzpicture}
	\draw[<-] [very thick] (0,1) -- (1,0);
	\draw[->] [very thick] (0,0) -- (1,1);
	\draw (-0.2,0.5) node{$D$};
	\draw [densely dashed] (0.25,0.75) -- (0.75,0.75);
	\draw[->] (1.25,0.5) -- (2.25,0.5);
	\draw[<-] [very thick] (2.5,1) to[bend left] (2.5,0);
	\draw[<-] [very thick] (3,1) to[bend right] (3,0);
	\draw (3.4,0.5) node{$D_0$};
\end{tikzpicture}
\end{center}
\end{definition}
\noindent
Figure \ref{fig:lissage} gives two examples of smoothings (along the chord marked with a $\ast$). 
\begin{figure}[!h]
\begin{center}
 \includegraphics[scale=0.8]{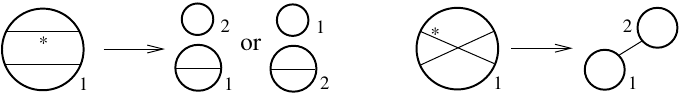} 
 \caption{Smoothing chord diagrams on a circle.}\label{fig:lissage}
\end{center}
\end{figure}

\noindent If the chord lies on two disjoint components $i$ and $j$ ($i<j$), then these two circles become a single component of $D_0$, labeled by $i$, and the circles $k>j$ are re-labeled by $(k-1)$.  
Otherwise, as Figure \ref{fig:lissage} illustrates, the skeleton of $D_0$ has $(n+1)$ components, and the $(n+1)$th component is one of the two circles arising from the smoothing.   

The next two operations, called inflation and infection, will provide recursive tools for building chord diagrams with useful properties, in any degree. 
\begin{definition}
Let $D$ be a chord diagram on $n$ circles, and let $c$ be a chord of $D$.
The \emph{inflation of $D$ along $c$} is the following local operation: \\[-0.2cm] 
\begin{center}
\begin{tikzpicture}
	\draw [very thick] (0,0) arc(270:450:0.5);
	\draw[<-] [very thick] (0,1) -- (0.01,1);
	\draw (0,0) node[left]{$i$};
	\draw [densely dashed] (0.5,0.5) -- (1.5,0.5);
	\draw (1,0.5) node[below]{$c$};
	\draw [very thick] (2,0) arc(270:90:0.5);
	\draw[->] [very thick] (1.99,0) -- (2,0);
	\draw (2,0) node[right]{$j$};
	\draw[->] (2.25,0.5) -- (3.25,0.5);
	\draw [very thick] (3.5,0) arc(270:450:0.5);
	\draw[<-] [very thick] (3.5,1) -- (3.51,1);
	\draw (3.5,0) node[left]{$i$};
	\draw [densely dashed] (4,0.5) -- (4.5,0.5);
	\draw [very thick] (4.5,0.5) arc(180:540:0.5);
	\draw (5,0) node[below]{$n+1$};
	\draw[<-] [very thick] (5,1) -- (5.01,1);
	\draw [densely dashed] (5.5,0.5) -- (6,0.5);
	\draw [very thick] (6.5,0) arc(270:90:0.5);
	\draw[->] [very thick] (6.49,0) -- (6.5,0);
	\draw (6.5,0) node[right]{$j$};
\end{tikzpicture}
\end{center}
\end{definition}

\begin{definition}
Let $D$ be a chord diagram on $n$ circles, and let $I$ be an interval in the skeleton, whose interior is disjoint from all chords. 
The \emph{infection of $D$ along $I$} is the following local operation:\\[-0.2cm]  
\begin{center}
\begin{tikzpicture}
	\draw [very thick] (0,0) arc(270:450:0.5);
	\draw[<-] [very thick] (0,1) -- (0.01,1);
	\draw (0,0) node[left]{$i$};
	\draw (0.5,0.5) node[right]{$I$};
	\draw[->] (1.25,0.5) -- (2.25,0.5);
	\draw [very thick] (2.5,0) arc(270:450:0.5);
	\draw[<-] [very thick] (2.5,1) -- (2.51,1);
	\draw (2.5,0) node[left]{$i$};
	\draw [densely dashed] (3,0.5) -- (4,0.5);
	\draw [very thick] (4,0.5) arc(180:540:0.5);
	\draw[<-] [very thick] (4.5,1) -- (4.51,1);
	\draw (4.8,0) node[right]{$n+1$};
\end{tikzpicture}
\end{center}
We call \emph{infection along the $i$th component} of $D$ the result of an infection along any of the intervals bounded by the legs on the $i$th circle component.
\end{definition}

\begin{remark}\label{rem:lissage}
Smoothing the chord that appears in an infection on a chord diagram $D$ gives back $D$. 
Likewise, after some inflation on $D$, smoothing either of both chords attached to the $(n+1)$th circle, gives back $D$.
\end{remark}

\subsubsection{Essential diagrams}\label{sec:essentiels}

We now introduce several families of chord diagrams. 
We start with a general definition. 
\begin{definition}\label{def:close}
Let $D$ be a chord diagram, and let $D'$ be an element of $\mathcal{A}(\emptyset)$. 
We say that $D$ \emph{closes into $D'$} when $\left(\iota_1(D)\right)_{\le 1} = D'$.
\end{definition}

We first consider diagrams that close into a nonzero constant. 
By the definition of $\iota_1$, a \emph{connected} diagram closes into the empty diagram with nonzero coefficient if, 
and only if each circle component has exactly two legs; such diagrams will be called \lq chain of circles\rq\, in the rest of this paper:  
\begin{definition}\label{rem:chain}
A \emph{chain of $n$ circles} is a degree $n$ chord diagram obtained by $(n-1)$ successive inflations on the diagram $\dessin{0.6cm}{D11}$, 
up to permutation of the circle labels. 
\end{definition}
For example, chains of $1$, $2$ and $3$ circles are of the form 
$\dessin{0.5cm}{D11}$, $\dessin{0.5cm}{D22}$ and $\dessin{0.6cm}{D33}$, respectively. 
It is immediately verified that any chain of circles closes into $-2$. 
\medskip 

We now consider diagrams that close into the Theta-shaped diagram $\dessin{0.5cm}{T}$. 
\begin{definition}
A \emph{connected} chord diagram is called  
\begin{itemize}
\item a \emph{$\oplus$-essential diagram} if it closes into  $\dessin{0.5cm}{T}$ with positive coefficient,
\item a \emph{$\ominus$-essential diagram} if it closes into  $\dessin{0.5cm}{T}$ with negative coefficient, 
\item an \emph{essential diagram} if it either a $\oplus$-essential or $\ominus$-essential diagram.
\end{itemize}
We denote respectively by $\mathcal{E}^+(n)$ and $\mathcal{E}^-(n)$, the set of $\oplus$-essential and $\ominus$-essential diagrams on $n$ circles. 
We also set $\mathcal{E}(n):=\mathcal{E}^+(n)\cup \mathcal{E}^-(n)$.
\end{definition}

Before further investigating these families of diagrams, let us give low-degree examples. 
\begin{ex}\label{ex:essentiels}
All $\oplus$-essential diagrams on $\le 2$ circles are given by 
$$ \mathcal{E}^+(1) = \{  \dessin{0.5cm}{D21_1} \} \quad \textrm{and} \quad \mathcal{E}^+(2) = \{ \dessin{0.5cm}{D32_1} \, ; \, \dessin{0.5cm}{D32_3}  \, ; \, \dessin{0.5cm}{D32_3b} \},  $$
and they all close into $\frac{1}{6}\times \dessin{0.5cm}{T}$. 
\\
All $\ominus$-essential diagrams on $\le 2$ circles close into $-\frac{1}{3}\times \dessin{0.5cm}{T}$ and are given by 
$$ \mathcal{E}^-(1) = \{ \dessin{0.5cm}{D21_2} \} \quad \textrm{and} \quad \mathcal{E}^-(2) = \{ \dessin{0.5cm}{D32_2}\,;\,\dessin{0.5cm}{D32_4}  \, ; \, \dessin{0.5cm}{D32_4b}\}. $$
\end{ex}

More generally, the following combinatorial criterion can easily be deduced from the definition of the map $\iota_1$. 
\begin{lemma} \label{Conditions combinatoires}
Let $D$ be a chord diagram. Then $D$ is essential if, and only if it is of the one of the following two types: 
      \begin{itemize}
	\item  $D$ contains one circle with $4$ legs, and all other circles have $2$ legs, 
	\item  $D$ contains two circles with $3$ legs, and all other circles have $2$ legs. 
     \end{itemize}
It follows that an essential diagram on $n$ circles has always degree $n+1$. 
\end{lemma}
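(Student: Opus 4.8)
The plan is to read off both implications directly from the definition of the map $\iota_1$, using two elementary bookkeeping arguments: a count of trivalent vertices to control the degree of $\iota_1(D)$, and a count of legs to recover the degree of $D$.

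First I would record what $\iota_1$ does to the degree. Let $D$ be a connected chord diagram. If some circle of $D$ carries $0$, $1$ or $\ge 5$ legs then $\iota_1(D)=0$, so $D$ cannot be essential; hence we may assume every circle carries $k_i\in\{2,3,4\}$ legs. Under this assumption, replacing the $i$-th circle by $T_{k_i}$ introduces $0$, $1$, resp. $2$ trivalent vertices according to whether $k_i=2,3,4$ (both monomials $H$ and $I$ occurring in $T_4$ have exactly two vertices), gluing the chords adds edges but no vertices, and the final $(-2)$-substitution only removes leg-free circle components. Writing $a$ (resp. $b$) for the number of circles of $D$ with exactly $3$ (resp. $4$) legs, this shows that $\iota_1(D)$ is homogeneous of degree $\tfrac12(a+2b)$ in $\mathcal{A}(\emptyset)$ — except when $a=b=0$, in which case $D$ is a chain of circles (Definition~\ref{rem:chain}) and $\iota_1(D)=-2$ is a nonzero constant. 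For the forward implication, suppose $(\iota_1(D))_{\le 1}=c\,\dessin{0.5cm}{T}$ with $c\neq 0$: then $a=b=0$ is excluded (it gives the constant $-2$, not a multiple of $\dessin{0.5cm}{T}$) and $\tfrac12(a+2b)\ge 2$ is excluded (it gives $(\iota_1(D))_{\le 1}=0$), so $a+2b=2$, whose only solutions with $a,b\ge 0$ are $(a,b)=(0,1)$ and $(a,b)=(2,0)$ — precisely the two types claimed. For the converse, one checks that a chord diagram of either shape is connected with $\iota_1(D)$ of degree $1$, hence a rational multiple of $\dessin{0.5cm}{T}$ (the unique nonzero connected Jacobi diagram of that degree, by AS), and that this multiple is nonzero; this is carried out by inspecting the finitely many local patterns produced by $\iota_1$ on a $3$- or $4$-legged circle, in the spirit of Example~\ref{ex:essentiels}. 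Finally the degree statement is a leg count: $2\deg(D)=\sum_i k_i=2(n-a-b)+3a+4b=2n+(a+2b)=2n+2$, so $\deg(D)=n+1$.

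The step I expect to be the main obstacle is the non-vanishing of the coefficient $c$ in the converse direction. A priori a connected degree-$1$ Jacobi diagram can vanish in $\mathcal{A}(\emptyset)$ by the AS relation — for instance if the reconnection degenerates to a self-loop or to a "dumbbell" shape — so one genuinely has to verify that the way $\iota_1$ reglues the legs of a $3$- or $4$-legged circle produces the theta diagram $\dessin{0.5cm}{T}$ with a nonzero coefficient rather than an AS-trivial graph. All the remaining steps are routine counting.
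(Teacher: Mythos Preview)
Your vertex-counting argument for the forward implication (essential $\Rightarrow$ one of the two shapes) and the leg count for the degree are exactly right, and this is what the paper has in mind when it says the criterion is ``easily deduced from the definition of the map~$\iota_1$''.

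The converse, however --- which you correctly flag as the main obstacle --- is in fact \emph{false} as literally stated, so no amount of local inspection will establish it. Take the connected chord diagram $D$ on two circles in which each circle carries one internal chord and the two circles are joined by a single mixed chord. Both circles have exactly $3$ legs, so $D$ is of your type $(a,b)=(2,0)$. Applying $\iota_1$, each circle is replaced by $\tfrac{1}{2}Y$; the internal chord on each circle glues two legs of the corresponding $Y$ into a self-loop at that trivalent vertex, and the mixed chord becomes the edge joining the two vertices. The result is $\tfrac{1}{4}$ times the dumbbell graph, which vanishes in $\mathcal{A}(\emptyset)$ by AS --- precisely the degeneration you yourself warned about. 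Hence $D$ is not essential, despite having the required leg distribution. This is consistent with Example~\ref{ex:essentiels}: there are seven connected degree-$3$ diagrams on two ordered circles with leg distribution $(4,2)$, $(2,4)$ or $(3,3)$, but $\vert\mathcal{E}(2)\vert=6$; the missing diagram is exactly this one.

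This does not damage anything downstream: only the forward implication is ever invoked (in the proof of Proposition~\ref{Inflations et Essentiels}), and the ``if and only if'' in the statement should be read as a mild imprecision. Your proof of that direction, together with the degree formula, is complete and matches the intended argument.
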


We now relate essential diagrams to the inflation operation. 
\begin{proposition} \label{Inflations et Essentiels}
Inflation on a $\oplus$-essential (resp. $\ominus$-essential) diagram of degree $n$ yields a $\oplus$-essential (resp. $\ominus$-essential) diagram of degree $n+1$, for all $n\ge 1$.\\
Conversely, for $n\ge 3$, any $\oplus$-essential (resp. $\ominus$-essential) diagram of degree $n+1$ is the inflation of a $\oplus$-essential (resp. $\ominus$-essential) diagram of degree $n$, up to permutation of the circle labels. 
\end{proposition}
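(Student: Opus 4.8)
The plan is to analyze inflation through the combinatorial criterion of Lemma~\ref{Conditions combinatoires}, which characterizes essential diagrams by their leg-count profile (one circle with $4$ legs, rest with $2$; or two circles with $3$ legs, rest with $2$). The first implication is the easier half: I would check that an inflation along a chord $c$ of $D$ changes the leg profile in a controlled way. Inflation replaces a single chord $c$ (contributing one leg to each of the circles $i$ and $j$ it meets, or two legs to circle $i$ if $c$ is internal) by two chords plus a new circle carrying exactly $2$ legs; crucially, the legs on the pre-existing circles are preserved in number, since each endpoint of $c$ is simply relocated onto a new arc but stays on the same circle. Hence if $D$ has the profile of an essential diagram, so does its inflation, and the new circle contributes the required $2$-legged circle; moreover connectedness is clearly preserved (the new circle is attached to $i$ and $j$ via the two new chords, and $D$ was connected). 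By Remark~\ref{rem:lissage}, smoothing one of the two new chords recovers $D$, so the ``closes into $\dessin{0.5cm}{T}$'' conclusion with the \emph{same sign} follows either directly from tracking $\iota_1$, or by invoking that $\iota_1$ of the inflated diagram factors through $\iota_1(D)$ times the $(-2)$ coming from the new $2$-legged circle — the sign of the coefficient of $\dessin{0.5cm}{T}$ is therefore unchanged (both $\oplus$ and $\ominus$ essential classes are preserved, consistent with Example~\ref{ex:essentiels} where the coefficients $\tfrac16$ and $-\tfrac13$ do not depend on $n$).

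For the converse, suppose $D'$ is an essential diagram of degree $n+1$ on $n$ circles with $n\ge 3$. By Lemma~\ref{Conditions combinatoires}, $D'$ has at least $n-1\ge 2$ circles carrying exactly two legs. The goal is to find one such $2$-legged circle, say labeled $n+1$ after relabeling, that is the ``new circle'' of some inflation — i.e., its two chords go to (at most) two \emph{distinct} other circles and, after deleting this circle and merging its two chords back into one, we recover an essential diagram $D$ of degree $n$. The key point is that not every $2$-legged circle need be removable in this way: its two chords might lead to three distinct circles (impossible with only $2$ legs — each chord has one other endpoint, so at most $2$ further circles are touched), or the two chords might form a configuration whose ``de-inflation'' is ill-defined, or removing it might disconnect the diagram. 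I would argue that since $D'$ is connected and has $\ge 2$ two-legged circles, at least one of them can be chosen so that its removal-and-merge keeps the diagram connected (a standard leaf/cycle argument on the ``circle adjacency graph''), and then verify that the merged diagram $D$ still satisfies the leg-count criterion: deleting a $2$-legged circle and splicing its two chords into one removes exactly the two legs on that circle and leaves all other leg-counts untouched, so the $4$-legged or two-$3$-legged structure survives. The hypothesis $n\ge 3$ is what guarantees $\ge 2$ two-legged circles exist (for $n=2$, a $\ominus$-essential diagram like $\dessin{0.5cm}{D32_2}$ need not have any removable $2$-legged circle, explaining the restriction).

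\textbf{Main obstacle.} The delicate step is the converse, specifically ensuring the chosen $2$-legged circle is \emph{genuinely an inflation target}: one must rule out that both chords of that circle have their other endpoints on the \emph{same} companion circle in a way that, after merging, produces a diagram that is not of the inflation form, and one must handle the possibility that every $2$-legged circle sits ``in series'' so that removing any single one is fine but the bookkeeping of labels and the internal/mixed status of the merged chord needs care. I expect to resolve this by a short case analysis on how the (up to) two chords of the selected circle are distributed among the remaining circles — mixed-mixed to two circles $i<j$ (merge to a chord between $i$ and $j$), mixed-mixed to a single circle $i$ (merge to an internal chord on $i$), or a self-configuration — combined with a connectivity argument choosing the circle to avoid disconnection, after which the leg-count invariance and Remark~\ref{rem:lissage} finish the proof. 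The relabeling of circles (the ``up to permutation'' clause) is then just the observation that inflation as defined always creates the new circle with the top label $n+1$, whereas an arbitrary essential diagram may carry that circle with any label.
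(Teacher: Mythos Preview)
Your overall strategy matches the paper's, but you introduce complications that are not needed and make one genuine slip.

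\textbf{First half.} Your leg-profile argument is fine, but the claim that $\iota_1$ of the inflated diagram equals $\iota_1(D)$ \emph{times $(-2)$} is wrong. The new circle carries exactly two legs, so under $\iota_1$ it is replaced by $T_2$, a bare strut with coefficient $1$; this simply splices the two new chords back into the original chord $c$, yielding $\iota_1(D')=\iota_1(D)$ on the nose. (The factor $(-2)$ only appears when a replacement produces a closed dashed loop $\dessin{0.5cm}{D00}$, which does not happen here since the two legs sit on chords going to other circles.) This is exactly the paper's argument, and it is what actually gives sign preservation; your ``times $(-2)$'' would flip the sign.

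\textbf{Converse.} The paper's argument is much shorter than your sketch, because the obstacles you anticipate do not arise. Once $n\ge 3$ there is at least one circle $C$ with exactly two legs (in fact at least $n-2\ge 1$, not $n-1$ as you write: in the two-$3$-legged case only $n-2$ circles have two legs). Connectedness of $D'$ immediately forces these two legs to be endpoints of two \emph{distinct mixed} chords: if they were the two ends of a single internal chord, $C$ would be a separate component. But ``a circle with exactly two legs, each on a mixed chord'' is literally the output of an inflation, so $D'$ is an inflation of the diagram $D$ obtained by suppressing $C$ and merging the two chords into one. There is no need for a leaf/cycle argument or a careful choice of $C$: suppressing a degree-$2$ vertex in a connected multigraph always preserves connectivity, and the leg counts on all remaining circles are unchanged, so $D$ satisfies the criterion of Lemma~\ref{Conditions combinatoires}. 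The sign is inherited by the same $T_2$ computation as in the first half. Your case analysis on where the two chords land (two distinct circles vs.\ the same circle) is harmless but unnecessary: both cases are instances of inflation (along a mixed chord or along an internal chord, respectively).
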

\begin{proof}[Proof]
The first part of the statement is rather easily verified, as follows. Firstly, inflation preserves connectivity. 
Secondly,  if $D$ is obtained by inflation on (say) a $\oplus$-essential diagram $\tilde{D}$, then one can freely chose, when applying the map $\iota_1$ to $D$, to first act on the $(n+1)$th circle, which is replaced by an edge by inserting a copy of $T_2$: the result is the diagram $\tilde{D}$ (with coefficient $1$), which by definition closes into $\dessin{0.5cm}{T}$ with positive coefficient.\\
Conversely, since $n\ge 3$, the skeleton of an essential diagram $D$ of degree $(n+1)$ has at least $3$ circles. 
Lemma \ref{Conditions combinatoires} then tells us that $D$ has at least one circle component with exactly two legs. 
Since $D$ is connected, these two legs are the endpoints of two (distinct) mixed chords, which allows us to regard $D$ as the result of an inflation.
\end{proof}

\begin{remark}\label{rem:seeds}
By combining Proposition \ref{Inflations et Essentiels} with Example \ref{ex:essentiels}, we have that, up to permutation of the circle labels, 
any $\oplus$-essential diagram is obtained by iterated inflations from either $\dessin{0.65cm}{D21_1}$ or $\dessin{0.65cm}{D32_1}$, 
and that any $\ominus$-essential diagram is obtained by iterated inflations from either $\dessin{0.65cm}{D21_2}$ or $\dessin{0.65cm}{D32_2}$. 
\end{remark}

We close this section by a technical result on $\ominus$-essential diagrams. 
\begin{lemma}\label{unpeuutilequandmeme}
Let $n\ge 3$ be an integer. 
For any $\ominus$-essential diagram of degree $n$, smoothing a mixed chord always yields a $\ominus$-essential diagram of degree $(n-1)$. 
Conversely, any $\ominus$-essential diagram of degree $(n-1)$ can be obtained in this way. 
\end{lemma}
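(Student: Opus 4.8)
The plan is to derive both implications from Proposition \ref{Inflations et Essentiels} and Remark \ref{rem:lissage}: together these say that smoothing one of the two chords attached to a $2$-leg circle is exactly the inverse of an inflation, and that inflation preserves connectivity, degree, and the value of $(\iota_1(\cdot))_{\le 1}$, hence preserves the property of being $\ominus$-essential.

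I would treat the converse first, as it is immediate. Let $D'$ be a $\ominus$-essential diagram of degree $n-1$; since $n-1\ge 2$, the diagram $D'$ has at least one chord $c'$. Let $D$ be any inflation of $D'$ along $c'$. By Proposition \ref{Inflations et Essentiels}, $D$ is $\ominus$-essential of degree $n$; its added circle carries exactly two chords, both mixed, and by Remark \ref{rem:lissage} smoothing either of them recovers $D'$. Thus $D'$ is obtained from the $\ominus$-essential diagram $D$ by smoothing a mixed chord.

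For the forward implication I would induct on $n\ge 3$. The base case $n=3$ is a finite verification: by Example \ref{ex:essentiels} one has $\mathcal{E}^-(2)=\{\dessin{0.5cm}{D32_2};\dessin{0.5cm}{D32_4};\dessin{0.5cm}{D32_4b}\}$, and one checks directly that smoothing any mixed chord of any of these three diagrams yields $\dessin{0.5cm}{D21_2}$, the unique $\ominus$-essential diagram of degree $2$ (for the last two this also follows from Remark \ref{rem:lissage}, since they are inflations of $\dessin{0.5cm}{D21_2}$ in which every mixed chord is a chord of the added circle). For the inductive step, let $n\ge 4$ and let $D$ be $\ominus$-essential of degree $n$, with a distinguished mixed chord $c$; since $D$ has degree $n\ge 4$, Proposition \ref{Inflations et Essentiels} applies to $D$. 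If $c$ is incident to some circle $C_\star$ carrying exactly two legs, then $c$ is one of the two chords of $C_\star$, and (by the proof of Proposition \ref{Inflations et Essentiels}) we may regard $D$ as an inflation with added circle $C_\star$ of a $\ominus$-essential diagram $\widetilde{D}$ of degree $n-1$; by Remark \ref{rem:lissage}, smoothing $c$ recovers $\widetilde{D}$, which settles this case. Otherwise $c$ joins two circles each carrying at least three legs, so by Lemma \ref{Conditions combinatoires} the diagram $D$ has exactly two $3$-leg circles $C_1,C_2$, all other circles have two legs, and $c$ joins $C_1$ to $C_2$. Since $n\ge 4$, the diagram $D$ has at least three circles, hence contains a $2$-leg circle $C_\star\notin\{C_1,C_2\}$; regarding $D$ as an inflation with added circle $C_\star$ of a $\ominus$-essential diagram $\widetilde{D}$ of degree $n-1$, the chord $c$ is then a mixed chord of $\widetilde{D}$ that is disjoint from $C_\star$ and its two chords, so smoothing $c$ commutes with de-inflating along $C_\star$. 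Applying the induction hypothesis to the mixed chord $c$ of $\widetilde{D}$ (of degree $n-1\ge 3$) and then re-inflating via Proposition \ref{Inflations et Essentiels}, one concludes that the smoothing of $c$ in $D$ is $\ominus$-essential of degree $n-1$.

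The step I expect to be the main obstacle is the combinatorial bookkeeping in this last case: one must verify carefully that smoothing two distinct chords of a chord diagram is independent of the order in which they are smoothed -- keeping track of the relabelings of the circle components induced by each smoothing -- so that the smoothing of $c$ in $D$ really coincides with the inflation, along the appropriate chord, of the smoothing of $c$ in $\widetilde{D}$. A secondary point requiring care is the leg-count argument showing that ``$c$ is incident to no $2$-leg circle'' forces the two-$3$-leg-circle configuration with $c$ joining those two circles; this rests on Lemma \ref{Conditions combinatoires} together with the elementary observation that, in a connected chord diagram on at least two circles, the two legs lying on a $2$-leg circle are endpoints of two distinct mixed chords.
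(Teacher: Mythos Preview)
Your proof is correct and rests on the same ingredients as the paper's: Proposition~\ref{Inflations et Essentiels}, Remark~\ref{rem:lissage}, and the direct check that smoothing any mixed chord of $\dessin{0.5cm}{D32_2}$ yields $\dessin{0.5cm}{D21_2}$. The organization differs somewhat. Rather than inducting, the paper invokes Remark~\ref{rem:seeds} directly: any $\ominus$-essential $D$ is an iterated inflation of $\dessin{0.5cm}{D21_2}$ or $\dessin{0.5cm}{D32_2}$, so every mixed chord of $D$ either arose from an inflation (and smoothing it recovers the previous stage, which is $\ominus$-essential) or is one of the three seed chords of $\dessin{0.5cm}{D32_2}$, where the explicit check applies. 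This is shorter, but it leaves implicit exactly the commutation you flag as the ``main obstacle'' --- one must tacitly reorder the inflations so that the relevant chord is produced last, or equivalently observe that smoothing a seed chord commutes with the subsequent inflations. Your inductive argument makes that step explicit, at the cost of a case split; the paper's version trades that case analysis for an appeal to the global seed description.
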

\begin{proof}[Proof]
Let $D$ be a $\ominus$-essential diagram of degree $n$. 
Remark \ref{rem:seeds} above tells us that $D$ is obtained by iterated inflations from either $\dessin{0.65cm}{D21_2}$ or $\dessin{0.65cm}{D32_2}$. 
As noted in Remark \ref{rem:lissage}, smoothing a (mixed) chord that appeared in one of these inflations yields the diagram before inflation, 
which is a $\ominus$-essential one. So it only remains to observe that smoothing a mixed chord of $\dessin{0.65cm}{D32_2}$ always yields $\dessin{0.65cm}{D21_2}$. 
\end{proof}
Note that the same result holds for $\oplus$-essential diagrams, but is not needed for this paper. 

\subsection{Factorization results}\label{sec:facto}

We now give a collection of factorization results for invariants that are defined as sums of coefficients of chord diagrams in the Kontsevich integral, containing certain particular chord configurations. 

\begin{proposition} \label{Factorisation fr}
Let $\mathcal{D}$ be a set of chord diagrams such that $X = \sum_{D \in \mathcal{D}} C[D]$ is a link invariant. 
Suppose that, for some index $i$, none of the diagrams in $\mathcal{D}$ contains an internal chord on the $i$th circle. 
Let $\mathcal{D}_i$ be the collection of diagrams obtained from those in $\mathcal{D}$ by adding an internal chord on the $i$th circle, in all possible ways. 
Then, for any framed oriented link $L$, we have  
$$\sum_{D' \in \mathcal{D}_i} C_L[D'] = \frac{1}{2} fr_i \times X(L). $$
\end{proposition}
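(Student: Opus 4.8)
The plan is to exploit the crossing-change (skein) behaviour of the coefficients $C[D']$ together with the invariance criterion of Lemma~\ref{Critere Invariance}. First I would fix the framed oriented link $L$ and argue that both sides of the claimed identity are invariants of $L$: the left-hand side is a sum of coefficients of chord diagrams, so by Lemma~\ref{Critere Invariance} it suffices to check that $\sum_{D'\in\mathcal{D}_i}C[D']$ vanishes on any $4T$-combination; this follows because a $4T$ relation among the diagrams of $\mathcal{D}_i$ is obtained, after erasing the one added internal chord on the $i$th circle, from a $4T$ relation among the diagrams of $\mathcal{D}$ (the added chord is a spectator and does not interact with the four chords involved in the $4T$ move), and the latter sum vanishes because $X$ is assumed to be a link invariant. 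The right-hand side $\tfrac12 fr_i\,X(L)$ is visibly an invariant. So it remains to compare the two sides on a convenient representative within each ambient isotopy class; in fact, since both sides are invariants, it is enough to check that they have the same variation under a crossing change \emph{and} agree on one chosen link.

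For the crossing-change step I would use Lemma~\ref{Obvious lemma}. Change a crossing $c$ of $L$, say between components $a$ and $b$ (possibly $a=b$). For a diagram $D'\in\mathcal{D}_i$, write $D'=D\cup\{e_i\}$ where $D\in\mathcal{D}$ and $e_i$ is the added internal chord on the $i$th circle. The variation $C_{L_+}[D']-C_{L_-}[D']$ is computed by Lemma~\ref{Obvious lemma}: it is $C[D']$ evaluated on the local chord contributed by $c$ (plus a degree-correction term if $D'$ has $\ge 3$ chords between $a$ and $b$, which I would need to note does not affect the leading argument). There are two cases. If $\{a,b\}\neq\{i\}$, then the added chord $e_i$ is not involved in this local evaluation, and the variation of $\sum_{D'}C[D']$ equals $\tfrac12 fr_i$ times the variation of $\sum_{D\in\mathcal{D}}C[D]=X$, because $e_i$ contributes the same factor in every term and can be factored out using Lemma~\ref{Factorisation} (disjoint-union splitting of the local picture) — this matches the variation of the right-hand side since $fr_i$ is unchanged by a crossing change at $c\notin K_i$. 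If instead $a=b=i$, then $X(L)$ is unchanged (the internal crossing change on $K_i$ does not affect $X$ since no diagram in $\mathcal{D}$ has an internal chord on circle $i$, and the local chord produced lies on circle $i$, so $C[D](\text{local chord on }i)=0$ for all $D\in\mathcal{D}$), so the right-hand side changes by $\tfrac12(\Delta fr_i)X(L)=\pm\,X(L)$; on the left-hand side, the variation of $\sum_{D'}C[D']$ comes precisely from those $D'$ whose added chord $e_i$ coincides with the local chord, i.e. is the isolated chord at $c$, and summing over $D'\in\mathcal{D}_i$ reassembles $X(L)$ (up to the sign of the crossing change), matching $\pm X(L)$.

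Finally I would pin down the additive constant by evaluating both sides on a link where $K_i$ is a $0$-framed unknot split from the rest: then $fr_i=0$, so the right-hand side is $0$, while on the left-hand side every diagram $D'$ has its one internal chord as an isolated chord on a split unknot component, and by Lemma~\ref{Factorisation} and the fact that $\hat Z$ of a $0$-framed unknot has vanishing internal-chord coefficient in degree $1$ (equivalently $\nu$ has no degree-$1$ term, cf.~\eqref{eq:nu}), each such $C_L[D']$ vanishes; hence both sides are $0$ there, and the identity follows for all $L$.

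I expect the main obstacle to be the bookkeeping in the case $a=b=i$ of the crossing-change step: one must verify carefully that summing the local contributions over all ways of adding the internal chord on $K_i$ genuinely rebuilds $X(L)$ with the correct sign, and that the possible degree-correction term in Lemma~\ref{Obvious lemma} (for diagrams with $\ge 3$ chords between $i$ and $i$) is harmless here — the latter should follow from the fact that such a correction term again produces an internal chord on circle $i$ and is therefore killed against every $D\in\mathcal{D}$.
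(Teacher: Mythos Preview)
Your core skein computation at an internal crossing of $K_i$ (the case $a=b=i$) is correct and is exactly the engine of the paper's proof. The gaps lie elsewhere.

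First, the $4T$-invariance check is incomplete. You assert that the added chord $e_i$ is a spectator in any $4T$ relation, but a $4T$ relation involves two interacting chords and one of them may be $e_i$ itself; erasing it then does not yield a $4T$ relation among diagrams of $\mathcal{D}$. The paper treats this as a separate case: when $e_i$ participates in the $4T$, two of the four terms automatically lie in $\mathcal{D}_i$ (those where the moving chord stays internal on circle $i$), and the remaining two either both lie in $\mathcal{D}_i$ or neither does, so $Y_i$ vanishes on the relation directly.

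Second, and more seriously, your case $\{a,b\}\neq\{i\}$ does not go through. You invoke Lemma~\ref{Factorisation} to factor out the contribution of $e_i$ as $\tfrac12\,fr_i$, but that lemma requires the diagram to split as a disjoint union along skeleton components; an internal chord on circle $i$ does not detach circle $i$ from the rest of $D'$ unless circle $i$ carries no other legs, which is not assumed. There is no non-circular way to extract a factor $\tfrac12\,fr_i$ at this stage. And you genuinely need this case, because your chosen base link (with $K_i$ a $0$-framed unknot \emph{split} from the rest) cannot be reached from a general $L$ using only internal crossing changes on $K_i$; moreover, such crossing changes shift $fr_i$ by $\pm 2$, so a single base case with $fr_i=0$ would not suffice anyway. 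The paper sidesteps all of this by using only internal crossing changes on $K_i$, reducing $K_i$ to an unknot with either no internal crossing or a single isolated positive kink (not split from the rest), and then verifying the identity in both configurations from the explicit local contribution of a kink to $\hat Z$ together with Remark~\ref{rem:central}. Your worry in the final paragraph is misplaced: the $a=b=i$ case is the one that works cleanly.
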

\begin{proof}[Proof]
Set $Y_i=\sum_{D' \in \mathcal{D}_i} C[D']$. 

We first verify that $Y_i$ indeed is a link invariant, using the Invariance Lemma \ref{Critere Invariance}.  
We develop the argument below, although this straightforward (but somewhat lengthy) step will often be ommited in the rest of this paper. 
Consider a $4T$ relation $R'$. 
It suffices to consider the case where $R'$ involves at least one diagram from $\mathcal{D}_i$.  
There are two possibilities.
\begin{enumerate}
	\item The internal chord on the $i$th circle is not involved in $R'$. Since at least one diagram involved in $R'$ has an internal chord on the $i$th circle, this is actually the case for all of them. 
        The four diagrams involved in $R'$ can then be regarded as obtained, by adding an internal chord on the $i$th circle in some way, from diagrams $D_1$, $D_2$, $D_3$ and $D_4$ 
        which satisfy a $4T$ relation $R$ of the form $D_1 - D_2 = D_3 - D_4$.  
        Since $X$ is a link invariant, it satisfies this relation: this proves that $Y_i$ satisfies $R'$. 
        Indeed a diagram involved in $4T$ is in $\mathcal{D}$ if and only if the corresponding diagram involved in $R'$ is in $\mathcal{D}_i$. 
	\item  The internal chord on the $i$th circle is involved in $R'$. 
	We can then write $R'$ as $D'_1 - D'_2 = D'_3 - D'_4$, where $D'_1$ and $D'_2$ both contain an internal chord on the $i$th circle.
	Hence $D'_1$ and $D'_2$ are in $\mathcal{D}_i$, and $Y_i$ vanishes on the left-hand term of relation $R'$. 
	For the remaining two diagrams, there are two cases.  
	If $D'_3$ also contains an internal chord on the $i$th circle, then so does $D'_4$, and both diagrams are in $\mathcal{D}_i$; otherwise, neither $D'_3$ nor $D'_4$ is in $\mathcal{D}_i$. 
	In any case $Y_i$ vanishes on the right-hand term of relation $R'$.  
\end{enumerate}
Thus $Y_i$ is an invariant, and it remains to show the factorization formula. 

Let $L_+ = K_1^+ \cup \ldots \cup K_n^+$ and $L_- = K_1^- \cup \ldots \cup K_n^-$ be the first two terms of a skein triple at an internal crossing of the $i$th component. 
Observe that $\hat{Z}(L_+)$ and $\hat{Z}(L_-)$ only differ by internal chords on the $i$th circle, so that $X(L_+)=X(L_-)$.
By the Skein Lemma \ref{Obvious lemma}, we have\footnote{As in the Skein  Lemma \ref{Obvious lemma}, we only show here the local contribution to $\hat{Z}(L)$ of the crossing involved in the skein triple; we will always implicitely do so in the rest of the paper. }
\begin{eqnarray*}
Y_i(L_+) - Y_i(L_-) & = & \sum_{D' \in \mathcal{D}_{i}} C[D'] \Big( \dessin{0.9cm}{X1} \Big) \\
                               & = & \sum_{D \in \mathcal{D}} C[D] \Big( \dessin{0.9cm}{X0} \Big) \\
                               & = & X(L_\pm).
\end{eqnarray*}
Here, the second equality follows directly from the definition of $\mathcal{D}_i$, while the third equality follows from the definition of $X$. 
But, since $X(L_+)=X(L_-)$ and $ fr(K_i^+)- fr(K_i^-) = 2$, we also have $\frac{1}{2} \times fr(K_i^+) \times X(L_+) - \frac{1}{2} \times fr(K_i^-) \times X(L_-)=X(L_\pm)$. 
Hence the two invariants in the statement satisfy the same skein formula. \\ 
Now, by successive internal crossing changes on the $i$th component, 
we can deform any link into a link $\tilde{L}$ whose $i$th component is isotopic to a copy of the unknot $U_0$, with no internal crossing, 
or a copy of $U_+$, with a single, isolated positive kink: it suffices to check that, in both cases, the formula of the statement holds. 
If the $i$th component of  $\tilde{L}$ is a copy of $U_0$, then $\hat{Z}(\tilde{L})$ contains no diagram with an internal chord on the $i$th circle, hence $Y_i$ vanishes, and the formula holds. 
If the $i$th component of  $\tilde{L}$ is a copy of $U_+$, then the isolated positive kink locally contributes to $\hat{Z}(\tilde{L})$ as recalled in (\ref{Z2}), 
and in particular gives on the $i$th circle:  
 $$ \dessin{0.6cm}{U0} + \frac{1}{2}\dessin{0.6cm}{U1} + \textrm{terms with $>1$ internal chords.} $$
By Remark \ref{rem:central}, there is only one diagram with isolated internal chord on the $i$th circle in $\mathcal{D}_i$, 
which shows that $Y_i(\tilde{L})$ equals $\frac{1}{2}X(\tilde{L})$ in this case, thus showing the desired formula. 
\end{proof}
An example of application of Theorem \ref{Factorisation fr} will be given in Lemma \ref{lem:D11}. 
\medskip 
\begin{proposition} \label{Factorisation lk}
Let $\mathcal{D}$ be a set of chord diagrams such that $X = \sum_{D \in \mathcal{D}} C[D]$ is a link invariant. 
Suppose that none of the diagrams in $\mathcal{D}$ contains a mixed chord between the $i$th and $j$th circles ($i\neq j$).
Let $\mathcal{D}_{ij}$ be the collection of diagrams obtained from those in $\mathcal{D}$ by adding a chord between the $i$th and $j$th circles, in all possible ways. 
Then, for any framed oriented link $L$, we have  
$$\sum_{D \in \mathcal{D}_{ij}} C_L[D] = l_{i, j} \times X(L).$$
\end{proposition}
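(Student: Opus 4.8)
The plan is to mirror the proof of Proposition \ref{Factorisation fr} as closely as possible, replacing the role of internal crossing changes on the $i$th component by \emph{mixed} crossing changes between the $i$th and $j$th components. First I would set $Y_{ij}=\sum_{D\in\mathcal{D}_{ij}}C[D]$ and check, using the Invariance Lemma \ref{Critere Invariance}, that $Y_{ij}$ is a link invariant. The argument is formally identical to the one given for $Y_i$: given a $4T$ relation $R'$ involving at least one diagram of $\mathcal{D}_{ij}$, either the distinguished $i$--$j$ chord is not among the four chords moved by $R'$ — in which case $R'$ is obtained by adding that chord, in a fixed way, to a $4T$ relation $R$ among diagrams of $\mathcal{D}$, on which $X$ vanishes — or the distinguished chord is one of the four, in which case the two diagrams sharing it lie in $\mathcal{D}_{ij}$ and the other two are either both in $\mathcal{D}_{ij}$ or both outside it, so $Y_{ij}$ vanishes on each side of $R'$. (One should note here that the hypothesis ``no diagram of $\mathcal{D}$ has a mixed $i$--$j$ chord'' guarantees the added chord never coincides with a chord already present, so $\mathcal{D}_{ij}$ is genuinely ``$\mathcal{D}$ with one new $i$--$j$ chord''.)

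Next I would establish that $Y_{ij}$ and $l_{i,j}\times X$ satisfy the same skein relation under a mixed crossing change between $K_i$ and $K_j$. Let $L_+,L_-$ be the first two terms of a skein triple at such a mixed crossing. Since $\hat Z(L_+)$ and $\hat Z(L_-)$ differ only by mixed $i$--$j$ chords, none of which appear in diagrams of $\mathcal{D}$, we get $X(L_+)=X(L_-)$. Applying the Skein Lemma \ref{Obvious lemma} and the definition of $\mathcal{D}_{ij}$ exactly as in the previous proof gives
$$ Y_{ij}(L_+)-Y_{ij}(L_-)=\sum_{D'\in\mathcal{D}_{ij}}C[D']\Big(\dessin{0.9cm}{X1}\Big)=\sum_{D\in\mathcal{D}}C[D]\Big(\dessin{0.9cm}{X0}\Big)=X(L_\pm). $$
On the other hand, a mixed crossing change alters $l_{i,j}$ by $\pm 1$, so $l_{i,j}(L_+)X(L_+)-l_{i,j}(L_-)X(L_-)=X(L_\pm)$ as well; hence the difference $Y_{ij}-l_{i,j}\cdot X$ is unchanged by mixed $i$--$j$ crossing changes.

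Finally I would run the base case. By successive crossing changes — first the mixed $i$--$j$ crossings, then all remaining crossings — any link can be deformed into a link $\tilde L$ in which $K_i$ and $K_j$ form a split sublink, each component unknotted and unlinked from everything; concretely one may arrange $l_{i,j}(\tilde L)=0$ and $K_i$, $K_j$ split from $K_i\cup K_j$'s complement and from each other. Then $\hat Z(\tilde L)$ contains no diagram with a mixed $i$--$j$ chord (the relevant part of $\hat Z$ factors as a disjoint union by a standard argument, cf. the Disjoint Union Lemma \ref{Factorisation}), so $Y_{ij}(\tilde L)=0=l_{i,j}(\tilde L)\,X(\tilde L)$, and the two invariants agree on $\tilde L$. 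Since they satisfy the same behaviour under all crossing changes, they agree everywhere. The main obstacle is the base-case analysis: one must be careful that after unlinking $K_i$ and $K_j$ the Kontsevich integral really has no surviving mixed $i$--$j$ chord contribution, which requires invoking that the Kontsevich integral of a split link is the ``disjoint union'' of the integrals of its parts — a point that is immediate from the multiplicativity properties of $Z$ but deserves an explicit mention; the rest is a routine transcription of the proof of Proposition \ref{Factorisation fr}.
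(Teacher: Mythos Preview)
Your approach is essentially the paper's: the invariance argument via the Invariance Lemma and the skein computation are identical to what the paper does. There is, however, a small gap in your base case. You reach $\tilde L$ by ``first the mixed $i$--$j$ crossings, then all remaining crossings'' and then conclude ``since they satisfy the same behaviour under all crossing changes''. But you have only established the variation formula for mixed $i$--$j$ crossing changes; under a crossing change of some other type, $l_{i,j}$ is unchanged while both $X$ and $Y_{ij}$ may vary, and you have not shown these variations agree.

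The paper avoids this by using \emph{only} $i$--$j$ crossing changes and isotopies to reach a link in which $K_i$ and $K_j$ are geometrically split from \emph{each other} (not from the rest of the link). Concretely, after changing crossings so that $K_i$ lies everywhere over $K_j$ in some diagram, one may isotope $K_i$ to sit entirely above a horizontal plane and $K_j$ entirely below it; then no elementary $q$-tangle in a Morse presentation involves strands of both components, so $\hat Z$ has no mixed $i$--$j$ chord and $Y_{ij}=0=l_{i,j}\cdot X$ there. Your extra step of unknotting and unlinking $K_i,K_j$ from everything else is unnecessary, and is exactly where the unjustified ``all crossing changes'' sneaks in. Dropping that step (and the appeal to the Disjoint Union Lemma, which is not needed) makes your argument complete and identical to the paper's.
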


\begin{proof}[Proof]
Set $Y_{ij}=\sum_{D \in \mathcal{D}_{ij}} C[D]$. 
The fact that $Y_{ij}$ is a link invariant is shown by similar arguments as in the proof of Proposition \ref{Factorisation fr}, namely by considering a $4T$ relation and analysing the various cases, depending on whether the diagrams involved in this relation involve a mixed chord between $i$ and $j$ or not.  \\
Now consider the first two terms $L_+$ and $L_-$ of a skein triple at a (mixed) crossing between the $i$th and $j$th components. 
Note that $\hat{Z}(L_+)$ and $\hat{Z}(L_-)$ only differ by terms containing chords  between the $i$th and $j$th circles, so that $X(L_+)=X(L_-)$. 
It follows from  the Skein Lemma \ref{Obvious lemma}, and the definitions of $\mathcal{D}_{ij}$ and $X$, that 
\begin{eqnarray*}
Y_{ij}(L_+) - Y_{ij}(L_-) & = & \sum_{D' \in \mathcal{D}_{ij}} C[D'] \Big( \dessin{0.9cm}{X1} \Big) \\
                                     & = & \sum_{D \in \mathcal{D}} C[D] \Big( \dessin{0.9cm}{X0} \Big) \\
                                     & = & X(L_\pm)                                
\end{eqnarray*}
which indeed coincides with  $l_{i,j}(L_+) \times X(L_+) - l_{i,j}(L_-) \times X(L_-)$ (since $l_{i,j}(L_+) - l_{i,j}(L_-) = 1$). 
It remains to observe that, by a sequence of crossing changes between the $i$th and $j$th components and isotopies, any link can be deformed into a link where the $i$th and $j$th components are geometrically split. The desired formula is easily checked for such links, and the result follows. 
\end{proof}
A simple application of Proposition \ref{Factorisation lk} is given in Lemma \ref{lem:D12}. 

More generally, the following is a consequence of Theorem \ref{Factorisation lk}, which identifies the invariant underlying an infection. 
\begin{proposition} \label{Inflation et invariance}
Let $\mathcal{D}$ be a set of chord diagrams such that $X = \sum_{D \in \mathcal{D}} C[D]$ is an $n$-component link invariant. 
Let $\mathcal{D}_I$  be the collection of diagrams obtained from those in $\mathcal{D}$ by all possible infections on the $i$th circle, for some $i\in\{1,\cdots,n\}$. 
Then, for any framed oriented $(n+1)$-component link $L$, we have  
$$\sum_{D' \in \mathcal{D}_I} C_L[D'] = l_{i, n+1}\times X(L_{\check{n+1}}).$$
\end{proposition}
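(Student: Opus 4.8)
The plan is to deduce this statement from Proposition~\ref{Factorisation lk} by unravelling what an infection actually does to a chord diagram. Recall that an infection of a diagram $D$ along an interval $I$ on the $i$th circle inserts a new $(n+1)$th circle carrying exactly two legs, joined by two mixed chords to the $i$th circle at the endpoints of $I$. So a diagram $D'\in\mathcal{D}_I$ is built from some $D\in\mathcal{D}$ by adding a small circle with precisely two mixed chords running to the $i$th component. The key observation is that, from the point of view of the $(n+1)$th and $i$th circles, this is nothing but the two-step process: first add the new isolated circle (with no chords), then add \emph{two} chords between the $i$th circle and this new circle.

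First I would set up the bookkeeping. Let $\mathcal{D}^{\sqcup}$ denote the set of diagrams on $n+1$ circles obtained from $D\in\mathcal{D}$ by adjoining a disjoint $(n+1)$th circle with no legs; by the Disjoint Union Lemma~\ref{Factorisation}, $X^{\sqcup}:=\sum_{D\in\mathcal{D}^{\sqcup}}C[D]$ is again a link invariant (on $(n+1)$-component links), since $C_L[D\sqcup(\text{empty circle})] = C_{L_{\check{n+1}}}[D]\cdot C_{K_{n+1}}[\text{empty circle}] = X(L_{\check{n+1}})$. Now none of the diagrams in $\mathcal{D}^{\sqcup}$ has a mixed chord between circles $i$ and $n+1$, so Proposition~\ref{Factorisation lk} applies: adding one chord between $i$ and $n+1$ in all possible ways yields an invariant equal to $l_{i,n+1}\cdot X^{\sqcup}$. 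The diagrams so produced have exactly one leg on the $(n+1)$th circle.

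The main point, and the step I expect to be the main obstacle, is to iterate this correctly: I would apply Proposition~\ref{Factorisation lk} a \emph{second} time, now to the set of diagrams with one chord between $i$ and $n+1$, adding a second such chord in all possible ways, to reach a set of diagrams with two legs on the $(n+1)$th circle. The subtlety is that Proposition~\ref{Factorisation lk} as stated requires the hypothesis that \emph{no} diagram in the set already has a mixed chord between $i$ and $n+1$ — which fails after the first application — so I cannot invoke it verbatim. Instead I expect to need a direct skein-theoretic argument (a mild variant of the proof of Proposition~\ref{Factorisation lk}): running a crossing-change induction between the $i$th and $(n+1)$th components on the invariant $\sum_{D'\in\mathcal{D}_I}C[D']$, one picks up, via the Skein Lemma~\ref{Obvious lemma}, the sum over diagrams with one fewer chord between $i$ and $n+1$, i.e. the one-chord invariant from the previous paragraph. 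A careful count of multiplicities (the two chords of an infection are unordered, so each $D'\in\mathcal{D}_I$ is reached twice in an ordered two-step addition) has to be tracked. The base case of the induction is a link in which the $i$th and $(n+1)$th components are geometrically split: there $l_{i,n+1}=0$ and $\hat{Z}$ contains no diagram with a chord between those two circles, so $\sum_{D'\in\mathcal{D}_I}C_L[D']=0$, matching the right-hand side.

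Finally I would assemble the pieces: two applications of the $l_{i,n+1}$-factorization would naively give a factor $l_{i,n+1}^2$, but the correct statement has a single factor $l_{i,n+1}$, which is exactly what the careful multiplicity/skein analysis above produces — the second "chord addition" is constrained to land on the same $(n+1)$th circle with its single existing leg, so it contributes the crossing-change increment $1$ rather than a fresh linking-number factor. Thus $\sum_{D'\in\mathcal{D}_I}C_L[D'] = l_{i,n+1}\cdot X^{\sqcup}(L) = l_{i,n+1}\cdot X(L_{\check{n+1}})$, as claimed. I would also remark that invariance of $\sum_{D'\in\mathcal{D}_I}C[D']$ follows either from the skein argument directly or, more cheaply, from the Invariance Lemma~\ref{Critere Invariance} by the same case analysis on $4T$ relations used in the proof of Proposition~\ref{Factorisation lk}, now distinguishing according to how many of the two infection chords are involved in the relation.
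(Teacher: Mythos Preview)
You have misread the definition of \emph{infection}. An infection along an interval $I$ on the $i$th circle adds a new $(n+1)$th circle carrying exactly \emph{one} leg, attached to the $i$th circle by a \emph{single} mixed chord; it is the \emph{inflation} that produces a new circle with two legs. (Compare the picture in the definition of infection, and note that in $\mathcal{P}(2)=\{\dessin{0.5cm}{D32_7};\dessin{0.5cm}{D32_7b}\}$ each diagram has one circle with a single leg.) Consequently your ``main obstacle'' --- the second application of Proposition~\ref{Factorisation lk} and the ensuing multiplicity/skein discussion --- is based on a false premise and is not needed at all.

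In fact, the first half of your argument already \emph{is} the paper's proof, and it is complete. Adjoining a bare $(n+1)$th circle to each $D\in\mathcal{D}$ gives a set $\mathcal{D}_\circ$ with $\sum_{D_\circ\in\mathcal{D}_\circ}C_L[D_\circ]=X(L_{\check{n+1}})$ by the Disjoint Union Lemma~\ref{Factorisation}; none of these diagrams has a chord between circles $i$ and $n+1$, so a single application of Proposition~\ref{Factorisation lk} yields the factor $l_{i,n+1}$. The resulting set of diagrams --- those in $\mathcal{D}_\circ$ with one $(i,n+1)$-chord added in all possible ways --- is precisely $\mathcal{D}_I$, since adding one chord from the $i$th circle to an empty $(n+1)$th circle is exactly what an infection does. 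So you should simply stop after your first application of Proposition~\ref{Factorisation lk}; there is no second chord to add, no iteration, and no multiplicity issue.
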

\begin{proof}[Proof]
Denote by $\mathcal{D}_\circ$ the collection of diagrams obtained from those in $\mathcal{D}$ by adding a copy of $\dessin{0.5cm}{D01}$ labeled by $(n+1)$. 
Then for any framed oriented $(n+1)$-component link $L$, we have  by the Disjoint Union Lemma \ref{Factorisation} that 
$\sum_{D_\circ \in \mathcal{D}_\circ} C_L[D_\circ] = X(L_{\check{n+1}})$. 
It then suffices to apply Proposition \ref{Factorisation lk} to the $i$th and $(n+1)$th circles of all diagrams in $\mathcal{D}_\circ$. 
\end{proof}

Similarly, the next result identifies the invariant underlying an inflation. 
\begin{proposition} \label{Factorisation gonflage}
Let $\mathcal{D}$ be a set of chord diagrams such that $X = \sum_{D \in \mathcal{D}} C[D]$ is an $n$-component link invariant. 
Let $\mathcal{D}_{G}$ be the collection of diagrams obtained from those in $\mathcal{D}$ by adding the following local diagram, called \emph{inflated chord}, 
in all possible ways\\[-0.3cm]
\begin{center}
\begin{tikzpicture}
	\draw (0,0.5) node[left]{$i$};
	\draw[-] [very thick] (0,0) -- (0,1);
	\draw [densely dashed] (0,0.5) -- (1,0.5);
	\draw [very thick] (1,0.5) arc(180:540:0.5);
	\draw[<-] [very thick] (1.5,1) -- (1.51,1);
	\draw (1.5,0) node[below]{$n+1$};
	\draw [densely dashed] (2,0.5) -- (3,0.5);
	\draw (3,0.5) node[right]{$j$};
	\draw[-] [very thick] (3,0) -- (3,1);
\end{tikzpicture}
\end{center}
Then, for any framed oriented $(n+1)$-component link $L$, we have 
\begin{eqnarray*} 
\sum_{D \in \mathcal{D}_{G}} C_L[D] = l_{i, n+1} \times l_{j, n+1} \times X(L_{\check{n+1}}), & \textrm{ if $i\neq j$,} \\
\sum_{D \in \mathcal{D}_{G}} C_L[D] = \frac{1}{2} l^2_{i, n+1} \times X(L_{\check{n+1}}), & \textrm{ if $i=j$.} 
\end{eqnarray*}
\end{proposition}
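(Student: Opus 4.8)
The plan is to deduce Proposition \ref{Factorisation gonflage} from Proposition \ref{Inflation et invariance} (the infection result) by applying it twice, once for each of the two new chords attached to the $(n+1)$th circle. The key observation is that an inflated chord is exactly an edge of the $(n+1)$th circle flanked by two dashed chords, one going to the $i$th circle and one to the $j$th; peeling off these chords one at a time realizes $\mathcal{D}_G$ as the result of two successive infections applied to an auxiliary family of diagrams.

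First I would set $Y_G = \sum_{D\in\mathcal{D}_G} C[D]$ and introduce the intermediate family $\mathcal{D}_\bullet$ of diagrams obtained from those in $\mathcal{D}$ by adding a new circle labeled $(n+1)$ carrying a single leg joined by a mixed chord to the $j$th circle, in all possible ways — equivalently, the diagrams produced from $\mathcal{D}$ by first adjoining a one-legged circle (a copy of $\dessin{0.5cm}{D01}$) and then infecting along the $j$th circle. By the Disjoint Union Lemma and Proposition \ref{Inflation et invariance}, $\sum_{D\in\mathcal{D}_\bullet} C_L[D] = l_{j,n+1}\times X(L_{\check{n+1}})$; call this invariant $X'$, an $(n+1)$-component link invariant whose diagrams by construction contain no mixed chord between the $i$th and $(n+1)$th circles (when $i\neq j$) and whose only leg-bearing feature on the $(n+1)$th circle so far is the single chord to $j$. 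Now $\mathcal{D}_G$ is precisely the family obtained from $\mathcal{D}_\bullet$ by a single infection along the $i$th circle: infecting along $i$ adds a new one-legged bubble which, after absorbing it, is a chord from $i$ to the $(n+1)$th circle, placed at an arbitrary interval — running over all intervals on the $(n+1)$th circle recovers exactly the two dashed legs of the inflated chord sitting in all relative positions around that circle. Applying Proposition \ref{Inflation et invariance} once more (to the $i$th and $(n+1)$th circles, relative to $X'$) gives $Y_G(L) = l_{i,n+1}\times X'(L_{\check{n+1}}) = l_{i,n+1}\, l_{j,n+1}\, X(L_{\check{n+1}})$, which is the claimed formula when $i\neq j$.

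For the case $i=j$ the same two-step argument applies, but now both new chords land on the \emph{same} circle $i$, and the factor of $\frac{1}{2}$ appears for the same reason as in Proposition \ref{Factorisation fr}: the two chords attached to the $(n+1)$th circle are, by Remark \ref{rem:central}, interchangeable (the edge of the $(n+1)$th circle between them carries an isolated-chord symmetry), so each inflated-chord diagram is counted twice when one naively runs the double-infection construction, and one divides by $2$. Concretely, after the first infection along $i$ one has an invariant equal to $l_{i,n+1}\times X(L_{\check{n+1}})$ supported on diagrams with one mixed chord between $i$ and $(n+1)$; the second infection along $i$ would produce $l_{i,n+1}^2\times X(L_{\check{n+1}})$, but the resulting diagrams come in pairs related by swapping the two $i$–$(n+1)$ chords, whereas $\mathcal{D}_G$ contains only one representative of each such pair, giving the factor $\frac{1}{2}$.

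The main obstacle I anticipate is the bookkeeping in the $i=j$ case: one must argue carefully that "all possible ways" of inserting an inflated chord corresponds to exactly half of the diagrams produced by the double-infection procedure, i.e. that the two legs on the $(n+1)$th circle are genuinely symmetric in $\mathcal{A}(\circlearrowleft^{n+1})$ and that the sum $X$ being a link invariant forces the coefficient assignment to respect this symmetry. This is handled exactly as in the last paragraph of the proof of Proposition \ref{Factorisation fr}, via Remark \ref{rem:central} and a crossing-change reduction to the case where the $(n+1)$th component is a small kinked unknot linking the $i$th component $l_{i,n+1}$ times; I would either spell this out or simply invoke the verification there by analogy. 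A minor secondary point is checking that the intermediate family $\mathcal{D}_\bullet$ indeed satisfies the hypothesis of Proposition \ref{Inflation et invariance} (no mixed chord between the relevant pair of circles), which is immediate from the construction since the new circle starts with no chords before the infections are performed.
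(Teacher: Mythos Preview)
Your treatment of the case $i\neq j$ has the right shape but mislabels the second step. An infection, by definition, creates a \emph{new} circle with a single leg; applying Proposition~\ref{Inflation et invariance} a second time would produce an $(n+2)$th circle, not a chord from the $i$th to the existing $(n+1)$th circle, and the expression $X'(L_{\check{n+1}})$ you write does not even type-check (your $X'$ is already an $(n+1)$-component invariant). The paper's argument is the one you are groping towards: first perform an infection on the $i$th circle (Proposition~\ref{Inflation et invariance}), and then add a mixed chord between the $j$th and $(n+1)$th circles in all possible ways via Proposition~\ref{Factorisation lk}. The hypothesis of the latter is satisfied precisely because the newly created $(n+1)$th circle carries only the single chord to $i$, and $i\neq j$. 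So this case is essentially correct once you invoke the right proposition.

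The case $i=j$ has a genuine gap, and the paper does \emph{not} proceed by your symmetry-and-halving argument. Your plan is to run the two-step construction and then divide by $2$ because each inflated-chord diagram is hit twice. But the second step---adding another mixed chord between $i$ and $(n+1)$---cannot be justified by Proposition~\ref{Factorisation lk}: its hypothesis explicitly requires that no diagram in the family already carries a chord between the two circles in question, and after your first infection every diagram in $\mathcal{D}_\bullet$ does. That hypothesis is what makes $X(L_+)=X(L_-)$ in the proof of Proposition~\ref{Factorisation lk}; without it you have no control over the second factor of $l_{i,n+1}$, and the claimed intermediate value $l_{i,n+1}^2\,X(L_{\check{n+1}})$ is simply unproven. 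The appeal to Remark~\ref{rem:central} does not help either: that remark lets an isolated chord slide along the skeleton, but it does not turn a forbidden application of Proposition~\ref{Factorisation lk} into a valid one.

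The paper instead handles $i=j$ by a direct skein argument: one computes $Y_G(L_+)-Y_G(L_-)$ at a crossing between the $i$th and $(n+1)$th components, reduces it via the Skein Lemma to a sum over $\mathcal{D}_I$ (the single-infection family), and then uses Proposition~\ref{Inflation et invariance} once to identify this variation with $l_{i,n+1}(L_-)\,X+\tfrac{1}{2}X$, which matches the variation of $\tfrac{1}{2}l_{i,n+1}^2\,X(L_{\check{n+1}})$. Equality on split links then finishes the proof. The $\tfrac{1}{2}$ arises not from a counting symmetry but from the algebra of $l^2$ under a crossing change.
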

\begin{proof}[Proof] 
The case $i\neq j$ is a rather immediate consequence of the previous results. Indeed, in this case, the elements of $\mathcal{D}_{G}$ can be seen as obtained from those of $\mathcal{D}$ by first, all possible infections on the $i$th circle, then all possible ways of adding a mixed chord between the $j$th and $(n+1)$th circles (the latter one resulting from the infection). 
The result thus follows from Propositions \ref{Factorisation lk} and \ref{Inflation et invariance}.\\
We now prove the case $i=j$.
The fact that $Y_{G}:=\sum_{D \in \mathcal{D}_{G}} C[D]$ indeed defines an invariant is done in a similar way as in the previous proofs, and is left as an exercise to the reader.  
We prove that $Y_{G}(L)$ coincides with $\frac{1}{2} l^2_{i,n+1} \times X(L_{\check{n+1}})$ by showing that both invariants have same variation formula under a crossing change between the $i$th and $(n+1)$th components: since these invariants both vanish on links where these two components are geometrically split, the result will follow.\\
Let $(L_+, L_-, L_0)$ be a skein triple at a crossing between the $i$th and $(n+1)$th components. 
On one hand, from the definitions of $\mathcal{D}_{G}$, we have 
 \begin{eqnarray*}
Y_{G}(L_+) - Y_{G}(L_-) & = & \sum_{D' \in \mathcal{D}_{G}} C[D'] \Big( \dessin{0.9cm}{X1} \Big) \\
                                       & = & \sum_{\widetilde{D} \in \mathcal{D}_I} C[\widetilde{D}] \Big( \dessin{0.9cm}{X0} \Big) 
\end{eqnarray*}
where $\mathcal{D}_I$ denotes the set of all diagrams obtained from $\mathcal{D}$ by an infection on the $i$th circle, in all possible ways. 
The second equality thus holds by the fact that inserting an inflated chord on the $i$th circle is achieved by first, an infection on the $i$th circle, followed by the insertion of a mixed chord. 
Now, by definition of the Kontsevich integral at a negative crossing  (\ref{Z2}), for any $\widetilde{D} \in \mathcal{D}_I$, we have 
 $$ C_{L-}[\widetilde{D}] = C[\widetilde{D}] \Big( \dessin{0.9cm}{X0} \Big) -\frac{1}{2} C[\widetilde{D}] \Big( \dessin{0.9cm}{X1} \Big), $$
where the local picture still involves the $i$th and $(n+1)$th circle. Hence by subsitution we obtain 
 \begin{eqnarray*}
Y_{G}(L_+) - Y_{G}(L_-) & = & \sum_{\widetilde{D} \in \mathcal{D}_I} C_{L-}[\widetilde{D}] + \frac{1}{2} \sum_{\widetilde{D} \in \mathcal{D}_I}  C[\widetilde{D}] \Big( \dessin{0.9cm}{X1} \Big) \\
                                       & = & l_{i,n+1}(L_-)\times X((L_\pm)_{\check{n+1}}) + \frac{1}{2}X((L_\pm)_{\check{n+1}}).
\end{eqnarray*}
Here, the last equality uses the definition of $X$ and Proposition \ref{Factorisation lk}, and the fact that $(L_+)_{\check{n+1}}=(L_-)_{\check{n+1}}$.
On the other hand, using simply the fact that $ l_{i,n+1}(L_+) = l_{i,n+1}(L_-)+1$, 
we have
$$ \frac{1}{2} l^2_{i,n+1}(L_+) \times X((L_+)_{\check{n+1}}) - \frac{1}{2} l^2_{i,n+1}(L_-) \times X((L_-)_{\check{n+1}})  $$
$$ = \frac{1}{2} X((L_\pm)_{\check{n+1}})  \left( l^2_{i,n+1}(L_+)  - l^2_{i,n+1}(L_-) \right) $$
$$ = \frac{1}{2} X((L_\pm)_{\check{n+1}}) \left( 2l_{i,n+1}(L_-) + 1 \right),  $$
which shows that the two invariants in the statement satisfy the same skein formula. This concludes the proof. 
\end{proof}

\subsection{Some results in low degree}

In this section, we identify, in low degrees, some combinations of coefficients of the Kontsevich integral in terms of classical invariants. 
We begin with a few simple applications of our factorization results, most of which are well-known to the experts.  

The following is an elementary application of Theorem \ref{Factorisation fr} and the obvious formula $C_K\Big[ \dessin{0.5cm}{D01}\Big]=1$. 
\begin{lemma}\label{lem:D11}
Let $K$ be a framed oriented knot. We have 
$$ C_K\Big[ \dessin{0.5cm}{D11}\Big] = \frac{1}{2} fr(K).$$
\end{lemma}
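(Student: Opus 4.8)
The plan is to apply Proposition \ref{Factorisation fr} (which the excerpt calls ``Theorem \ref{Factorisation fr}'') with $n=1$ to the simplest possible invariant. First I would take $\mathcal{D}$ to be the singleton set consisting of the empty chord diagram $\dessin{0.5cm}{D01}$ on a single circle, so that $X = C\big[\dessin{0.5cm}{D01}\big]$. This $X$ is a link invariant (it is the constant invariant $K\mapsto 1$, since the coefficient of the empty diagram in $\hat Z(K)$ is always $1$; alternatively it vacuously satisfies the Invariance Lemma \ref{Critere Invariance}, as no nontrivial $4T$ relation involves only the empty diagram). Moreover $\mathcal{D}$ trivially contains no internal chord on the (only) circle.

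Next I would identify $\mathcal{D}_i$ in this case: adding an internal chord on the single circle ``in all possible ways'' produces, up to the relation of Remark \ref{rem:central}, only the diagram $\dessin{0.5cm}{D11}$ (a single isolated chord on one circle). So $\sum_{D'\in\mathcal{D}_1} C_L[D'] = C_K\big[\dessin{0.5cm}{D11}\big]$. Proposition \ref{Factorisation fr} then immediately gives
$$ C_K\Big[\dessin{0.5cm}{D11}\Big] \;=\; \frac{1}{2} fr_1 \times X(K) \;=\; \frac{1}{2} fr(K),$$
using $X(K)=1$.

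The only point requiring a small amount of care is the claim that, up to $4T$-consequences, there is a \emph{unique} diagram obtained by adding an internal chord on a circle carrying no other legs — but this is exactly the content of Remark \ref{rem:central}: an isolated chord commutes past everything, and here there is nothing to commute past, so all placements of the single internal chord are equal in $\mathcal{A}(\circlearrowleft)$. I expect no real obstacle here; the entire statement is a direct instantiation of the general factorization result, and the ``hard part'' — namely proving Proposition \ref{Factorisation fr} itself — has already been done. One could equally verify the formula directly from Example \ref{Calculs Kontsevich} and the behaviour of $\hat Z$ under kink changes, but invoking Proposition \ref{Factorisation fr} is cleaner and is the intended proof.
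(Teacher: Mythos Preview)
Your proposal is correct and matches the paper's own approach exactly: the paper states this lemma as ``an elementary application of Theorem \ref{Factorisation fr} and the obvious formula $C_K\big[\dessin{0.5cm}{D01}\big]=1$,'' which is precisely what you do. A tiny simplification: on a single circle with no other legs there is literally only one place to put an internal chord, so you do not even need to invoke Remark \ref{rem:central}.
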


The following can be seen as a consequence of either Theorem \ref{Factorisation lk} or \ref{Inflation et invariance}. 
\begin{lemma}\label{lem:D12}
Let $L$ be a framed oriented $2$-component link. We have 
$$ C_L\Big[ \dessin{0.5cm}{D12}\Big] = l_{1,2}.$$
\end{lemma}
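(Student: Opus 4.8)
The plan is to apply the factorization result of Proposition \ref{Factorisation lk} to the simplest possible situation. Take $\mathcal{D}$ to consist of the single chord diagram on $\circlearrowleft^2$ with no chords at all. By the obvious formula $C_K\big[\dessin{0.5cm}{D01}\big]=1$ together with the Disjoint Union Lemma \ref{Factorisation}, the coefficient of this empty diagram is the constant $X(L)=1$ on framed oriented $2$-component links; in particular $X=\sum_{D\in\mathcal{D}}C[D]$ is a link invariant, as required by the hypothesis of Proposition \ref{Factorisation lk}, and the empty diagram evidently contains no mixed chord between its first and second circles.

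Next I would identify the family $\mathcal{D}_{12}$ obtained from $\mathcal{D}$ by adding a chord between the first and second circles in all possible ways. Since there are no other legs on either circle, there is, up to the equivalence defining chord diagrams, exactly one such diagram, namely $\dessin{0.5cm}{D12}$. Hence $\sum_{D'\in\mathcal{D}_{12}}C_L[D']=C_L\big[\dessin{0.5cm}{D12}\big]$, and Proposition \ref{Factorisation lk} applied with $i=1$, $j=2$ gives $C_L\big[\dessin{0.5cm}{D12}\big]=l_{1,2}\times X(L)=l_{1,2}$, which is the claim.

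Alternatively, the same conclusion follows from Proposition \ref{Inflation et invariance}: starting from the constant invariant $X\equiv 1$ on knots, coming from $C_K\big[\dessin{0.5cm}{D01}\big]=1$, one observes that infecting the single circle in all possible ways produces exactly the diagram $\dessin{0.5cm}{D12}$, whence $C_L\big[\dessin{0.5cm}{D12}\big]=l_{1,2}\times X(L_{\check{2}})=l_{1,2}$. There is no genuine obstacle here; the only point requiring a moment's care is the bookkeeping that $\mathcal{D}_{12}$ reduces to a single diagram, which is immediate since the initial diagram carries no legs.
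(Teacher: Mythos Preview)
Your proof is correct and follows exactly the approach the paper indicates: the paper simply states that the lemma ``can be seen as a consequence of either'' Proposition~\ref{Factorisation lk} or Proposition~\ref{Inflation et invariance}, without spelling out the details, and you have supplied precisely those details for both routes. The bookkeeping you flag (that $\mathcal{D}_{12}$ reduces to a single diagram) is indeed immediate, so nothing further is needed.
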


We next give two simple examples of applications of Proposition \ref{Factorisation gonflage}. 
The first example uses the case $i=j$ of the proposition.
\begin{lemma}\label{lem:D22}
Let $L$ be a framed oriented $2$-component link. We have 
$$ C_L\Big[ \dessin{0.5cm}{D22}\Big] =\frac{1}{2} l^2_{1,2}.$$
\end{lemma}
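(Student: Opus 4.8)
The statement to prove is Lemma~\ref{lem:D22}: for a framed oriented $2$-component link $L$, the coefficient $C_L[\dessin{0.5cm}{D22}]$ of the chain of two circles equals $\frac12 l_{1,2}^2$.

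The plan is to apply Proposition~\ref{Factorisation gonflage} (factorization for inflations) in the case $i=j$. First I would note that the chord diagram $\dessin{0.5cm}{D22}$ (the chain of two circles) is obtained from the single chord diagram $\dessin{0.5cm}{D01}$ on one circle — the circle with no chords — by adding an inflated chord, where the two legs of the inflated chord both land on that single circle, i.e.\ the case $i=j=1$. Indeed, inflating a chord that has both endpoints on a circle with no chords produces exactly the chain of two circles: the original circle keeps two legs and the new $(n+1)$th circle carries two legs, which is the shape of $\dessin{0.5cm}{D22}$. So take $\mathcal{D}=\{\dessin{0.5cm}{D01}\}$ on $n=1$ circle, with $X=C[\dessin{0.5cm}{D01}]$; this is the constant invariant $X(K)=C_K[\dessin{0.5cm}{D01}]=1$, which is trivially a (framed oriented) knot invariant. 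Then $\mathcal{D}_G$, the set of diagrams obtained by adding an inflated chord in all possible ways, is precisely $\{\dessin{0.5cm}{D22}\}$ — there is only one way to do this up to the relations, since both legs of the inflated chord go on the unique circle which has no other legs.

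Now Proposition~\ref{Factorisation gonflage}, in the case $i=j$, gives
$$ \sum_{D\in\mathcal{D}_G} C_L[D] = \frac12 l_{i,n+1}^2 \times X(L_{\check{n+1}}). $$
With $n=1$, $i=1$, $n+1=2$, and $X\equiv 1$, the right-hand side is $\frac12 l_{1,2}^2$, while the left-hand side is $C_L[\dessin{0.5cm}{D22}]$. This yields the claimed identity.

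The only point requiring a little care — and the main (though minor) obstacle — is to verify that $\mathcal{D}_G$ contains no diagram other than $\dessin{0.5cm}{D22}$, and in particular that there is no subtlety about "all possible ways" of inserting the inflated chord producing several distinct diagrams that one would need to sum. Since the circle carrying $\dessin{0.5cm}{D01}$ has no legs at all, every choice of where to attach the two ends of the inflated chord gives the same chord diagram $\dessin{0.5cm}{D22}$ up to isotopy of the skeleton, so $\sum_{D\in\mathcal{D}_G} C_L[D] = C_L[\dessin{0.5cm}{D22}]$. One should also check that $X=C[\dessin{0.5cm}{D01}]$ is indeed an invariant (it is the constant $1$, so this is immediate, and in any case it vacuously satisfies the $4T$ criterion of Lemma~\ref{Critere Invariance}). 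With these remarks the lemma follows directly from Proposition~\ref{Factorisation gonflage}.
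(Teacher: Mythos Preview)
Your proof is correct and follows exactly the approach indicated in the paper: the lemma is stated there as a direct application of the case $i=j$ of Proposition~\ref{Factorisation gonflage}, starting from the trivial invariant $X=C[\dessin{0.5cm}{D01}]=1$, and you have filled in precisely these details.
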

\noindent The second example uses the case $i\neq j$ of Proposition \ref{Factorisation gonflage}, combined with Lemma \ref{lem:D22} above. 
\begin{lemma}\label{Invariant particulier}
Let $L$ be a framed oriented $3$-component link. Then for $\{i,j,k\}=\{1,2,3\}$ we have 
$$ C_L\Big[ \dessin{0.9cm}{D43_1}\Big] + C_L\Big[ \dessin{0.9cm}{D43_2}\Big] = \frac{1}{2} \times l_{i, j} \times l_{i, k} \times l_{j, k}^2. $$
\end{lemma}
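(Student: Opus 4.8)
The plan is to apply Proposition~\ref{Factorisation gonflage} in the case $i\neq j$, taking for $\mathcal{D}$ a one-element set of chord diagrams whose coefficient sum is already known to be a link invariant. Concretely, on a $2$-component link we know from Lemma~\ref{lem:D22} that $X:=C\big[\dessin{0.5cm}{D22}\big]$ is an invariant of framed oriented $2$-component links with $X(L')=\tfrac12 l_{1,2}^2$. I would relabel so that the $2$-circle diagram sits on the components indexed by $j$ and $k$ (that is, the components that remain after deleting the $i$th component), and then perform all possible inflations of an inflated chord connecting the $j$th and $k$th circles, the new circle being labelled $n+1=i$ after an appropriate relabelling. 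Inspecting the combinatorics, the two diagrams $\dessin{0.9cm}{D43_1}$ and $\dessin{0.9cm}{D43_2}$ are exactly the members of $\mathcal{D}_G$ obtained this way: inserting an inflated chord into the unique chord of the chain-of-two-circles diagram on the $j,k$-components produces precisely these two configurations, according to which of the two arcs of the extra circle the $j$-leg and $k$-leg are attached to.

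The key steps, in order, are: (i) record that $X=C\big[\dessin{0.5cm}{D22}\big]$ is a $2$-component invariant with value $\tfrac12 l_{j,k}^2$ (Lemma~\ref{lem:D22}); (ii) check that $\mathcal{D}=\{\dessin{0.5cm}{D22}\}$ satisfies the hypothesis of Proposition~\ref{Factorisation gonflage}, namely that the diagram contains no chord between the $j$th circle and the newly introduced circle, and none between the $k$th circle and the new circle --- which is automatic since the new circle does not yet exist, i.e. $\mathcal{D}$ is a set of diagrams on the $j$ and $k$ circles only; (iii) identify $\mathcal{D}_G$ with $\big\{\dessin{0.9cm}{D43_1},\dessin{0.9cm}{D43_2}\big\}$ by listing the ways to insert an inflated chord between circles $j$ and $k$; (iv) invoke the $i\neq j$ case of Proposition~\ref{Factorisation gonflage} to conclude
$$ C_L\Big[\dessin{0.9cm}{D43_1}\Big] + C_L\Big[\dessin{0.9cm}{D43_2}\Big] = l_{i,j}\times l_{i,k}\times X(L_{\check{i}}) = \frac{1}{2}\, l_{i,j}\, l_{i,k}\, l_{j,k}^2, $$
using $X(L_{\check{i}})=\tfrac12 l_{j,k}^2$ from step (i). Here $L_{\check{i}}$ is the $2$-component sublink on components $j$ and $k$, and the roles of the indices $i,n+1$ in the Proposition are played by (the old) $j$ (or $k$) and $i$ respectively, after relabelling.

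The only genuinely delicate point is step~(iii): one must carefully match the pictorial definition of the inflated chord and of the inflation operation against the two displayed diagrams, making sure that \emph{all} ways of inserting the inflated chord between the $j$th and $k$th circles are accounted for and that each yields one of $\dessin{0.9cm}{D43_1}$, $\dessin{0.9cm}{D43_2}$ with no repetitions or omissions --- in particular one should check there is no subtlety coming from the orientation of the circles or from the isolated-chord commutativity of Remark~\ref{rem:central}. This is a finite, routine verification but it is where the bookkeeping must be done honestly; everything else is a direct citation of Lemma~\ref{lem:D22} and Proposition~\ref{Factorisation gonflage}.
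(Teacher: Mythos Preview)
Your proposal is correct and follows exactly the approach indicated in the paper: the lemma is stated there as ``the case $i\neq j$ of Proposition~\ref{Factorisation gonflage}, combined with Lemma~\ref{lem:D22}'', which is precisely your steps (i)--(iv). Your identification of the delicate point (the enumeration of $\mathcal{D}_G$) is apt, and the paper leaves this verification implicit as well.
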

\begin{remark}
Direct proofs of the above four lemmas, which do not make use of general factorization results, can be found in \cite{Casejuane}. 
\end{remark}

The next two results involve the coefficients $c_2$ and $c_3$ of the Conway polynomial. 
\begin{proposition} \label{c_2}
Let $K$ be a framed oriented knot. We have 
$$C_K\Big[ \dessin{0.5cm}{D21_1}\Big] = \frac{1}{8} fr(K)^2 + \frac{1}{24} - c_2(K),$$
$$C_K\Big[ \dessin{0.5cm}{D21_2}\Big] = c_2(K) - \frac{1}{24}.$$
\end{proposition}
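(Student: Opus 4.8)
The plan is to identify the two coefficients $C_K\big[\dessin{0.5cm}{D21_1}\big]$ and $C_K\big[\dessin{0.5cm}{D21_2}\big]$ separately, using the skein-theoretic machinery already developed. First I would observe that there are exactly two chord diagrams of degree $2$ on a single circle, namely $\dessin{0.5cm}{D21_1}$ (two nested chords) and $\dessin{0.5cm}{D21_2}$ (two interleaved chords), together with the degree $\le 1$ diagrams; moreover these two degree-$2$ diagrams are \emph{not} independent in $\mathcal{A}(\circlearrowleft)$ — they are related by a $4T$ relation. A convenient first step is therefore to pin down one linear combination of the two coefficients directly: the combination corresponding to \lq total degree-$2$ content\rq. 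In fact, by Theorem \ref{Factorisation fr} applied twice (adding two internal chords to the empty diagram on the circle, using $C_K\big[\dessin{0.5cm}{D00}\big]=1$), or more simply by the standard computation of the $\nu$-free part, one gets $C_K\big[\dessin{0.5cm}{D21_1}\big] + C_K\big[\dessin{0.5cm}{D21_2}\big]$ — suitably interpreted via the $4T$ relation — in terms of $fr(K)^2$. Concretely, the sum of the coefficients of the two independent degree-$2$ one-circle diagrams in $\hat Z(K)$ equals $\tfrac18 fr(K)^2$, coming from the square of the self-linking contribution $\tfrac12 fr(K)\,\dessin{0.5cm}{D00}$ recorded in (\ref{Z2}).

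Next I would isolate $C_K\big[\dessin{0.5cm}{D21_2}\big]$ via a skein argument. Set $X := C\big[\dessin{0.5cm}{D21_2}\big]$; this is \emph{not} a priori an invariant, but I claim the modified assignment $X(K) - \tfrac{1}{?}fr(K)^2 - c$ is, for appropriate constants, or more cleanly: use the Skein Lemma \ref{Obvious lemma}. At a self-crossing of $K$, forming a skein triple $(K_+, K_-, K_0)$ — where $K_0$ is a $2$-component link — the Skein Lemma gives $C_{K_+}\big[\dessin{0.5cm}{D21_2}\big] - C_{K_-}\big[\dessin{0.5cm}{D21_2}\big] = C_{K_0}\big[D'\big]$ where $D'$ is the smoothing of $\dessin{0.5cm}{D21_2}$ along one chord, i.e. the single-mixed-chord diagram $\dessin{0.5cm}{D12}$ on two circles (the interleaved chords become one chord joining the two circles and one that is smoothed away). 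By Lemma \ref{lem:D12}, $C_{K_0}\big[\dessin{0.5cm}{D12}\big] = l_{1,2}(K_0)$. On the other hand the Conway coefficient satisfies $c_2(K_+) - c_2(K_-) = c_1(K_0) = l_{1,2}(K_0)$. Hence $C\big[\dessin{0.5cm}{D21_2}\big]$ and $c_2$ have the same skein variation under self-crossing changes, so their difference is a crossing-change invariant of knots, hence constant (any knot reduces to an unknot with some framing by crossing changes, and the framing-dependence must also be tracked — but $\dessin{0.5cm}{D21_2}$, being the interleaved diagram, receives \emph{no} contribution from framing kinks since a kink contributes only nested/isolated internal chords as in the proof of Proposition \ref{Factorisation fr}, via Remark \ref{rem:central}). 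Evaluating on the $0$-framed unknot using (\ref{eq:nu}) gives $C_{U_0}\big[\dessin{0.5cm}{D21_2}\big] = -\tfrac{1}{24}$ while $c_2(U)=0$, fixing the constant: $C_K\big[\dessin{0.5cm}{D21_2}\big] = c_2(K) - \tfrac1{24}$.

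Finally, the first formula follows by subtraction: from the degree-$2$ total computed in the first step, $C_K\big[\dessin{0.5cm}{D21_1}\big] = \tfrac18 fr(K)^2 - C_K\big[\dessin{0.5cm}{D21_2}\big]$, but one must be careful that $\dessin{0.5cm}{D21_1}$ \emph{does} pick up framing contributions; the clean way is instead to repeat the skein argument for $C\big[\dessin{0.5cm}{D21_1}\big]$ directly. Smoothing $\dessin{0.5cm}{D21_1}$ along its outer chord gives again a diagram on two circles (one isolated internal chord on one circle, distributed), and combining the Skein Lemma with Lemmas \ref{lem:D11}, \ref{lem:D12} and the $4T$-relation shows $C\big[\dessin{0.5cm}{D21_1}\big]$ has skein variation $l_{1,2}(K_0)\cdot(\text{something}) - c_1(K_0)$; matching against $\tfrac18 fr^2 + \tfrac1{24} - c_2$ and evaluating on $U_0$ (where (\ref{eq:nu}) gives $C_{U_0}\big[\dessin{0.5cm}{D21_1}\big] = \tfrac1{24}$) closes the argument, with the framing term $\tfrac18 fr(K)^2$ produced exactly as in Lemma \ref{lem:D11}/Proposition \ref{Factorisation fr} by crossing-changing $K$ to a kinked unknot. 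The main obstacle I anticipate is the bookkeeping across the $4T$ relation: since $\dessin{0.5cm}{D21_1}$ and $\dessin{0.5cm}{D21_2}$ are linearly dependent in $\mathcal{A}(\circlearrowleft)$, one must be scrupulous about which coefficient is being extracted (a choice of complement to the $4T$ subspace), and ensure the framing-kink contributions are attributed to the correct diagram — this is where Remark \ref{rem:central} and the explicit low-degree expansions (\ref{Z2}) and (\ref{eq:nu}) do the real work.
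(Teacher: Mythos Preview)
Your skein argument for $C_K\big[\dessin{0.5cm}{D21_2}\big]$ is essentially the paper's: smooth to get $\dessin{0.5cm}{D12}$, invoke Lemma~\ref{lem:D12} to obtain $c_1(L_0)$, match with the Conway skein relation, and evaluate on the unknot. The paper simply checks both $U_0$ and $U_+$ rather than arguing a~priori that kinks contribute only nested chords; either works. Your fallback for $\dessin{0.5cm}{D21_1}$ --- a direct skein computation via Lemma~\ref{lem:D11} --- is also exactly what the paper does.

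Two points need correction, though. First, your claim that $\dessin{0.5cm}{D21_1}$ and $\dessin{0.5cm}{D21_2}$ are linearly dependent in $\mathcal{A}(\circlearrowleft)$ via $4T$ is false: the paper observes that the only $4T$ relation involving either diagram is \emph{trivial}, so each coefficient $C\big[\dessin{0.5cm}{D21_1}\big]$ and $C\big[\dessin{0.5cm}{D21_2}\big]$ is separately a framed-knot invariant by Lemma~\ref{Critere Invariance}. The ``main obstacle'' you anticipate simply isn't there, and your hedging about ``which coefficient is being extracted'' is unnecessary.

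Second, your shortcut for the first formula --- obtaining $C\big[\dessin{0.5cm}{D21_1}\big]+C\big[\dessin{0.5cm}{D21_2}\big]=\tfrac{1}{8}fr(K)^2$ by applying Proposition~\ref{Factorisation fr} twice --- does not work as stated. After one application you land on $\{\dessin{0.5cm}{D11}\}$, which already carries an internal chord, so the hypothesis of Proposition~\ref{Factorisation fr} fails for the second application. The sum formula happens to be correct (it follows by adding the two identities once proved), but you cannot use it as an input. The paper avoids this entirely by running the skein argument for $\dessin{0.5cm}{D21_1}$ directly: smoothing gives $\dessin{0.5cm}{D11}\,\dessin{0.5cm}{D01}$ or $\dessin{0.5cm}{D01}\,\dessin{0.5cm}{D11}$, Lemma~\ref{lem:D11} yields $\tfrac{1}{2}fr(K_1)+\tfrac{1}{2}fr(K_2)$, and one checks this matches the variation of $\tfrac{1}{8}fr^2+\tfrac{1}{24}-c_2$.
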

\begin{proof}[Proof]
The fact that $C\Big[ \dessin{0.5cm}{D21_1}\Big]$ and $C\Big[ \dessin{0.5cm}{D21_2}\Big]$ define knot invariants follows from the Invariance Lemma \ref{Critere Invariance}, noting that the only $4T$ relation involving either of these two diagrams is a trivial one. \\
Let us prove the first statement. 
Let $K_+$, $K_-$ and $L_0 = K_1 \cup K_2$ be a skein triple at a knot crossing $c$. 
On one hand, by  the Skein Lemma \ref{Obvious lemma}, 
\begin{eqnarray*}
C_{K_+}\Big[ \dessin{0.5cm}{D21_1}\Big] - C_{K_-}\Big[ \dessin{0.5cm}{D21_1}\Big] 
 & = & C\Big[ \dessin{0.5cm}{D21_1}\Big] \Big(\dessin{0.9cm}{X1} \Big) \\
 & = & C\Big[ \dessin{0.5cm}{D11} \, \dessin{0.5cm}{D01}\Big] \Big(\dessin{0.9cm}{X} \Big) + 	
           C\Big[ \dessin{0.5cm}{D01} \, \dessin{0.5cm}{D11}\Big] \Big(\dessin{0.9cm}{X} \Big)\\
 & = & \frac{1}{2} fr(K_1) + \frac{1}{2} fr(K_2) \\
 & = & \frac{1}{2} \Big(fr(K_-) + 1 - 2 c_1(L_0)\Big).
\end{eqnarray*}
Here, the second equality is given by smoothing the chord contributed by $c$. As illustrated by Figure \ref{fig:lissage}, this smoothing maps $\dessin{0.65cm}{D21_1}$ to either 
$\dessin{0.65cm}{D11} \, \dessin{0.65cm}{D01}$ or $\dessin{0.65cm}{D01} \, \dessin{0.65cm}{D11}$; 
conversely, the latter two diagrams can only be obtained, by smoothing an internal chord, from $\dessin{0.65cm}{D21_1}$. 
The third equality then follows from Lemma \ref{lem:D11}, while the last equality is easily verified. 
On the other hand, the difference $(\frac{1}{8} fr(K_+)^2 + \frac{1}{24} - c_2(K_+)) - (\frac{1}{8} fr(K_-)^2 + \frac{1}{24} - c_2(K_-))$ can be written, using the skein relation for the Conway coefficients, as 
$$ \frac{1}{8}(\underbrace{fr(K_+) - fr(K_-)}_{=2})(\underbrace{fr(K_+) + fr(K_-)}_{=2fr(K_-) + 2}) - c_1(L_0).  $$ 
This shows that both invariants in the first statement have same variation formula under a (knot) crossing change, 
and it only remains to check that they coincide on both $U_0$ and $U_+$. 
Using the formulas for $\hat{Z}(U_0)$ and $\hat{Z}(U_+)$ recalled in Section \ref{sec:Kontsevich}, we have
$$C_{U_0}\Big[ \dessin{0.5cm}{D21_1} \Big] = \frac{1}{48} + \frac{1}{48} = \frac{1}{24} = \frac{1}{8} fr(U_0)^2) + \frac{1}{24} - c_2(U_0)$$
and 
$$C_{U_+}\Big[ \dessin{0.5cm}{D21_1} \Big] = \frac{1}{48} + \frac{1}{48} + \frac{1}{8} = \frac{1}{6} = \frac{1}{8} fr(U_+)^2 + \frac{1}{24} - c_2(U_+),$$
which concludes the proof of the first statement. \\
The proof of the second statement uses the same skein triple and is very similar. 
The same argument, only using Lemma \ref{lem:D12} instead of Lemma \ref{lem:D11}, gives 
\begin{eqnarray*}
C_{K_+}\Big[ \dessin{0.5cm}{D21_2}\Big] - C_{K_-}\Big[ \dessin{0.5cm}{D21_2}\Big] 
 & = & C\Big[ \dessin{0.5cm}{D21_2}\Big] \Big(\dessin{0.9cm}{X1} \Big) \\
 & = & C\Big[ \dessin{0.5cm}{D12}\Big] \Big(\dessin{0.9cm}{X} \Big) \\
 & = & C_{L_0} \Big[ \dessin{0.5cm}{D12}\Big] \\
 & = & c_1(L_0), 
\end{eqnarray*}
which clearly coincides with the variation formula for $c_2 - \frac{1}{24}$. 
The statement then follows from the equalities $C_{U_+}\Big[ \dessin{0.5cm}{D21_2} \Big]  = C_{U_0}\Big[ \dessin{0.5cm}{D21_2} \Big] =  - \frac{1}{24}$.
\end{proof}
\begin{remark}\label{rem:Oka}
 The second statement of Proposition \ref{c_2} can be found in \cite[Prop.~4.4]{Okamoto1997}. 
 Our next result actually fixes a mistake in \cite[Prop.~4.6~(1)]{Okamoto1997}; likewise, \cite[Prop.~4.6~(2)]{Okamoto1997} is corrected in Theorem \ref{c_n}. 
\end{remark}
\begin{proposition} \label{c_3}
Let $L$ be a framed oriented $2$-component link. Then $c_3(L)$ is given by the formula 
$$C_L\Big[ \dessin{0.5cm}{D32_2} \Big]+ C_L\Big[ \dessin{0.5cm}{D32_4} \Big] + C_L\Big[ \dessin{0.5cm}{D32_4b} \Big] + C_L\Big[ \dessin{0.5cm}{D32_7} \Big]+ C_L\Big[ \dessin{0.5cm}{D32_7b} \Big]. $$
\end{proposition}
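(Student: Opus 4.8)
The plan is to proceed by induction on crossing changes, exactly as in the proof of Proposition~\ref{c_2}, using the skein formula $c_3(L_+) - c_3(L_-) = c_2(L_0)$ for the Conway coefficients together with the Skein Lemma~\ref{Obvious lemma}. First I would check that $Y := C[\dessin{0.4cm}{D32_2}] + C[\dessin{0.4cm}{D32_4}] + C[\dessin{0.4cm}{D32_4b}] + C[\dessin{0.4cm}{D32_7}] + C[\dessin{0.4cm}{D32_7b}]$ is a genuine invariant of framed oriented $2$-component links, via the Invariance Lemma~\ref{Critere Invariance}: one lists the $4T$ relations involving any of these five diagrams and verifies that $Y$ vanishes on each such linear combination. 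Since all five diagrams have degree $3$ and only two or three legs on each circle, the relevant $4T$ relations are few and the check is routine but must be done carefully, as some $4T$ relations will permute diagrams among these five.

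Next I would split the skein-triple argument according to the type of crossing in $L = K_1 \cup K_2$. For a \emph{mixed} crossing between the two components, smoothing the chord contributed by that crossing turns a $2$-component diagram into a $1$-component (knot) diagram of degree $3$; I expect the diagrams $\dessin{0.4cm}{D32_2}$, $\dessin{0.4cm}{D32_4}$, $\dessin{0.4cm}{D32_4b}$ to smooth onto (copies of) $\dessin{0.4cm}{D21_2}$ on a knot, while $\dessin{0.4cm}{D32_7}$, $\dessin{0.4cm}{D32_7b}$ smooth onto either $\dessin{0.4cm}{D21_2}$ or a disconnected diagram; summing and invoking Proposition~\ref{c_2} and the disjoint-union Lemma~\ref{Factorisation}, the total should be $C_{K_0}[\dessin{0.4cm}{D21_2}] + (\text{constant correction}) = c_2(K_0) - \tfrac{1}{24} + \tfrac{1}{24} = c_2(K_0)$, matching the Conway skein variation. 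For an \emph{internal} crossing on, say, $K_1$, the contributed chord becomes an internal chord after the difference, and smoothing it produces diagrams on $3$ circles; here the factorization results (Propositions~\ref{Factorisation fr} and \ref{Factorisation lk}) will be needed to show that $Y(L_+) - Y(L_-) = c_2((L_0)) - \tfrac{1}{24}\cdot(\text{something})$ still matches $c_2(L_0)$ where $L_0$ has a split extra circle, so the variation on both sides agrees. The cleanest route is probably to observe that $c_3$ depends only on the two sublinks and linking data in a way compatible with the $U_n$ recursion and then reduce internal crossing changes to the factorization lemmas directly.

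Finally, having matched variation formulas under all crossing changes, I would verify the base case: by internal and mixed crossing changes any $2$-component link is deformable to a split union of two unknots with prescribed framings, say $U_{f_1} \sqcup U_{f_2}$. On such a link $c_3 = 0$, and one computes each $C_{U_{f_1}\sqcup U_{f_2}}[D]$ from the explicit low-degree Kontsevich integral $\hat{Z}(U_\pm)$ recalled in Example~\ref{Calculs Kontsevich} together with Lemma~\ref{Factorisation}: since $D32_2$ is connected it contributes $0$ on a split link, while the remaining four diagrams split as disjoint unions of $1$-circle diagrams, whose coefficients are known (via Lemmas~\ref{lem:D11}, \ref{lem:D12} and Proposition~\ref{c_2}), and the contributions should cancel. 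The main obstacle I anticipate is bookkeeping: correctly enumerating which smoothings of the five degree-$3$ diagrams yield which lower-degree diagrams (with multiplicity and with the relabeling conventions for the new circle from Figure~\ref{fig:lissage}), and tracking the $\tfrac{1}{24}$ constant corrections so that the telescoping over crossing changes lands exactly on $c_3$ rather than $c_3$ plus a constant — in particular one must confirm no correction term survives, in contrast to the $n=1$ case.
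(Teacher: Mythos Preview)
Your overall strategy matches the paper's: check invariance via $4T$, then compare skein variations with $c_3$, then verify a base case. However, you overcomplicate the argument in two places and misidentify both the source of the $\tfrac{1}{24}$ and the structure of the five diagrams.

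First, internal crossing changes are unnecessary. Mixed crossing changes alone suffice to deform any $2$-component link into a geometrically split one, and on split links both $c_3$ and $X_3$ vanish (all five diagrams are \emph{connected} on two circles --- $D32\_4$, $D32\_4b$ are $\ominus$-essential and $D32\_7$, $D32\_7b$ are infections of $\dessin{0.4cm}{D21_2}$, hence connected; your claim that four of them ``split as disjoint unions of $1$-circle diagrams'' is incorrect). So the base case is immediate and no framing bookkeeping is needed.

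Second, the $+\tfrac{1}{24}$ correction at a mixed crossing does not arise from disconnected smoothings. It comes directly from the second case of the Skein Lemma~\ref{Obvious lemma}: the diagram $\dessin{0.4cm}{D32_2}$ has three mixed chords between the two circles, so its variation picks up the extra $\tfrac{1}{24}\dessin{0.4cm}{X3}$ term, which on two circles is exactly $\tfrac{1}{24}\,\dessin{0.4cm}{D32_2}$. The other four diagrams have at most two mixed chords and contribute only the $\dessin{0.4cm}{X1}$ term. Smoothing any mixed chord of any of the five diagrams always yields $\dessin{0.4cm}{D21_2}$ (never a disconnected diagram), so the total variation is $C_{L_0}[\dessin{0.4cm}{D21_2}] + \tfrac{1}{24} = c_2(L_0)$ by Proposition~\ref{c_2}. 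This is precisely the paper's argument; once you make these two corrections your proof is complete and coincides with it.
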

\begin{proof}[Proof]
Set 
$$X_3:=C\Big[ \dessin{0.5cm}{D32_2} \Big]+ C\Big[ \dessin{0.5cm}{D32_4} \Big] + C\Big[ \dessin{0.5cm}{D32_4b} \Big] + C\Big[ \dessin{0.5cm}{D32_7} \Big]+ C\Big[ \dessin{0.5cm}{D32_7b} \Big]. $$ 
The only non-trivial $4T$ relations involving one of the diagrams above are the following 
$$ \dessin{0.65cm}{D32_1}  - \dessin{0.65cm}{D32_2}  = \dessin{0.65cm}{D32_3}  - \dessin{0.65cm}{D32_4} = \dessin{0.65cm}{D32_3b}  - \dessin{0.65cm}{D32_4b}, $$
and the Invariance Lemma \ref{Critere Invariance} can then be used to show that $X_3$ is a link invariant. \\
Now, let $L_+ = K_1^+ \cup K_2^+$, $L_- = K_1^- \cup K_2^-$ and $L_0$ be a skein triple at a mixed crossing. 
By  the Skein Lemma \ref{Obvious lemma} we have 
\begin{eqnarray*}
 C_{L_+}\Big[ \dessin{0.5cm}{D32_2} \Big] - C_{L_-}\Big[ \dessin{0.5cm}{D32_2} \Big] 
   & = & C\Big[ \dessin{0.5cm}{D32_2} \Big] \Big(\dessin{0.9cm}{X1} + \frac{1}{24}\dessin{0.9cm}{X3}\Big) \\
   & = & C\Big[ \dessin{0.5cm}{D32_2} \Big] \Big(\dessin{0.9cm}{X1}\Big) + \frac{1}{24}, 
\end{eqnarray*}
where the second equality follows from the observation that the local configuration $\dessin{0.9cm}{X3}$ on two circles yields the diagram $\dessin{0.65cm}{D32_2}$. 
Since the other diagrams defining $X_3$ have $\le 2$ mixed chords, we thus have by  the Skein Lemma \ref{Obvious lemma} that $X_3(L_+) - X_3(L_-)$ is given by 
$$\left(C\Big[ \dessin{0.45cm}{D32_2} \Big] \!\!+\! C\Big[ \dessin{0.45cm}{D32_4} \Big] \!\!+\! C\Big[ \dessin{0.45cm}{D32_4b} \Big]  \!\!+\!  C\Big[ \dessin{0.45cm}{D32_7} \Big]  \!\!+\!  C\Big[ \dessin{0.45cm}{D32_7b} \Big]\right)\!\! \Big(\dessin{0.8cm}{X1}\Big) + \frac{1}{24}. $$
But each of the above diagrams has the property that, smoothing a mixed chord always yields $\dessin{0.65cm}{D21_2}$ and, conversely, the latter can only be obtain by such a smoothing from one of the above five diagrams. This shows that 
$$ X_3(L_+) - X_3(L_-) =  C\Big[ \dessin{0.5cm}{D21_2} \Big]\Big(\dessin{0.9cm}{X}\Big) + \frac{1}{24}. $$
Proposition \ref{c_2}, and the skein relation for $c_3$, then give 
$$X_3(L_+) - X_3(L_-) = c_2(L_0) = c_3(L_+) - c_3(L_-).  $$
The result follows since both $X_3$ and $c_3$ vanish on split links.
\end{proof} 
\begin{remark}\label{rem:inv_split}
Using the non-trivial $4T$ relations given at the beginning of this proof, and the Invariance Lemma \ref{Critere Invariance}, we actually have that 
$C\Big[ \dessin{0.5cm}{D32_2} \Big]+ C\Big[ \dessin{0.5cm}{D32_4} \Big] + C\Big[ \dessin{0.5cm}{D32_4b} \Big]$ and 
$C\Big[ \dessin{0.5cm}{D32_7} \Big]+ C\Big[ \dessin{0.5cm}{D32_7b} \Big]$ are themselves link invariants. 
In fact, the latter is easily identified using Propositions \ref{c_2} and \ref{Inflation et invariance}: we have 
$$ C_{K_1\cup K_2}\Big[ \dessin{0.5cm}{D32_7} \Big] +C_{K_1\cup K_2}\Big[ \dessin{0.5cm}{D32_7b} \Big] =  l_{1,2}\Big( c_2(K_1)+ c_2(K_2)-\frac{1}{12}\Big). $$
\noindent Observe that this formula coincides with $c_3(K_1\cup K_2) - U_3(K_1\cup K_2)$, a fact that will be widely generalized in Section \ref{sec:conwayKontsevich}. 
\end{remark}

The next result will also be needed later. We omit the proof, 
since one can be found in \cite[Prop.~4.1~(3)]{Okamoto1997}; see also \cite{Casejuane} for a proof using the techniques of the present paper.
\begin{proposition}\label{prop:lk3}
Let $L$ be a framed oriented $2$-component link.  We have
 $$ C_{L}\Big[ \dessin{0.5cm}{D32_1} \Big] + C_{L}\Big[ \dessin{0.5cm}{D32_2} \Big] = \frac{1}{6}l_{1,2}^3. $$
\end{proposition}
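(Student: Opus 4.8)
The plan is to follow the pattern of the proofs of Propositions~\ref{c_2} and \ref{c_3}. Set $X:=C[\dessin{0.5cm}{D32_1}]+C[\dessin{0.5cm}{D32_2}]$. First I would show, via the Invariance Lemma~\ref{Critere Invariance}, that $X$ is a link invariant: as already observed in the proof of Proposition~\ref{c_3}, the only nontrivial $4T$ relations involving $\dessin{0.5cm}{D32_1}$ or $\dessin{0.5cm}{D32_2}$ are $\dessin{0.5cm}{D32_1}-\dessin{0.5cm}{D32_2}=\dessin{0.5cm}{D32_3}-\dessin{0.5cm}{D32_4}=\dessin{0.5cm}{D32_3b}-\dessin{0.5cm}{D32_4b}$, and $X$ plainly vanishes on each of them. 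Both diagrams being connected, $X$ vanishes on every split $2$-component link, just as $\frac16 l_{1,2}^3$ does. Since any $2$-component link can be reduced to a split one by crossing changes, it then suffices to match the variations of $X$ and of $\frac16 l_{1,2}^3$ under a crossing change.

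As $\dessin{0.5cm}{D32_1}$ and $\dessin{0.5cm}{D32_2}$ consist of three mixed chords and have no internal chord, the Skein Lemma~\ref{Obvious lemma} shows at once that $X$ is unchanged under a crossing change at a self-crossing of $K_1$ or of $K_2$ -- as is $\frac16 l_{1,2}^3$, which depends only on $l_{1,2}$. For a skein triple $(L_+,L_-,L_0)$ at a crossing between $K_1$ and $K_2$, both diagrams having $\ge 3$ mixed chords, the Skein Lemma gives
$$X(L_+)-X(L_-)=\big(C[\dessin{0.5cm}{D32_1}]+C[\dessin{0.5cm}{D32_2}]\big)\Big(\dessin{0.9cm}{X1}+\frac1{24}\dessin{0.9cm}{X3}\Big),$$
in which the $\dessin{0.9cm}{X3}$-term contributes the constant $\frac1{24}$ coming from $\dessin{0.5cm}{D32_2}$ (equal to three parallel mixed chords), while in the $\dessin{0.9cm}{X1}$-term, smoothing the mixed chord that carries the crossing merges the two components into a knot $L_0$ and turns each of $\dessin{0.5cm}{D32_1}$, $\dessin{0.5cm}{D32_2}$ into a degree-$2$ diagram on a single circle, namely a copy of $\dessin{0.5cm}{D21_1}$ or $\dessin{0.5cm}{D21_2}$, whose coefficients on $L_0$ are given by Proposition~\ref{c_2}. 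Using the Conway skein relation together with $fr(L_0)=fr(K_1)+fr(K_2)+2l_{1,2}+1$ to re-express everything in terms of $l_{1,2}$, one then checks that $X(L_+)-X(L_-)=\frac16\big(l_{1,2}(L_+)^3-l_{1,2}(L_-)^3\big)$.

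The delicate point, which I expect to be the main obstacle, is this last verification: one must determine with correct multiplicities which of $\dessin{0.5cm}{D21_1}$, $\dessin{0.5cm}{D21_2}$ is produced by smoothing each mixed chord of $\dessin{0.5cm}{D32_1}$ and of $\dessin{0.5cm}{D32_2}$ -- an orientation-sensitive matching, parallel to Lemma~\ref{unpeuutilequandmeme} and its analogue for $\oplus$-essential diagrams -- and then see that the framing- and $c_2$-dependent parts of $C_{L_0}[\dessin{0.5cm}{D21_1}]$ and $C_{L_0}[\dessin{0.5cm}{D21_2}]$ cancel, leaving only the expected cubic in $l_{1,2}$. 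A cleaner, less computational alternative would be to note that the linear map on $\mathcal{A}(\circlearrowleft^2)$ killing every diagram that has an internal chord is well defined (a $4T$ relation consists either entirely of all-mixed diagrams or entirely of diagrams with an internal chord), so that the all-mixed part of $\hat{Z}(L)$ inherits the grouplike property of $\hat{Z}(L)$; together with the known values $C_L[\dessin{0.5cm}{D12}]=l_{1,2}$ (Lemma~\ref{lem:D12}) and $C_L[\dessin{0.5cm}{D22}]=\frac12 l_{1,2}^2$ (Lemma~\ref{lem:D22}), this forces that element to be, up to degree $3$, the exponential of $l_{1,2}$ times a single mixed chord, whose degree-$3$ part is $\frac16 l_{1,2}^3$ times the parallel diagram $\dessin{0.5cm}{D32_2}$; since $X$ is a link invariant, the formula $C_L[\dessin{0.5cm}{D32_1}]+C_L[\dessin{0.5cm}{D32_2}]=\frac16 l_{1,2}^3$ follows.
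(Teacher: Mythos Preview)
The paper omits the proof of this proposition, citing \cite{Okamoto1997} and \cite{Casejuane}, so I assess your proposal on its own.

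Your skein approach has the right global shape, but what you flag as the ``delicate point'' is in fact a genuine obstruction, not merely a computation to be filled in. The smoothing mechanism that drives the proofs of Propositions~\ref{c_2} and~\ref{c_3} works because the target set of lower-degree diagrams is \emph{saturated} under un-smoothing: every way of inserting the crossing chord into any $D'$ lands back in the original set $\mathcal S$. Here that fails. As the ``conversely'' clause in the proof of Proposition~\ref{c_3} already records, un-smoothing $\dessin{0.5cm}{D21_2}$ at a mixed crossing produces, depending on where its legs sit relative to the crossing, any of $\dessin{0.5cm}{D32_2},\dessin{0.5cm}{D32_4},\dessin{0.5cm}{D32_4b},\dessin{0.5cm}{D32_7},\dessin{0.5cm}{D32_7b}$; only the first lies in $\{\dessin{0.5cm}{D32_1},\dessin{0.5cm}{D32_2}\}$. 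Consequently $\big(C[\dessin{0.5cm}{D32_1}]+C[\dessin{0.5cm}{D32_2}]\big)\big(\dessin{0.9cm}{X1}\big)$ is \emph{not} expressible as a fixed linear combination of $C_{L_0}[\dessin{0.5cm}{D21_1}]$ and $C_{L_0}[\dessin{0.5cm}{D21_2}]$. Indeed, any such combination would carry a dependence on $fr(L_0)$ or $c_2(L_0)$, whereas (assuming the proposition) the quantity in question equals $\tfrac12 l^2+\tfrac12 l+\tfrac18$ with $l=l_{1,2}(L_-)$, a function of the linking number alone. So the cancellation you anticipate cannot occur.

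Your second approach rests on a false premise. The very $4T$ relation you quote, $\dessin{0.5cm}{D32_1}-\dessin{0.5cm}{D32_2}=\dessin{0.5cm}{D32_3}-\dessin{0.5cm}{D32_4}$, has two all-mixed diagrams on one side and two diagrams carrying an internal chord on the other. Hence the linear map ``kill every diagram with an internal chord'' does \emph{not} descend to $\mathcal A(\circlearrowleft^2)$, the ``all-mixed part'' of $\hat Z(L)$ is not a well-defined element of any quotient, and the grouplike/exponential argument cannot be run on it.
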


More generally, the techniques used in this section can be used to identify, in degree $\le 3$, \emph{all} invariants arising as coefficients of the Kontsevich integral. 
All  remaining formulas are direct applications of our factorization results. For example, 
the following is given by Lemma \ref{lem:D11} and Proposition \ref{Factorisation gonflage}: 
\begin{equation}\label{rem:deg3}
 C_{K_1\cup K_2}\Big[ \dessin{0.5cm}{D32_3} \Big] + C_{K_1\cup K_2}\Big[ \dessin{0.5cm}{D32_4} \Big] =  \frac{1}{4} fr_1 \times l^2_{1,2}.
\end{equation}

\subsection{Conway polynomial and the Kontsevich integral}\label{sec:conwayKontsevich}

We now generalize Proposition \ref{c_3}, by explicitly identifying all Conway coefficients in terms of the Kontsevich integral. 

Recall from Section \ref{sec:essentiels} that $\mathcal{E}^-(n)$ denotes the set of $\ominus$-essential diagrams on $n$ circles, which are all of degree $n+1$. 

For an integer $n\ge 1$, denote by $\mathcal{P}(n+1)$ the set of all diagrams obtained by an infection on an element of $\mathcal{E}^-(n)$. 
\begin{ex}\label{ex3}
Since $\mathcal{E}^-(1)=\{\dessin{0.5cm}{D21_2}\}$, we have $\mathcal{P}(2)=\{\dessin{0.5cm}{D32_7};\dessin{0.5cm}{D32_7b}\}$.
\end{ex}
\begin{theorem} \label{c_n}
Let $n\ge 2$. For any framed oriented $n$-component link $L$, we have 
 $$\sum_{D \in \mathcal{E}^-(n) \cup \mathcal{P}(n)} C_L[D] = c_{n+1}(L).$$
\end{theorem}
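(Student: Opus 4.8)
The plan is to mimic the structure of the proof of Proposition \ref{c_3}, which is the case $n=2$, and to run an induction on $n$ using the skein relation $c_{n+1}(L_+)-c_{n+1}(L_-) = c_n(L_0)$ for the Conway coefficients. Set $X_{n+1} := \sum_{D \in \mathcal{E}^-(n)\cup\mathcal{P}(n)} C[D]$. First I would check that $X_{n+1}$ is a link invariant. By the Invariance Lemma \ref{Critere Invariance} it suffices to see that $X_{n+1}$ vanishes on every $4T$ relation; the relevant $4T$ relations are exactly those appearing inside the inflation/infection combinatorics, and by Proposition \ref{Inflations et Essentiels} the set $\mathcal{E}^-(n)$ is closed under the symmetry that swaps the two terms of a $4T$ relation (it is generated by inflations from the two seed diagrams of Remark \ref{rem:seeds}), and similarly for $\mathcal{P}(n)$, which is the image of $\mathcal{E}^-(n-1)$ under a single infection. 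This is the routine-but-lengthy step; I would either spell it out or, as the authors do elsewhere, leave the case analysis to the reader.

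Next, the core computation: take a skein triple $(L_+,L_-,L_0)$ at a \emph{mixed} crossing between, say, the $(n-1)$st and $n$th components, and compute $X_{n+1}(L_+)-X_{n+1}(L_-)$ using the Skein Lemma \ref{Obvious lemma}. The key observations, generalizing what happens for $n=2$, are: (i) smoothing a mixed chord of any diagram $D\in\mathcal{E}^-(n)$ yields a $\ominus$-essential diagram of degree $n-1$ by Lemma \ref{unpeuutilequandmeme}, and conversely every $D \in \mathcal{E}^-(n-1)$ arises this way; (ii) smoothing a mixed chord of a diagram $D\in\mathcal{P}(n)$ — coming from an infection on some $E\in\mathcal{E}^-(n-1)$ — yields either $E$ itself (smoothing the infection chord, by Remark \ref{rem:lissage}) or, when the smoothed chord is a genuine mixed chord of $E$, a diagram in $\mathcal{P}(n-1)$; (iii) the high-order term $\frac{1}{24}\dessin{0.9cm}{X3}$ in the Skein Lemma contributes only for diagrams with $\ge 3$ mixed chords between the two relevant circles, and — just as in the $n=2$ case where this term produced the constant $\frac{1}{24}$ — I need to identify precisely which diagrams in $\mathcal{E}^-(n)\cup\mathcal{P}(n)$ have $\ge 3$ chords between components $n-1$ and $n$, and check that the $\dessin{0.9cm}{X3}$-contribution they produce reassembles, after smoothing, into the diagrams counted by $X_n$ at the crossing $\dessin{0.9cm}{X1}$. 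Carefully bookkeeping (i)–(iii) should give $X_{n+1}(L_+)-X_{n+1}(L_-) = X_n(L_0)$.

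By the induction hypothesis $X_n(L_0) = c_n(L_0)$ (note $L_0$ has $n-1$ components, which is why the base case $n=2$, i.e. $X_2(K)=c_2(K)-\frac1{24}$ from Proposition \ref{c_2}, must be handled separately and accounts for the shift: here $L_0$ always has $\ge 2$ components so the correction term never appears), hence $X_{n+1}$ and $c_{n+1}$ satisfy the same variation under a mixed crossing change. To finish, I would argue that repeated mixed crossing changes and isotopies reduce any $n$-component link to one that is a split union of an $(n-1)$-component link and a knot, or more simply to a split link; since both $X_{n+1}$ and $c_{n+1}$ vanish on split links ($c_{n+1}$ because the Conway polynomial of a split link is zero; $X_{n+1}$ because every diagram in $\mathcal{E}^-(n)\cup\mathcal{P}(n)$ is \emph{connected}, so by the Disjoint Union Lemma \ref{Factorisation} its coefficient involves a factor that vanishes — more precisely, a connected diagram cannot appear with nonzero coefficient in the Kontsevich integral of a split link across the splitting, as $\hat Z$ is multiplicative under split union), the result follows.

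\textbf{Main obstacle.} The delicate point is item (iii): keeping exact track of the $\dessin{0.9cm}{X3}$ correction terms and verifying that, in the general $n$ case, they do not introduce spurious constants or extra diagrams but precisely rebuild $X_n(L_0)$. In the $n=2$ case this manifested as the clean appearance of $\frac1{24} = C[\dessin{0.5cm}{D32\_2}]$-type contributions matching the correction in Proposition \ref{c_2}; for general $n$ one must show the analogous diagrams (those obtained from an essential diagram of degree $n-1$ by inserting three parallel chords at the crossing site) are exactly the ones that, upon smoothing, feed into $X_n$, which requires a careful combinatorial analysis of how the families $\mathcal{E}^-$ and $\mathcal{P}$ interact with multiple parallel mixed chords.
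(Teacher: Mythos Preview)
Your approach is essentially the paper's: induction on $n$ with base case Proposition~\ref{c_3}, the Skein Lemma at a mixed crossing, smoothing to invoke the induction hypothesis, and vanishing on split links to conclude.

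The point you are missing is that your ``main obstacle'' is not an obstacle at all. The paper's key observation is that in the induction step, where the diagrams live on $n+1\ge 3$ circles, no connected diagram in $\mathcal{E}^-(n+1)$ can have three or more mixed chords between a given pair of circles: by Lemma~\ref{Conditions combinatoires} the two circles together carry at most $4+2$ or $3+3$ legs, and three chords between them would exhaust all legs on at least one of them, disconnecting the remaining circles. The same holds for $\mathcal{P}(n+1)$ once $n\ge 3$, since its elements are single infections of diagrams in $\mathcal{E}^-(n)$ on $\ge 3$ circles and infection adds only one new mixed chord to a new circle. Hence the $\tfrac{1}{24}$-correction term in the Skein Lemma simply does not contribute, and the variation under a mixed crossing change reduces immediately to the single-chord term; smoothing that chord (your items (i)--(ii), justified by Lemma~\ref{unpeuutilequandmeme} and Remark~\ref{rem:lissage}) gives exactly the sum over $\mathcal{E}^-(n)\cup\mathcal{P}(n)$. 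There is no delicate bookkeeping of high-order terms to carry out. On a minor point of organization: the paper establishes invariance of $X_n$ by showing separately that $\sum_{D\in\mathcal{E}^-(n)}C[D]$ and $\sum_{D\in\mathcal{P}(n)}C[D]$ are each link invariants, the second following directly from Proposition~\ref{Inflation et invariance} rather than from a $4T$ case analysis.
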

\begin{remark}\label{rem:conway}
The case $n=1$ is somewhat particular, as it involves a correction term. Indeed, first note that $\mathcal{P}(1)=\emptyset$. We have $\mathcal{E}^-(1)=\{\dessin{0.5cm}{D21_2}\}$, and Proposition \ref{c_2} tells us that for a knot $K$, 
$$C_K\Big[\dessin{0.5cm}{D21_2}\Big] = c_2(K) - \frac{1}{24}.$$
Observe also that the case $n = 2$ is given by Proposition \ref{c_3}, since 
$\mathcal{E}^-(2)=\{\dessin{0.5cm}{D32_2};\dessin{0.5cm}{D32_4};\dessin{0.5cm}{D32_4b}\}$ and $\mathcal{P}(2)=\{\dessin{0.5cm}{D32_7};\dessin{0.5cm}{D32_7b}\}$. 
\end{remark}
\begin{proof}[Proof of Theorem \ref{c_n}]
Denote by  $X_n$ the left-hand term in the statement, which decomposes as 
$$ X_n = \sum_{D \in \mathcal{E}^-(n)} C_L[D] + \sum_{D \in \mathcal{P}(n)} C_L[D].$$
We first prove that $X_n$ indeed is an invariant. Actually, we show that each of the above two sums defines an invariant, by induction on $n$.  
The case $n=2$ is obtained by combining Remarks \ref{rem:inv_split} and \ref{rem:conway}.
Proposition \ref{Inflations et Essentiels} ensures that any element of $\mathcal{E}^-(n+1)$ is obtained by inflation on an element of $\mathcal{E}^-(n)$, up to permutation of the circle labels. 
A straightforward argument, using the Invariance Lemma \ref{Critere Invariance}, then shows that $\sum_{D \in \mathcal{E}^-(n+1)} C_L[D]$ is an invariant.\footnote{The argument is in the same spirit as in the proof of Lemma \ref{Factorisation fr}, and discusses the possible types of $4T$ relations depending on whether they involve chord(s) created during the inflation; we leave the details as an exercice to the reader.} 
On the other hand, Proposition \ref{Inflation et invariance} ensures that $\sum_{D \in \mathcal{P}(n+1)} C_L[D]$ is also an invariant.\\
Let us now prove the desired equality. This is again done by induction on $n$, using Proposition \ref{c_3} as initial step.  
Suppose that the equality holds for some $n\ge 2$, and consider a skein triple $(L_+,L_-,L_0)$ at a mixed crossing. 
Note that, for $n\ge 3$, 
there is no essential diagram with $\ge 3$ mixed chords between two given circles, by Proposition \ref{Inflations et Essentiels}. 
Hence by  the Skein Lemma \ref{Obvious lemma}, we have 
\begin{eqnarray*}
X_{n+1}(L_+) - X_{n+1}(L_-) 
 & = & \sum\limits_{D \in\mathcal{E}^-(n+1) \cup \mathcal{P}(n+1)} C[D] \Big(\dessin{0.9cm}{X1}\Big). 
\end{eqnarray*}
By smoothing the mixed chord in the above equality, we obtain 
\begin{eqnarray*}
X_{n+1}(L_+) - X_{n+1}(L_-) 
 & = & \sum\limits_{D \in\mathcal{E}^-(n) \cup \mathcal{P}(n)} C[D] \Big(\dessin{0.9cm}{X}\Big) \\
 & = & c_n(L_0) \\
 & = & c_{n+1}(L_+) - c_{n+1}(L_-). 
\end{eqnarray*}
Here, the fact that sum runs over $D \in \mathcal{E}^-(n) \cup \mathcal{P}(n)$ is ensured by Lemma \ref{unpeuutilequandmeme} and Remark \ref{rem:lissage}. 
The second equality is then given by the induction hypothesis, while the third equality is simply the skein relation for Conway coefficients. 
This proves that the invariants $X_{n+1}$ and $c_{n+1}$ have same variation formula under a mixed crossing change. 
The equality then follows from the fact that both invariants vanish on geometrically split links. 
\end{proof}

As mentioned in Remark \ref{rem:Oka}, Theorem \ref{c_n} fixes a mistake in \cite[Prop.~4.6~(2)]{Okamoto1997}. 
More precisely, \cite[Prop.~4.6~(2)]{Okamoto1997} treats the case $n=3$, but only considers the sum of coefficients given by 
$\mathcal{E}^-(n)$, and omits the correction terms given by $\mathcal{P}(n)$. 
Actually, considering only the terms given by $\mathcal{E}^-(n)$ yields the invariant $U_n$ introduced in Definition \ref{def:U_n} in terms of the Conway polynomial and linking numbers:  
\begin{proposition} \label{U_n}
Let $n\ge 2$. For any framed oriented $n$-component link $L$, we have 
$$\sum_{D \in \mathcal{E}^-(n)} C_L[D] = U_{n+1}(L).$$
\end{proposition}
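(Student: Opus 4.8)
The plan is to derive the statement from Theorem~\ref{c_n} by isolating the contribution of the correction diagrams $\mathcal{P}(n)$ and then running an induction on $n$ that mirrors the recursion defining $U_{n+1}$. Write $X^{\mathcal{E}}_n=\sum_{D\in\mathcal{E}^-(n)}C[D]$ and $X^{\mathcal{P}}_n=\sum_{D\in\mathcal{P}(n)}C[D]$; both are link invariants (this is established within the proof of Theorem~\ref{c_n}), and Theorem~\ref{c_n} reads $X^{\mathcal{E}}_n+X^{\mathcal{P}}_n=c_{n+1}$. On the other hand, Definition~\ref{def:U_n} gives $U_{n+1}(L)=c_{n+1}(L)-\sum_{i=1}^n\big(\sum_{j\neq i}l_{i,j}\big)U_n(L_{\check i})$. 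Hence the proposition is equivalent to the identity
$$X^{\mathcal{P}}_n(L)=\sum_{i=1}^{n}\Big(\sum_{j\neq i}l_{i,j}\Big)U_n(L_{\check i}),$$
which I would prove by induction on $n\ge 2$.

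The base case $n=2$ is already available: Remark~\ref{rem:inv_split} computes $X^{\mathcal{P}}_2(L)=l_{1,2}\big(c_2(K_1)+c_2(K_2)-\frac{1}{12}\big)$, and since $U_2(K)=c_2(K)-\frac{1}{24}$ this equals $l_{1,2}U_2(L_{\check 1})+l_{2,1}U_2(L_{\check 2})$, i.e. the $n=2$ instance of the displayed identity (equivalently, $X^{\mathcal{E}}_2=U_3$ by comparison with the explicit formula for $U_3$ following Definition~\ref{def:U_n}). For the inductive step, assume $X^{\mathcal{E}}_n=U_{n+1}$, so that the displayed identity holds in degree $n$, and aim to prove it in degree $n+1$. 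The heart of the argument is a clean combinatorial description of $\mathcal{P}(n+1)$: each of its diagrams is produced by a single infection on a $\ominus$-essential diagram on $n$ circles, so by Lemma~\ref{Conditions combinatoires} it has a unique circle carrying a single leg — the circle created by that infection — and a well-defined circle to which that circle's unique chord is attached. Recording the label $m$ of the new circle and the label $i\neq m$ of its neighbour, and using Remark~\ref{rem:lissage} (smoothing the infection chord recovers the $\ominus$-essential diagram that was infected), one partitions $\mathcal{P}(n+1)=\bigsqcup_{m}\bigsqcup_{i\neq m}\mathcal{P}_{m,i}$, where $\mathcal{P}_{m,i}$ is precisely the family of all infections on the $i$th circle of the diagrams of $\mathcal{E}^-(n)$, carried now by the circle set $\{1,\dots,n+1\}\setminus\{m\}$ with the new circle labeled $m$.

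With the blocks identified, Proposition~\ref{Inflation et invariance} applied to $\mathcal{D}=\mathcal{E}^-(n)$ — up to a harmless relabeling of circles, and using the induction hypothesis that the invariant attached to $\mathcal{E}^-(n)$ is $U_{n+1}$ — gives $\sum_{D\in\mathcal{P}_{m,i}}C_L[D]=l_{i,m}\,U_{n+1}(L_{\check m})$. Summing over all blocks and renaming indices yields $X^{\mathcal{P}}_{n+1}(L)=\sum_{i=1}^{n+1}\big(\sum_{j\neq i}l_{i,j}\big)U_{n+1}(L_{\check i})$, which by Definition~\ref{def:U_n} is exactly $c_{n+2}-U_{n+2}$; combined with Theorem~\ref{c_n} this gives $X^{\mathcal{E}}_{n+1}=c_{n+2}-X^{\mathcal{P}}_{n+1}=U_{n+2}$, completing the induction.

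The only genuinely delicate point is the bookkeeping in the partition of $\mathcal{P}(n+1)$: one must check that the label of the single-leg circle and of its neighbour are honest invariants of the diagram (which is where one needs that the infected diagram, being $\ominus$-essential, has all circles of valence $\ge 2$, via Lemma~\ref{Conditions combinatoires}), that smoothing is the exact inverse of infection (Remark~\ref{rem:lissage}), and — crucially — that $\mathcal{P}(n+1)$ is closed under relabeling circles, so that every pair $(m,i)$ genuinely occurs; this last fact is implicit in the definitions and is already what makes the $n=2$ answer symmetric in Remark~\ref{rem:inv_split}. Once the blocks are correctly matched with the infection families, the rest is a direct application of Proposition~\ref{Inflation et invariance} and the induction hypothesis.
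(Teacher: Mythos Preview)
Your proof is correct and follows essentially the same route as the paper: both use Theorem~\ref{c_n} to reduce the claim to computing $\sum_{D\in\mathcal{P}(n)}C_L[D]$, partition $\mathcal{P}(n)$ according to the label of the unique single-leg circle, and then invoke Proposition~\ref{Inflation et invariance} together with the induction hypothesis to identify each block as $\sum_{j\neq i}l_{i,j}\,U_n(L_{\check i})$. Your version is slightly more explicit in also recording the neighbour label $i$ in the partition $\mathcal{P}_{m,i}$ and in spelling out the bookkeeping (uniqueness of the single-leg circle via Lemma~\ref{Conditions combinatoires}, inversion of infection via Remark~\ref{rem:lissage}), but this is exactly what the paper does in one line when it writes $\sum_{D\in\mathcal{P}_i(n)}C[D]=\sum_{j\neq i}U_n\,l_{i,j}$.
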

\begin{proof}[Proof]
The fact that $\sum_{D \in \mathcal{E}^-(n)} C_L[D]$ defines an invariant for all $n$ was already discussed in the previous proof. 
The equality is proved by induction. The case $n=2$ is given by Proposition \ref{c_2}, and by Theorem \ref{c_n} we have that 
\begin{eqnarray*}
\sum_{D \in \mathcal{E}^-(n)} C[D]  
   & = & c_{n+1} - \sum_{D \in \mathcal{P}(n)} C[D] \\
   & = & c_{n+1} - \sum_{i=1}^n \sum_{D \in \mathcal{P}_i(n)} C[D],  \\
\end{eqnarray*}
where $\mathcal{P}_i(n)$ denote all elements of $\mathcal{P}(n)$ where the unique circle with a single leg 
(coming from an infection on some diagram of $\mathcal{E}^-(n-1)$) is  labeled by $i$. 
Then Proposition \ref{Inflation et invariance} and the induction hypothesis give that 
$\sum_{D \in \mathcal{P}_i(n)} C[D] = \sum_{i\neq j} U_{n} l_{i,j}$, which concludes the proof. 
\end{proof}

\section{The $\mu_n$ invariants}\label{sec:mu_n}

In the rest of this paper, we will make use of the following. 
\begin{notation}
For a chord diagram $D$ on $n$ circles, we denote by $\iota_\Theta(D)$ the rational coefficient such that 
  $$ \left(\iota_1(D \# \nu^n)\right)_1=\iota_\Theta(D)\times \dessin{0.5cm}{T}. $$
For a framed oriented $n$-component link $L$, we denote by 
  $$  \mathcal{C}_L[D] := C_L[D] \times \iota_\Theta(D). $$
In other words, $\mathcal{C}_L[D]$ is the contribution to $\Big( \iota_1(\check{Z}(L)) \Big)_1$ of a diagram $D$ in the Kontsevich integral $\hat{Z}(L)$, 
and $\iota_\Theta(D)$ is the part of this contribution that comes from the $\iota_1$ map (and the normalization by copies of $\nu$'s) on this diagram, while $C_L[D]$ is the part that comes from the Kontsevich integral itself. 
\end{notation}

\begin{ex} \label{footnote}
It follows from the definitions of $\check{Z}$ and $\iota_1$ that
$\iota_\Theta \Big(\dessin{0.5cm}{D21_1}\Big)  = \frac{1}{6}$, $\iota_\Theta \Big(\dessin{0.5cm}{D21_2}\Big) = -\frac{1}{3}$ 
and $\iota_\Theta \Big(\dessin{0.5cm}{D01}\Big) = \frac{1}{48}$. 
In this latter computation, the coefficient of $\dessin{0.5cm}{T}$ comes from the copy of $\nu$ added to the circle component. 
We stress that this type of contribution to the degree $1$ part of the LMO invariants, coming from the renormalization $\check{Z}$ of the Kontsevich integral,  only occurs with the trivial diagram $\dessin{0.5cm}{D01}$ on a single circle. 
In other words, for \emph{any} chord diagram $D\neq \dessin{0.5cm}{D01}$, we have  that 
$\left(\iota_1(D)\right)_1=\iota_\Theta(D)\times \dessin{0.5cm}{T}$. 
\end{ex}

The following is the main ingredient in our surgery formula for the Casson-Walker-Lescop invariant. 
Recall that $\mathcal{E}(n)$ denotes the set of all essential diagrams on $n$ circles. 
\begin{definition}\label{def:mun}
For all integers $n\ge 1$, let $\mu_n$ be the framed oriented $n$-component link invariant defined by  
 $$ \mu_1(K) = 2 \sum_{D\in \{\dessin{0.3cm}{D01}\}\cup \mathcal{E}(1)} \mathcal{C}_L[D] $$
and for all $n\ge 2$, 
 $$ \mu_n(L) = 2 \sum_{D\in \mathcal{E}(n)} \mathcal{C}_L[D]. $$
\end{definition}

Our task is now to make this definition completely explicit. 

\begin{remark}
The fact that the above formula indeed defines a link invariant is not completely obvious (in particular, this is not a mere application of the Invariance Lemma \ref{Critere Invariance}); we postpone the justification to Remark \ref{rem:jesuisinvariant} at the end of this section. 
\end{remark}

\subsection{Cases $n=1$ and $2$}

We listed in Example \ref{ex:essentiels} all essential diagrams on $1$ or $2$ circles, and we can thus describe $\mu_1$ and $\mu_2$ explicitly. 
\begin{lemma}\label{ex:mu1}
For a framed oriented knot $K$, we have 
 $$ \mu_1(K) = \frac{1}{24} fr(K)^2 - c_2(K) + \frac{1}{12}.$$ 
\end{lemma}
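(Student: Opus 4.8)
The plan is to compute $\mu_1(K)$ directly from its definition,
$$ \mu_1(K) = 2 \sum_{D\in \{\dessin{0.3cm}{D01}\}\cup \mathcal{E}(1)} \mathcal{C}_K[D], $$
by enumerating the relevant diagrams and invoking the formulas already established. From Example \ref{ex:essentiels} we know that $\mathcal{E}(1) = \mathcal{E}^+(1)\cup\mathcal{E}^-(1) = \{\dessin{0.5cm}{D21_1}\}\cup\{\dessin{0.5cm}{D21_2}\}$, so the index set consists of exactly three diagrams: $\dessin{0.5cm}{D01}$, $\dessin{0.5cm}{D21_1}$ and $\dessin{0.5cm}{D21_2}$. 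Thus
$$ \mu_1(K) = 2\Big( \mathcal{C}_K\big[\dessin{0.5cm}{D01}\big] + \mathcal{C}_K\big[\dessin{0.5cm}{D21_1}\big] + \mathcal{C}_K\big[\dessin{0.5cm}{D21_2}\big]\Big). $$

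Next I would plug in the two ingredients in each $\mathcal{C}_K[D] = C_K[D]\cdot \iota_\Theta(D)$. For the $\iota_\Theta$ factors, Example \ref{footnote} gives $\iota_\Theta\big(\dessin{0.5cm}{D01}\big) = \frac{1}{48}$, $\iota_\Theta\big(\dessin{0.5cm}{D21_1}\big) = \frac{1}{6}$, and $\iota_\Theta\big(\dessin{0.5cm}{D21_2}\big) = -\frac{1}{3}$. For the $C_K[D]$ factors: the trivial formula $C_K\big[\dessin{0.5cm}{D01}\big] = 1$ was recalled just before Lemma \ref{lem:D11}, and Proposition \ref{c_2} gives $C_K\big[\dessin{0.5cm}{D21_1}\big] = \frac{1}{8}fr(K)^2 + \frac{1}{24} - c_2(K)$ and $C_K\big[\dessin{0.5cm}{D21_2}\big] = c_2(K) - \frac{1}{24}$. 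Substituting, $\mu_1(K)$ equals
$$ 2\left( \frac{1}{48} + \frac{1}{6}\Big(\tfrac{1}{8}fr(K)^2 + \tfrac{1}{24} - c_2(K)\Big) - \frac{1}{3}\Big(c_2(K) - \tfrac{1}{24}\Big)\right). $$

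Then it only remains to simplify this rational expression. The $fr(K)^2$ term contributes $2\cdot\frac{1}{6}\cdot\frac{1}{8} = \frac{1}{24}$; the $c_2(K)$ terms contribute $2(-\frac{1}{6} - \frac{1}{3}) = -1$; and the constant terms contribute $2(\frac{1}{48} + \frac{1}{144} + \frac{1}{72})$, which should collapse to $\frac{1}{12}$. This yields $\mu_1(K) = \frac{1}{24}fr(K)^2 - c_2(K) + \frac{1}{12}$, as claimed. There is essentially no obstacle here: the proof is a bookkeeping exercise assembling Proposition \ref{c_2}, the values of $\iota_\Theta$ from Example \ref{footnote}, and Definition \ref{def:mun}; the only thing to be careful about is the arithmetic of the constant term (common denominator $144$: $\frac{3}{144} + \frac{1}{144} + \frac{2}{144} = \frac{6}{144} = \frac{1}{24}$, doubled gives $\frac{1}{12}$), and the implicit fact — noted in Example \ref{footnote} — that for diagrams other than $\dessin{0.5cm}{D01}$ the $\nu$-renormalization contributes nothing extra to $\iota_\Theta$, so that $\mathcal{C}_K[D]$ is exactly the stated product.
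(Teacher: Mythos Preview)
Your proof is correct and takes essentially the same approach as the paper: both compute $\mu_1(K)$ directly from Definition~\ref{def:mun} by enumerating $\{\dessin{0.3cm}{D01}\}\cup\mathcal{E}(1)$, plugging in the $\iota_\Theta$ values from Example~\ref{footnote} and the coefficients from Proposition~\ref{c_2}, and simplifying. The only cosmetic difference is that the paper regroups the two degree-$2$ terms as $\frac{1}{3}\big(C_K[\dessin{0.3cm}{D21_1}]+C_K[\dessin{0.3cm}{D21_2}]\big)-C_K[\dessin{0.3cm}{D21_2}]$ and passes through $U_2(K)$, whereas you substitute both formulas from Proposition~\ref{c_2} separately and simplify directly.
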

\begin{proof}[Proof]
We know that $\mathcal{E}^+(1) = \{ \dessin{0.5cm}{D21_1} \}$ and $\mathcal{E}^-(1) = \{ \dessin{0.5cm}{D21_2} \}$, and moreover we saw in Example \ref{footnote} that 
$\iota_\Theta \Big(\dessin{0.5cm}{D21_1}\Big)  = \frac{1}{6}$, $\iota_\Theta \Big(\dessin{0.5cm}{D21_2}\Big) = -\frac{1}{3}$ 
and $\iota_\Theta \Big(\dessin{0.5cm}{D01}\Big) = \frac{1}{48}$. 
Hence by Proposition \ref{c_2}, the invariant $\mu_1$ for a framed knot $K$ is given by 
\begin{eqnarray*}
 \mu_1(K) & = & 2\Big( \frac{1}{48} C_K\Big[\dessin{0.5cm}{D01}\Big] + \frac{1}{6} C_K\Big[\dessin{0.5cm}{D21_1}\Big] - \frac{1}{3} C_K\Big[\dessin{0.5cm}{D21_2}\Big]\Big) \\
  & = &  \frac{1}{24} + \frac{1}{3} \left(C_K\Big[\dessin{0.5cm}{D21_1}\Big]+C_K\Big[\dessin{0.5cm}{D21_2}\Big]\right) 
          - C_K\Big[\dessin{0.5cm}{D21_2}\Big] \\
  & = & \frac{1}{24} + \frac{1}{24} fr(K)^2 - U_2(K),
\end{eqnarray*}
where the last equality uses Propositions \ref{c_2} and \ref{U_n}. 
The result then follows from the definition of $U_2$ (Definition \ref{def:U_n}). 
\end{proof}
\begin{lemma}\label{ex:mu2}
For a framed oriented $2$-component link $L=K_1\cup K_2$, we have 
 $$ \mu_2(L)=\frac{1}{12} l_{1,2}^3 + \frac{f_1 + f_2}{12} l_{1,2}^2 - c_3(L) + l_{1,2}\Big( c_2(K_1) + c_2(K_2) - \frac{1}{12} \Big). $$ 
\end{lemma}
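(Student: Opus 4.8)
The plan is to mimic the proof of Lemma \ref{ex:mu1}, using the explicit list of essential diagrams on two circles from Example \ref{ex:essentiels} together with the low-degree identifications of Section 3. Recall that $\mathcal{E}(2)=\mathcal{E}^+(2)\cup\mathcal{E}^-(2)$, where $\mathcal{E}^+(2)=\{\dessin{0.5cm}{D32_1};\dessin{0.5cm}{D32_3};\dessin{0.5cm}{D32_3b}\}$ all close into $\tfrac16\dessin{0.5cm}{T}$, and $\mathcal{E}^-(2)=\{\dessin{0.5cm}{D32_2};\dessin{0.5cm}{D32_4};\dessin{0.5cm}{D32_4b}\}$ all close into $-\tfrac13\dessin{0.5cm}{T}$. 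Hence, by the definition of $\mu_2$ and of $\iota_\Theta$,
$$ \mu_2(L) = 2\Big( \tfrac16 \big(C_L[\dessin{0.5cm}{D32_1}]+C_L[\dessin{0.5cm}{D32_3}]+C_L[\dessin{0.5cm}{D32_3b}]\big) - \tfrac13 \big(C_L[\dessin{0.5cm}{D32_2}]+C_L[\dessin{0.5cm}{D32_4}]+C_L[\dessin{0.5cm}{D32_4b}]\big)\Big). $$

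First I would rewrite this by grouping the $\oplus$- and $\ominus$-sums cleverly so each piece matches a known identity. The combination $C_L[\dessin{0.5cm}{D32_1}]+C_L[\dessin{0.5cm}{D32_2}]$ equals $\tfrac16 l_{1,2}^3$ by Proposition \ref{prop:lk3}. The combination $C_L[\dessin{0.5cm}{D32_3}]+C_L[\dessin{0.5cm}{D32_4}]$ equals $\tfrac14 f_1 l_{1,2}^2$ by equation (\ref{rem:deg3}), and by the mirror-symmetric statement (swapping the roles of the two circles) $C_L[\dessin{0.5cm}{D32_3b}]+C_L[\dessin{0.5cm}{D32_4b}]=\tfrac14 f_2 l_{1,2}^2$. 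Finally the combination $C_L[\dessin{0.5cm}{D32_2}]+C_L[\dessin{0.5cm}{D32_4}]+C_L[\dessin{0.5cm}{D32_4b}]=\sum_{D\in\mathcal{E}^-(2)}C_L[D]=U_3(L)$ by Proposition \ref{U_n}, and from Definition \ref{def:U_n} (or directly from Proposition \ref{c_3} and Remark \ref{rem:inv_split}) this is $c_3(L)-l_{1,2}\big(c_2(K_1)+c_2(K_2)-\tfrac1{12}\big)$.

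The one arithmetical point to handle with care is matching coefficients: write
$$ \mu_2(L) = \tfrac13\big(C_L[\dessin{0.5cm}{D32_1}]+C_L[\dessin{0.5cm}{D32_2}]\big) + \tfrac13\big(C_L[\dessin{0.5cm}{D32_3}]+C_L[\dessin{0.5cm}{D32_4}]\big) + \tfrac13\big(C_L[\dessin{0.5cm}{D32_3b}]+C_L[\dessin{0.5cm}{D32_4b}]\big) - \big(C_L[\dessin{0.5cm}{D32_2}]+C_L[\dessin{0.5cm}{D32_4}]+C_L[\dessin{0.5cm}{D32_4b}]\big), $$
since $2(\tfrac16)=\tfrac13$ and $-2(\tfrac13) = \tfrac13 - 1$ splits the $\ominus$-terms into a $+\tfrac13$ contribution (absorbed into the first three bracketed pairs) plus a $-1$ contribution (the last bracket). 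Substituting the four identities above then gives
$$ \mu_2(L) = \tfrac13\cdot\tfrac16 l_{1,2}^3 + \tfrac13\cdot\tfrac14(f_1+f_2)l_{1,2}^2 - \Big(c_3(L)-l_{1,2}\big(c_2(K_1)+c_2(K_2)-\tfrac1{12}\big)\Big) = \tfrac1{12}l_{1,2}^3 + \tfrac{f_1+f_2}{12}l_{1,2}^2 - c_3(L) + l_{1,2}\big(c_2(K_1)+c_2(K_2)-\tfrac1{12}\big), $$
which is the claimed formula (note $\tfrac13\cdot\tfrac14=\tfrac1{12}$, not $\tfrac1{12}$—here it is indeed $\tfrac1{12}$). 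The main obstacle, such as it is, is purely bookkeeping: making sure the grouping of the six diagram coefficients into pairs and the split $-2(\tfrac13)=\tfrac13-1$ are done consistently, and that the "mirror" version of (\ref{rem:deg3}) is legitimate — which it is, since relabeling the two components is a symmetry of all the invariants involved. No genuinely new idea beyond the ingredients already established is needed.
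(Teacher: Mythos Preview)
Your overall strategy is exactly the one the paper uses: list the six essential diagrams on two circles, write $\mu_2$ as a weighted sum of their Kontsevich coefficients, regroup into the three pairs $(D_{32\_1},D_{32\_2})$, $(D_{32\_3},D_{32\_4})$, $(D_{32\_3b},D_{32\_4b})$ plus the $\mathcal{E}^-(2)$ sum, and then invoke Proposition~\ref{prop:lk3}, equation~(\ref{rem:deg3}), and Proposition~\ref{U_n}. That part is fine.

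However, there is a genuine error in your weights. You took $\iota_\Theta\big(\dessin{0.5cm}{D32_1}\big)=\tfrac16$ and $\iota_\Theta\big(\dessin{0.5cm}{D32_2}\big)=-\tfrac13$, presumably from the sentence in Example~\ref{ex:essentiels}. But $\dessin{0.5cm}{D32_1}$ and $\dessin{0.5cm}{D32_2}$ each have \emph{three} legs on both circles, so under $\iota_1$ each circle is replaced by $T_3=\tfrac12\,Y$, giving a factor $\tfrac12\cdot\tfrac12=\tfrac14$; the resulting trivalent graph is $\pm\Theta$. Hence $\iota_\Theta\big(\dessin{0.5cm}{D32_1}\big)=\tfrac14$ and $\iota_\Theta\big(\dessin{0.5cm}{D32_2}\big)=-\tfrac14$, while $\iota_\Theta\big(\dessin{0.5cm}{D32_3}\big)=\tfrac16$ and $\iota_\Theta\big(\dessin{0.5cm}{D32_4}\big)=-\tfrac13$ (these have a $4$-leg circle, so $T_4$ enters). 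With the correct weights the pair $\big(C_L[\dessin{0.5cm}{D32_1}]+C_L[\dessin{0.5cm}{D32_2}]\big)$ carries coefficient $\tfrac12$, not $\tfrac13$, and $\tfrac12\cdot\tfrac16\,l_{1,2}^3=\tfrac1{12}\,l_{1,2}^3$ as desired.

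Your proof only reaches the right answer because of a second, compensating arithmetic slip: you wrote $\tfrac13\cdot\tfrac16=\tfrac1{12}$, but of course $\tfrac13\cdot\tfrac16=\tfrac1{18}$. Carried out correctly with your stated weights, your computation would yield $\tfrac1{18}\,l_{1,2}^3$ in place of $\tfrac1{12}\,l_{1,2}^3$. So the argument needs to be repaired by using the correct values of $\iota_\Theta$ on the two ``three-legs-per-circle'' diagrams; once that is done, the rest of your regrouping and the appeal to Proposition~\ref{prop:lk3}, (\ref{rem:deg3}) and Proposition~\ref{U_n} goes through exactly as in the paper.
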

\begin{proof}[Proof]
We have, up to permutation of the circle labels,  
$$\mathcal{E}^+(2) = \{ \dessin{0.5cm}{D32_1} \, ; \, \dessin{0.5cm}{D32_3} \}\, \textrm{ and } \,\mathcal{E}^-(2) = \{ \dessin{0.5cm}{D32_2}\,;\,\dessin{0.5cm}{D32_4} \}.$$
Moreover, we have that 
$\iota_\Theta \Big(\dessin{0.5cm}{D32_1}\Big) = \frac{1}{4}$,  
$\iota_\Theta \Big(\dessin{0.5cm}{D32_3}\Big) = \frac{1}{6}$, 
$\iota_\Theta \Big(\dessin{0.5cm}{D32_2}\Big) = -\frac{1}{4}$ and  $\iota_\Theta \Big(\dessin{0.5cm}{D32_4}\Big) = -\frac{1}{3}$.   
Thus, for a $2$-component link $L=K_1\cup K_2$, we have 
\begin{eqnarray*}
 \mu_2(L) & = & 2\Big( \frac{1}{4} C_L\Big[\dessin{0.5cm}{D32_1}\Big]\!\! -\! \frac{1}{4} C_L\Big[\dessin{0.5cm}{D32_2}\Big]\!\!
          +\! \frac{1}{6} C_L\Big[\dessin{0.5cm}{D32_3}\Big]\!\! -\! \frac{1}{3} C_L\Big[\dessin{0.5cm}{D32_4}\Big]\Big) \\
  & = & \frac{1}{2} \left(C_L\Big[\dessin{0.5cm}{D32_1}\Big]\!\! +\!C_L\Big[\dessin{0.5cm}{D32_2}\Big]\right) \!
            + \! \frac{1}{3} \left(C_L\Big[\dessin{0.5cm}{D32_3}\Big]\!\!+\!C_L\Big[\dessin{0.5cm}{D32_4}\Big]\right) \\
  &    &   - \left( C_L\Big[\dessin{0.5cm}{D32_2}\Big] \!\!+\! C_L\Big[\dessin{0.5cm}{D32_4}\Big]\right) \\
  & = & \frac{1}{12} l_{1,2}^3 + \frac{f_1 + f_2}{12} l_{1,2}^2 - U_3(L). 
\end{eqnarray*}
Here, the final equality uses Proposition \ref{prop:lk3}, the formula given in (\ref{rem:deg3}), and Proposition \ref{U_n}. 
Hence from Definition \ref{def:U_n} we obtain the desired formula.
\end{proof}
\subsection{General case}\label{ex:muk}
Let us now investigate the invariant $\mu_n$ for $n\ge 3$. 

As pointed out in Remark \ref{rem:seeds}, all essential diagrams are obtained by iterated inflations from a few basic diagrams, 
namely either $\dessin{0.65cm}{D21_1}$ and $\dessin{0.65cm}{D32_1}$ for  $\oplus$-essential diagram,  
and $\dessin{0.65cm}{D21_2}$ and $\dessin{0.65cm}{D32_2}$ for $\ominus$-essential ones. 
Note that, in each of these four diagrams, the role of all chords is completely symmetric. 
We can thus define four families of \emph{unordered} chord diagrams,\footnote{A chord diagram is unordered if we do not specify an order on the circle components. }
$D_+(a,b)$, $ D_-(a,b)$, $D_+(a,b,c)$ and $D_-(a,b,c)$,  
$a,b,c\in \mathbb{N}$, as follows.   
\begin{definition}\label{def:D}
For integers $a,b,c$, such that $a\ge b\ge c\ge 0$,  

$\bullet$ $D_+(a,b)$ is the unordered $\oplus$-essential diagram on $a+b+1$ circles obtained from $\dessin{0.65cm}{D21_1}$ by $a$ successive inflations on one chord, 
and $b$ inflations on the other chord, 

$\bullet$ $D_-(a,b)$ is the unordered $\ominus$-essential diagram on $a+b+1$ circles obtained in the same way from $\dessin{0.65cm}{D21_2}$, 

$\bullet$ $D_+(a,b,c)$ is the unordered $\oplus$-essential diagram on $a+b+c+2$ circles obtained from $\dessin{0.65cm}{D32_1}$ by respectively $a$, 
$b$ and $c$ successive inflations on each chord,  

$\bullet$ $D_-(a,b,c)$ is the unordered $\ominus$-essential diagram on $a+b+c+2$ obtained in the same way from $\dessin{0.65cm}{D32_2}$.
\end{definition}
Some examples are given below: 
\begin{center}
 \includegraphics[scale=1.2]{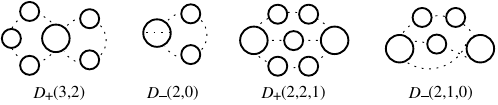}
\end{center}

We denote respectively by $\{D_+(a,b)\}$ and $\{D_-(a,b)\}$ the set of all chord diagrams obtained by ordering the circles of $D_+(a,b)$ and $D_-(a,b)$ from $1$ to $a+b+1$ 
in all possible ways. We also define $\{D_+(a,b,c)\}$ and $\{D_-(a,b,c)\}$ in a similar way.  Remark \ref{rem:seeds} can then be rephrased as the equality
\begin{equation}\label{eq:split}
\mathcal{E}^\pm(n+2)=\bigcup_{\substack{a+b=n+1 \\ a\ge b}} \{D_\pm(a,b)\} \cup \bigcup_{\substack{a+b+c=n \\ a\ge b\ge c}} \{D_\pm(a,b,c)\}.
\end{equation}
We also set 
$$ \mathcal{D}(a,b) =  \{D_+(a,b)\} \cup \{D_-(a,b)\} \quad\textrm{and} \quad \mathcal{D}(a,b,c) =  \{D_+(a,b,c)\} \cup \{D_-(a,b,c)\}. $$

We can identify explicitly the coefficients of such diagrams in the Kontsevich integral. This uses the following notation. 
\begin{notation}
Given two integers $i$ and $j$, and a set $I=\{i_1,\cdots,i_k\}$ of $k$ pairwise distinct integers, all different from $i$ and $j$. We set 
 $$ \mathcal{L}_{i,j,I} := \sum_{\sigma\in S_k} l_{i,i_{\sigma(1)}}\times l_{i_{\sigma(1)},i_{\sigma(2)}}\times \cdots \times l_{i_{\sigma(k-1)},i_{\sigma(k)}}\times l_{i_{\sigma(k)},j}.$$
We abbreviate $\mathcal{L}_{i,I}=\mathcal{L}_{i,i,I}$, and use the convention $\mathcal{L}_{i,j,\emptyset}=l_{i,j}$ if $i\neq j$, and $\mathcal{L}_{i,\emptyset}=fr_i$. 
\end{notation}
\begin{theorem}\label{thm:coeffs essentiels}
For all $a,b\in \mathbb{N}$ such that $a>0$ and $a\ge b$, 
$$
 \sum_{D\in \mathcal{D}(a,b)} C_L[D] = \frac{1}{4} \sum_{i=1}^{n} \sum_{\mathcal{I}_i(a,b)} \mathcal{L}_{i,I}  \mathcal{L}_{i,J} 
$$
where we sum over the set $\mathcal{I}_i(a,b)$ of all partitions $I\cup J=\{1,\cdots,a+b+1\}\setminus \{i\}$ such that $\vert I\vert=a$ and $\vert J\vert = b$. 

For all $a,b,c\in \mathbb{N}$ such that $a>0$ and $a\ge b\ge c$, 
$$
{ \everymath={\displaystyle}
 \sum_{D\in \mathcal{D}(a,b,c)} C_L[D] = \left\{
\begin{array}{ll}
\frac{1}{2} \sum_{1\le i< j\le n} l_{i,j}^2 \mathcal{L}_{i,j,\{1,\cdots,a+2\}\setminus \{i,j\}} &  \textrm{ if $b=c=0$, }\\[0.2cm]
\sum_{1\le i< j\le n} \sum_{\mathcal{I}_{i,j}(a,b,c)} \mathcal{L}_{i,j,I}  \mathcal{L}_{i,j,J}  \mathcal{L}_{i,j,K} & \textrm{ otherwise,}
\end{array} \right. 
}
$$
where the last sum is over the set $\mathcal{I}_{i,j}(a,b,c)$ of all partitions $I\cup J\cup K=\{1,\cdots,a+b+c+2\}\setminus \{i,j\}$ such that $\vert I\vert=a$,  $\vert J\vert = b$ and $\vert K\vert =c$. 
\end{theorem}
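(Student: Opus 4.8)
The plan is to prove the two displayed formulas of Theorem~\ref{thm:coeffs essentiels} by induction on the number of circles, using the inflation-based recursive structure of essential diagrams together with the factorization result Proposition~\ref{Factorisation gonflage} as the engine that governs each inductive step. The base cases are already available: for $D_\pm(0,0)$ the sum $\sum_{D\in\mathcal{D}(0,0)}C_L[D]=C_K[\dessin{0.5cm}{D21_1}]+C_K[\dessin{0.5cm}{D21_2}]=\frac18 fr(K)^2+\frac{1}{24}-c_2(K)+c_2(K)-\frac{1}{24}=\frac18 fr(K)^2=\frac14\mathcal{L}_{i,\emptyset}\mathcal{L}_{i,\emptyset}$ by Proposition~\ref{c_2} (here $n=1$, $i=1$), and similarly the $b=c=0$ base case $\sum_{D\in\mathcal{D}(0,0,0)}C_L[D]=C_L[\dessin{0.5cm}{D32_1}]+C_L[\dessin{0.5cm}{D32_2}]=\frac16 l_{1,2}^3$ comes from Proposition~\ref{prop:lk3}; and the first nonzero $b=1$ three-chord case reduces to Lemma~\ref{Invariant particulier}.

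The inductive step for $\sum_{D\in\mathcal{D}(a,b)}C_L[D]$ goes as follows. Any diagram in $\mathcal{E}^\pm(a+b+1)$ with $a,b\ge 1$ is obtained by inflation, on \emph{either} of the two chains emanating from the distinguished circle, of a diagram in $\mathcal{E}^\pm(a+b)$; by \eqref{eq:split} the set $\mathcal{D}(a,b)$ with circles re-ordered is exactly the set of all inflations of the diagrams in $\mathcal{D}(a-1,b)$ (equivalently $\mathcal{D}(a,b-1)$). Applying Proposition~\ref{Factorisation gonflage} with $\mathcal{D}$ taken to be (an ordered copy of) $\mathcal{D}(a-1,b)$ and the new circle labelled $n+1$, we get $\sum_{D\in\mathcal{D}(a,b)}C_L[D]=\sum_{i,j}l_{i,n+1}l_{j,n+1}X(L_{\check{n+1}})+\tfrac12\sum_i l_{i,n+1}^2 X(L_{\check{n+1}})$, where the pair $(i,j)$ runs over the two ``ends'' at which an inflated chord may be attached and $X=\sum_{D\in\mathcal{D}(a-1,b)}C[D]$; by the induction hypothesis $X(L_{\check{n+1}})$ is the claimed polynomial in linking numbers. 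Expanding and bookkeeping which end of which chain the new circle $n+1$ sits on, one sees that the effect on the $\mathcal{L}$-products is precisely to extend one of the chains $I$ or $J$ by the new index $n+1$; summing over all ordered positions of $n+1$ and all choices of which chain to extend reproduces $\frac14\sum_i\sum_{\mathcal{I}_i(a,b)}\mathcal{L}_{i,I}\mathcal{L}_{i,J}$ after absorbing the combinatorial $\frac12$-factor from the $i=j$ case of Proposition~\ref{Factorisation gonflage} (which corresponds to appending $n+1$ to the same chain at either of its two ends, hence a symmetry factor). The three-chord case is handled identically, now inflating a diagram in $\mathcal{D}(a-1,b,c)$ (or $\mathcal{D}(a,b-1,c)$, or $\mathcal{D}(a,b,c-1)$), with the new circle extending one of the three chains $I$, $J$, $K$; the $b=c=0$ case requires the $i=j$ clause of Proposition~\ref{Factorisation gonflage} at the final step, which is exactly why the coefficient $\tfrac12$ rather than $1$ appears there.

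The main obstacle — and the part that needs genuine care rather than routine checking — is the combinatorial matching between, on the one hand, the recursive application of the $i\ne j$ and $i=j$ cases of Proposition~\ref{Factorisation gonflage}, and on the other hand, the symmetrized sum $\mathcal{L}_{i,I}=\sum_{\sigma\in S_k}\prod l_{\cdots}$ over orderings of each chain. One must verify that: (a) inflating on one specific chord of a chain diagram corresponds to appending the new circle at a specific \emph{end} of the corresponding chain; (b) the two chords available at the new $(n+1)$st circle after an inflation correspond to the two ends of that chain, so that over all inflations and all re-labelings the induction builds every ordered product $l_{i,i_{\sigma(1)}}\cdots l_{i_{\sigma(k)},i}$ exactly once; and (c) the symmetry factor $\tfrac12$ in the $i=j$ clause is not double-counting but rather exactly accounts for the fact that appending to a chain ``at the left end'' versus ``at the right end'' of a symmetric chain yields the same unordered diagram. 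A clean way to organize this is to set up, for each $D_\pm(a,b)$ (resp.\ $D_\pm(a,b,c)$), an explicit bijection between its ordered representatives and pairs (ordered partition of the index set into parts of sizes $a,b$, choice of $\pm$) modulo the chain symmetries, check that inflation corresponds on the nose to the evident ``insert the new index at an endpoint'' map, and then invoke Proposition~\ref{Factorisation gonflage} one inflation at a time; the rest is then a transparent, if slightly tedious, expansion, and it suffices to present it for one representative family (say $D_-(a,b)$) since the others are formally identical.
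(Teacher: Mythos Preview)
Your inductive step contains a genuine gap: Proposition~\ref{Factorisation gonflage} does \emph{not} take $\mathcal{D}(a-1,b)$ to $\mathcal{D}(a,b)$. The operation in Proposition~\ref{Factorisation gonflage} \emph{adds} an inflated chord to each diagram in $\mathcal{D}$, i.e.\ it adds one new circle and \emph{two} new chords. Inflation of an existing chord, by contrast, adds one circle and only \emph{one} chord (it replaces a chord by two). Concretely, a diagram in $\mathcal{D}(a-1,b)$ has $a+b$ circles and $a+b+1$ chords; applying Proposition~\ref{Factorisation gonflage} produces diagrams with $a+b+1$ circles and $a+b+3$ chords, whereas elements of $\mathcal{D}(a,b)$ have $a+b+1$ circles and $a+b+2$ chords. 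So the set $\mathcal{D}_G$ you obtain is simply not $\mathcal{D}(a,b)$, and the displayed identity $\sum_{D\in\mathcal{D}(a,b)}C_L[D]=\sum_{i,j}l_{i,n+1}l_{j,n+1}X(L_{\check{n+1}})+\cdots$ is false as written. (A symptom already appears in your base case: you compute $\tfrac18 fr(K)^2$ for $a=b=0$ but then equate it with $\tfrac14\mathcal{L}_{i,\emptyset}\mathcal{L}_{i,\emptyset}=\tfrac14 fr(K)^2$; the formula of the theorem is only stated for $a>0$ precisely because the $\tfrac14$ normalization does not extend to $a=b=0$.)

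The paper avoids this by \emph{not} inducting on lower essential families. Instead it fixes a labeling and builds the whole family $\{D_+(a,b),D_-(a,b)\}$ directly from a seed diagram with known coefficient ($\dessin{0.5cm}{D11}$ when $b=0$, $\dessin{0.5cm}{D01}$ when $b>0$, and $\dessin{0.5cm}{D22}$ for $D_\pm(a,0,0)$), by inserting an ``order~$k$ chain'' one step at a time: an infection (Proposition~\ref{Inflation et invariance}), then $k-1$ further infections on the newest circle, and finally one mixed chord back to the starting circle (Proposition~\ref{Factorisation lk}); when the chain has length~$1$ this collapses to a single application of Proposition~\ref{Factorisation gonflage}. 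Each of these steps is a genuine factorization result, and the sequence produces exactly the pair $\{D_+,D_-\}$ because the final ``closing'' chord can land on either side of the existing legs on the seed circle. Summing over labelings then gives the $\mathcal{L}$-formulas. If you want to salvage your inductive scheme, you would need to first \emph{remove} the terminal chord of one chain in each diagram of $\mathcal{D}(a-1,b)$ (getting a non-essential auxiliary family), then infect on the now-pendant circle, then re-insert the closing chord; but at that point you have essentially reconstructed the paper's argument.
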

\begin{proof}[Proof]
In this proof, we call \emph{order $k$ chain}  ($k\ge 0$) the result of $k$ successive inflations on a chord; 
in particular, an order $1$ chain is an inflated chord, in the sense of  Proposition \ref{Factorisation gonflage}. \\
Let us focus on the first half of the statement, involving the diagrams $D_\pm(a,b)$. 
We first consider the case $a>0$ and $b=0$. 
The diagrams $D_+(a,0)$ and $D_-(a,0)$ are obtained by inserting, in all possible ways, an order $a$ chain to $\dessin{0.65cm}{D11}$. 
Such an insertion is achieved by, first, an infection, followed by $a-1$ iterated infections on the newly created circle, and finaly, 
the insertion of a chord between the newest and the initial circles. 
These operations endow $D_\pm(a,0)$ with a canonical ordering, for which Propositions \ref{Inflation et invariance} and \ref{Factorisation lk} if $a>1$ (resp. Proposition \ref{Factorisation gonflage} if $a=1$) 
ensure that $ C_L[D_+(a,0)]+C_L[D_-(a,0)]$ indeed is a link invariant, and is given by 
$$ C_L[D_+(a,0)]+C_L[D_-(a,0)] =  
  \frac{1}{2} fr_{1}\times l_{1,2}\times l_{2,3}\times
  \cdots \times l_{a+1,1}. $$
The desired formula is then obtained by considering all possible orders on $D_\pm(a,0)$, noting that, for symmetry reasons, each term appears twice in the defining sum for 
$\mathcal{L}_{i,j,\{1,\cdots,a+1\}\setminus \{i,j\}}$ when $i=j$, hence an extra $\frac{1}{2}$ factor. \\
In the case where $a>0$ and $b>0$, the diagrams  $D_\pm(a,b)$ are the result of inserting on $\dessin{0.65cm}{D01}$, in all possible ways, an order $a$ chain, followed by an order $b$ chain.
The exact same argument then applies. \\
The second half of the statement is proved in a strictly similar way. 
The first case uses the fact that the diagrams $D_\pm(a,0,0)$ ($a\ge 1$) are obtained by inserting, 
in all possible ways, an order $a$ chain to $\dessin{0.65cm}{D22}$ (thus using Lemma \ref{lem:D22}). 
Likewise, for the second case, $D_\pm(a,b,c)$ ($a,b\ge 1$) is obtained by inserting three chains of order $a$, $b$ and $c$ to the empty diagram on two circle. 
\end{proof}

Using Theorem \ref{thm:coeffs essentiels}, we can give the desired explicit formula for the invariants $\mu_n$, 
for any $n\ge 3$, in terms of Conway coefficients and the linking matrix.  
\begin{theorem}\label{cor:mun}
For all $n\ge 1$, and for any framed oriented $(n+2)$-component link $L$, we have 
\begin{eqnarray*}
\mu_{n+2}(L)    & = &  \frac{1}{12} \sum_{\substack{1\le i\le n \\ a+b=n+1 \\ a\ge b}} \sum_{\mathcal{I}_i(a,b)} \mathcal{L}_{i,I}  \mathcal{L}_{i,J}  
     + \frac{1}{2} \sum_{\substack{1\le i< j\le n \\ a+b+c=n \\ a\ge b\ge c}}  \sum_{\mathcal{I}_{i,j}(a,b,c)} \mathcal{L}_{i,j,I}  \mathcal{L}_{i,j,J}  \mathcal{L}_{i,j,K} \\
    &  & +  \frac{1}{4} \sum_{1\le i< j\le n} l_{i,j}^2 \mathcal{L}_{i,j,\{1,\cdots,n+2\}\setminus \{i,j\}} - U_{n+3}(L). 
\end{eqnarray*}
\end{theorem}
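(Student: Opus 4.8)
The plan is to assemble $\mu_{n+2}$ from its definition, $\mu_{n+2}(L) = 2\sum_{D\in\mathcal{E}(n+2)}\mathcal{C}_L[D]$, by splitting the sum according to the decomposition \eqref{eq:split} of $\mathcal{E}^\pm(n+2)$ into the families $\{D_\pm(a,b)\}$ (with $a+b=n+1$) and $\{D_\pm(a,b,c)\}$ (with $a+b+c=n$), and then feeding in the coefficient formulas of Theorem~\ref{thm:coeffs essentiels}. The first thing I would do is record the values of $\iota_\Theta$ on the seed diagrams and observe that $\iota_\Theta$ is \emph{constant} on each family $\{D_+(a,b)\}$, $\{D_-(a,b)\}$, etc.\ — this is because applying $\iota_1$ to an essential diagram can be done by first contracting all the two-legged circles (each inflation contributes a harmless factor $1$, using the argument in the proof of Proposition~\ref{Inflations et Essentiels}), reducing to the seed. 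So $\iota_\Theta(D_+(a,b))=\iota_\Theta(\dessin{0.3cm}{D21_1})=\tfrac16$, $\iota_\Theta(D_-(a,b))=-\tfrac13$, $\iota_\Theta(D_+(a,b,c))=\tfrac14$, $\iota_\Theta(D_-(a,b,c))=-\tfrac14$, matching the low-degree values in Examples~\ref{footnote} and the computation inside Lemma~\ref{ex:mu2}.

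Next I would reorganize the sum over $\mathcal{E}(n+2)$ by grouping each $\oplus$-essential family with the corresponding $\ominus$-essential one, i.e.\ summing over $\mathcal{D}(a,b)$ and $\mathcal{D}(a,b,c)$. Writing $C_L[D_+]+C_L[D_-]$ for a matched pair, and then using $\tfrac16 C_L[D_+]-\tfrac13 C_L[D_-] = \tfrac16(C_L[D_+]+C_L[D_-]) - \tfrac12 C_L[D_-]$ in the two-chord case, and $\tfrac14 C_L[D_+]-\tfrac14 C_L[D_-] = \tfrac14(C_L[D_+]+C_L[D_-]) - \tfrac12 C_L[D_-]$ in the three-chord case, the "symmetric combination" parts $\sum(C_L[D_+]+C_L[D_-])$ get replaced, via Theorem~\ref{thm:coeffs essentiels}, by the chain-product expressions $\mathcal{L}_{i,I}\mathcal{L}_{i,J}$ and $\mathcal{L}_{i,j,I}\mathcal{L}_{i,j,J}\mathcal{L}_{i,j,K}$ (and the $l_{i,j}^2\mathcal{L}_{i,j,\ldots}$ term in the degenerate case $b=c=0$), while the residual $-\tfrac12\sum_{\mathcal{E}^-(n+2)} C_L[D]$ part is precisely $-\tfrac12\cdot 2\cdot\tfrac12\,U_{n+3}(L)=-\tfrac12 U_{n+3}(L)$... — here I must be careful with the overall factor $2$ from Definition~\ref{def:mun} and the rational prefactors, so that $\tfrac16\cdot 2\to\tfrac13$, but the symmetric pair is counted with the combined coefficient; tracking these constants is where the $\tfrac1{12}$, $\tfrac12$, $\tfrac14$ in the statement come from, and the $\ominus$-only residue assembles into $-U_{n+3}(L)$ by Proposition~\ref{U_n} (which identifies $\sum_{D\in\mathcal{E}^-(n+2)}C_L[D]=U_{n+3}(L)$).

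I would then just substitute: the pair $\{D_\pm(a,b)\}$ with $a+b=n+1$ contributes $2\cdot\big(\tfrac16\cdot\tfrac14\sum_i\sum_{\mathcal{I}_i(a,b)}\mathcal{L}_{i,I}\mathcal{L}_{i,J} + (\text{$\ominus$-residue})\big)$, giving the $\tfrac1{12}\sum_{\mathcal{I}_i(a,b)}\mathcal{L}_{i,I}\mathcal{L}_{i,J}$ term; the pair $\{D_\pm(a,b,c)\}$ with $a+b+c=n$ and $(b,c)\neq(0,0)$ contributes $2\cdot\tfrac14\sum_{i<j}\sum_{\mathcal{I}_{i,j}(a,b,c)}\mathcal{L}_{i,j,I}\mathcal{L}_{i,j,J}\mathcal{L}_{i,j,K}$, giving the $\tfrac12$ term; and the degenerate case $b=c=0$ (so $a=n$, diagrams on $n+2$ circles) contributes $2\cdot\tfrac14\cdot\tfrac12\sum_{i<j}l_{i,j}^2\mathcal{L}_{i,j,\{1,\ldots,n+2\}\setminus\{i,j\}}$, giving the $\tfrac14$ term. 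Collecting all the $\ominus$-residues over all families reconstitutes exactly $\sum_{D\in\mathcal{E}^-(n+2)}C_L[D]=U_{n+3}(L)$ with total coefficient $-1$, by Proposition~\ref{U_n} and \eqref{eq:split}. The main obstacle is bookkeeping: making sure that (i) the constant $\iota_\Theta$ is correctly identified on every family including the degenerate ones, (ii) the rational prefactors and the ambient factor $2$ combine to give exactly $\tfrac1{12},\tfrac12,\tfrac14$, and (iii) the $-\tfrac12 C_L[D_-]$ residual terms, summed over \emph{all} of $\mathcal{E}^-(n+2)$ via \eqref{eq:split}, assemble into a single clean $-U_{n+3}(L)$ with no leftover pieces — in particular that the degenerate family $D_-(n,0,0)$ is not double-counted against $D_-(a,b)$-type diagrams. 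Once these constants are pinned down the identity is immediate; no genuinely new idea beyond Theorems~\ref{thm:coeffs essentiels}, \ref{c_n} and Proposition~\ref{U_n} is required, so the proof will be short.
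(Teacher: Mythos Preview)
Your proposal is correct and follows essentially the same route as the paper: decompose $\mathcal{E}^\pm(n+2)$ via \eqref{eq:split}, use that $\iota_\Theta$ is constant on each family $\{D_\pm(a,b)\}$, $\{D_\pm(a,b,c)\}$ with the values $\tfrac16,-\tfrac13,\tfrac14,-\tfrac14$, then regroup each $(\oplus,\ominus)$ pair into a symmetric sum (evaluated by Theorem~\ref{thm:coeffs essentiels}) plus a $\ominus$-only residue, the latter assembling via \eqref{eq:split} into $-\sum_{D\in\mathcal{E}^-(n+2)}C_L[D]=-U_{n+3}(L)$ by Proposition~\ref{U_n}. Your momentary uncertainty about the constants (the ``$-\tfrac12 U_{n+3}$'' line) resolves exactly as you suspected once the ambient factor $2$ is applied, and your worry about double-counting $D_-(n,0,0)$ against $D_-(a,b)$-types is unfounded since Lemma~\ref{Conditions combinatoires} separates these families by leg count; Theorem~\ref{c_n} is not actually needed.
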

\begin{proof}[Proof]
According to (\ref{eq:split}), we have 
 $$\mu_{n+2}(L)  = 2 \sum_{\substack{a+b=n+1 \\ a\ge b}}  \sum_{D\in \mathcal{D}(a,b)} \mathcal{C}_L[D] 
 + 2\sum_{\substack{a+b+c=n \\ a\ge b\ge c}}  \sum_{D\in \mathcal{D}(a,b,c)} \mathcal{C}_L[D]. $$ 
By the definition of the $\iota_1$ map, it is  easily verified that for any $a,b,c\ge 0$, we have  
$\iota_\Theta(D_+(a,b)) = \frac{1}{6}$, $\iota_\Theta(D_-(a,b)) = -\frac{1}{3}$, $\iota_\Theta(D_+(a,b,c)) = \frac{1}{4}$ and $\iota_\Theta(D_-(a,b,c))$ $= -\frac{1}{4}$. 
Hence, recalling that $ \mathcal{D}(a,b) =  \{D_+(a,b)\} \cup \{D_-(a,b)\}$ and $\mathcal{D}(a,b,c) =  \{D_+(a,b,c)\} \cup \{D_-(a,b,c)\}$, 
 we have 
\begin{eqnarray*}
\mu_{n+2}(L) & = &\, 2  \sum_{\substack{a+b=n+1 \\ a\ge b}}  \Big( \frac{1}{6}\sum_{D\in \{D_+(a,b)\}} C_L[D]  -\frac{1}{3} \sum_{D\in \{D_-(a,b)\}} C_L[D] \Big) \\
    &    & + 2\sum_{\substack{a+b+c=n \\ a\ge b\ge c}}  \Big( \frac{1}{4} \sum_{D\in \{D_+(a,b,c)\}} C_L[D]  -\frac{1}{4} \sum_{D\in \{D_-(a,b,c)\}} C_L[D] \Big).
\end{eqnarray*}
It follows that 
\begin{eqnarray*} 
\mu_{n+2}(L) 
    & = & \sum_{\substack{a+b=n+1 \\ a\ge b}}  \Big( \frac{1}{3}\sum_{D\in \mathcal{D}(a,b)} C_L[D]  - \sum_{D\in \{D_-(a,b)\}} C_L[D] \Big) \\
    &    & + \sum_{\substack{a+b+c=n \\ a\ge b\ge c}}  \Big( \frac{1}{2} \sum_{D\in \mathcal{D}(a,b,c)} C_L[D]  - \sum_{D\in \{D_-(a,b,c)\}} C_L[D] \Big)  \\
    & = & \frac{1}{3} \sum_{\substack{a+b=n+1 \\ a\ge b}}  \sum_{D\in \mathcal{D}(a,b)}\!\!\!\!\!\! C_L[D] 
           + \frac{1}{2} \sum_{\substack{a+b+c=n \\ a\ge b\ge c}}  \sum_{D\in \mathcal{D}(a,b,c)}\!\!\!\!\!\! C_L[D]  -\!\!\!\!\!\! \sum_{D\in \mathcal{E}^-(n+2)} C_L[D]
\end{eqnarray*}
where the last equality uses (\ref{eq:split}). 
It only remains to use Theorem \ref{thm:coeffs essentiels} to express the first two terms in terms of linkings and framings, 
and Proposition \ref{U_n} to identify the last sum with $U_{n+3}$.
\end{proof}

\begin{remark}\label{rem:jesuisinvariant}
The fact that the defining formula for $\mu_{n+2}$ gives a link invariant follows readily from the decomposition 
$$ \mu_{n+2}(L) =\frac{1}{3} \sum_{\substack{a+b=n+1 \\ a\ge b}}  \sum_{D\in \mathcal{D}(a,b)}\!\!\!\!\!\! C_L[D] 
           + \frac{1}{2} \sum_{\substack{a+b+c=n \\ a\ge b\ge c}}  \sum_{D\in \mathcal{D}(a,b,c)}\!\!\!\!\!\! C_L[D]  -\!\!\!\!\!\! \sum_{D\in \mathcal{E}^-(n+2)} C_L[D] $$
and the fact that each of the above three sums defines a link invariant, by Theorem \ref{thm:coeffs essentiels} and Proposition \ref{U_n}. 
\end{remark}
\medskip

The techniques used to show Theorem \ref{thm:coeffs essentiels} can also be used to prove the following technical result. 
 Recall from Definition \ref{rem:chain} that a chain of $m$ circles is a connected chord diagram on $m$ circles with two legs on each circle. 
\begin{lemma}\label{a_la_chain}
 Let $C_m$ be a chain of $m$ circles, and let $I=\{i_1,\cdots,i_m\}$ be a set of $m$ pairwise distinct indices. 
 Let $\mathcal{D}(I)$ be the set of all chord diagrams obtained by labeling $C_m$ by the elements of $I$, in all possible ways. 
 Then for a framed oriented $m$-component link $L$, we have 
   $$ \sum_{D\in \{D(I)\}} C_L[D] = 
   \begin{cases} 
   \frac{1}{2} fr_{i_1} & \text{if } m=1 \\ 
   \frac{1}{2}\sum_{\sigma \in S_{m-1}} l_{i_m,i_{\sigma(1)}} l_{i_{\sigma(1)}, i_{\sigma(2)}} \times \cdots \times  l_{i_{\sigma(m-1)}, i_m} & \text{if } m>1 
   \end{cases}.$$
\end{lemma}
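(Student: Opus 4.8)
The plan is to mimic closely the argument used in the proof of Theorem \ref{thm:coeffs essentiels}, since a chain of $m$ circles is precisely the type of diagram obtained by iterated inflations on the degree-$1$ diagram $\dessin{0.5cm}{D11}$ (one circle with an isolated chord), and by definition such a chain closes into $-2$, so it is built from a single chord of $\dessin{0.5cm}{D11}$ by $(m-1)$ successive inflations. The case $m=1$ is immediate: it is exactly Lemma \ref{lem:D11}, which gives $C_K\big[\dessin{0.5cm}{D11}\big]=\frac{1}{2}fr(K)$. So we may assume $m>1$.

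For $m>1$, I would first observe that a chain $C_m$ of $m$ circles is obtained from $\dessin{0.5cm}{D01}$ (a single circle with no chord) by, first, an infection, then $(m-2)$ further infections on the newly created circles, and finally the insertion of a mixed chord between the last created circle and the original one; equivalently, it is obtained from a chain $C_{m-1}$ of $m-1$ circles by one infection followed by one chord insertion. Starting from the invariant $C\big[\dessin{0.5cm}{D01}\big]$, which takes value $1$ on any knot by the obvious formula, one applies Proposition \ref{Inflation et invariance} repeatedly (once for each infection) and Proposition \ref{Factorisation lk} once (for the final chord insertion), or, for the very first inflated chord, Proposition \ref{Factorisation gonflage} in the case $i=j$. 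This endows a particular ordering of the circles of $C_m$ with a canonical structure, for which the sum of the coefficients of the two resulting ordered diagrams — but in fact here there is only one relevant diagram type since $\dessin{0.5cm}{D11}$ has a single chord — is a link invariant equal to $\frac{1}{2}\,l_{1,2}\,l_{2,3}\cdots l_{m-1,m}\,l_{m,1}$, where the $\frac{1}{2}$ comes from the $i=j$ case of Proposition \ref{Factorisation gonflage} (equivalently, from Lemma \ref{lem:D22} used as a building block). Reading this with the indices taken from $I$ rather than $\{1,\dots,m\}$ gives the contribution of one cyclic ordering.

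Next I would sum over all orderings of the circles of $C_m$ by elements of $I$. The key combinatorial point is that the chain $C_m$, viewed as an unordered diagram, is invariant under the dihedral symmetry of its circles (cyclic rotation and reflection), so the number of distinct ordered diagrams is $\frac{m!}{2m}=\frac{(m-1)!}{2}$, and each contributes a product $l_{i_m,i_{\sigma(1)}}l_{i_{\sigma(1)},i_{\sigma(2)}}\cdots l_{i_{\sigma(m-1)},i_m}$ over $\sigma\in S_{m-1}$, with each such product counted exactly twice (once for a permutation and once for its reversal, both giving the same cyclic word). Combining this factor of $2$ with the $\frac{1}{2}$ from the basic building block yields the stated formula $\frac{1}{2}\sum_{\sigma\in S_{m-1}} l_{i_m,i_{\sigma(1)}}\cdots l_{i_{\sigma(m-1)},i_m}$, i.e. $\frac{1}{2}\mathcal{L}_{i_m,\{i_1,\dots,i_{m-1}\}}$ in the notation of Section \ref{sec:mu_n}. (One should double-check the symmetry bookkeeping: the unordered chain genuinely has dihedral automorphism group of order $2m$, so among the $m!$ labelings exactly $\frac{(m-1)!}{2}$ are distinct, which matches.)

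The main obstacle I expect is getting the symmetry/multiplicity factor exactly right — in particular, correctly tracking how the $\frac{1}{2}$ produced by the $i=j$ case of Proposition \ref{Factorisation gonflage} interacts with the overcounting inherent in writing the answer as a sum over all of $S_{m-1}$ (which double-counts cyclic words up to reversal), so that the two factors of $2$ and $\frac{1}{2}$ combine to give precisely $\frac{1}{2}$ out front and not $\frac{1}{4}$ or $1$. This is exactly the subtle point already flagged in the proof of Theorem \ref{thm:coeffs essentiels} (\lq\lq each term appears twice in the defining sum\ldots hence an extra $\frac{1}{2}$ factor\rq\rq), and the cleanest way to be safe is probably to verify the $m=2$ case by hand against Lemma \ref{lem:D22} — there $\mathcal{D}(I)$ consists of a single chain on two circles, and the formula should reduce to $\frac{1}{2}(l_{i_2,i_1}l_{i_1,i_2})=\frac{1}{2}l_{i_1,i_2}^2$, which indeed matches $C_L\big[\dessin{0.5cm}{D22}\big]=\frac{1}{2}l_{1,2}^2$ — and the $m=3$ case against Lemma \ref{Invariant particulier} or a direct computation, before trusting the general induction.
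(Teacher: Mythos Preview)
Your approach is the paper's: dispose of $m=1,2$ via Lemmas \ref{lem:D11} and \ref{lem:D22}, and for $m\ge 3$ build the chain from $\dessin{0.5cm}{D01}$ by successive infections plus a final mixed chord, exactly as in the proof of Theorem \ref{thm:coeffs essentiels}.

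There is one genuine muddle in your bookkeeping, though, and it is precisely the point you flag. For $m\ge 3$ the construction you describe (an infection, then $m-2$ further infections, then Proposition \ref{Factorisation lk}) uses only Propositions \ref{Inflation et invariance} and \ref{Factorisation lk}, neither of which produces a factor $\tfrac{1}{2}$; the $i=j$ case of Proposition \ref{Factorisation gonflage} does not enter here, because applying it to $\dessin{0.5cm}{D01}$ gives a chain of two circles, and there is no factorization move that then turns this into a chain of three. So for a single labeled chain with cyclic order $(i_m,j_1,\dots,j_{m-1})$ one has $C_L[D]=l_{i_m,j_1}l_{j_1,j_2}\cdots l_{j_{m-1},i_m}$ with \emph{no} prefactor, and the $\tfrac{1}{2}$ in the final formula comes entirely from the fact that $\sum_{\sigma\in S_{m-1}}$ counts each cyclic word twice (once for $\sigma$ and once for its reversal). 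The $m=2$ case is genuinely different: there the $\tfrac{1}{2}$ does come from Proposition \ref{Factorisation gonflage} with $i=j$, and there is no overcounting since $|S_1|=1$. Your proposed sanity check at $m=3$ confirms this: there is a unique labeled chain of three circles, its coefficient is $l_{1,2}l_{1,3}l_{2,3}$ (no $\tfrac{1}{2}$), and indeed $\tfrac{1}{2}\sum_{\sigma\in S_2}l_{3,\sigma(1)}l_{\sigma(1),\sigma(2)}l_{\sigma(2),3}=l_{1,2}l_{1,3}l_{2,3}$.
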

\begin{proof}[Proof]
 If $m=1$ or $2$, then there is a unique labeling of $C_I$ and the result is given by Lemmas \ref{lem:D11} and  \ref{lem:D22}, respectively. 
 If $m>2$,  an element of $\mathcal{D}(I)$ can be seen as obtained from $\dessin{0.5cm}{D01}$, labeled by $i_m$, by adding an order $m-1$ chain of circles, labeled by $i_1,\cdots, i_{m-1}$ in all possible ways. 
 The same arguments as in the proof of Theorem \ref{thm:coeffs essentiels} then give the desired formula.  
\end{proof}

\section{Surgery formula for the Casson-Walker-Lescop invariant}

We now prove the surgery formula stated in the introduction.

\subsection{Setup}

In the previous sections, we identified certain combinations of coefficients of the Kontsevich integral in terms of classical invariants. 
In order to derive from these results a formula for the Casson-Walker-Lescop invariant, we now have to study how these particular diagrams contribute to the degree $\le 1$ part of the LMO invariant. 
Recall indeed that 
$$Z_1^{LMO}(S^3_L) = \left( \frac{\iota_1(\check{Z}(L))}{\iota_1(\check{Z}(U_+))^{\sigma_+(L)} \iota_1(\check{Z}(U_-))^{\sigma_-(L)}}\right)_{\le 1}, $$
and that the coefficient of $\dessin{0.5cm}{T}$ in $Z_1^{LMO}(S^3_L)$ is $\frac{(-1)^{\beta_1(S^3_L)}}{2} \lambda_L(S^3_L)$ (Theorem \ref{LMOCasson}). 
By Equation (\ref{Denominateur LMO}), there are two types of contributions to the coefficient of $\dessin{0.5cm}{T}$ coming from this formula:
\begin{enumerate}
	\item The diagram $\dessin{0.5cm}{T}$ comes from the denominator with coefficient $\frac{(-1)^{\sigma_+(L)} \sigma(L)}{16}$, and is multiplied by a constant term coming from $\iota_1(\check{Z}(L))$. 
	\item The diagram $\dessin{0.5cm}{T}$ comes from $\iota_1(\check{Z}(L))$, with some coefficient, and is multiplied by the coefficient  $(-1)^{\sigma_+(L)}$ coming from the denominator.
\end{enumerate}
Summarizing, we have the following key equality
\begin{equation}\label{eq:main}
\frac{(-1)^{\beta_1(S^3_L)}}{2} \lambda_L(S^3_L) =  \frac{(-1)^{\sigma_+(L)} \sigma(L)}{16} \Big( \iota_1(\check{Z}(L)) \Big)_0 + (-1)^{\sigma_+(L)} \times \Big( \iota_1(\check{Z}(L)) \Big)_1. \tag{*}
\end{equation}

\subsection{The surgery formula}

Recall from Section \ref{sec:conv} that, if $\mathbb{L}$ is the linking matrix of a framed oriented $n$-component link, 
and if $I$ is some subset of $\{1,\cdots ,n\}$, we denote by $\mathbb{L}_{\check{I}}$ the matrix obtained from $\mathbb{L}$ by 
deleting the lines and column indexed by elements of $I$. 

\begin{theorem} \label{Thm general}
Let $L$ be a framed oriented $n$-component link in $S^3$ with linking matrix $\mathbb{L}$. 
Let $S^3_L$ be the result of surgery on $S^3$ along $L$. 
The  Casson-Walker-Lescop invariant $\lambda_L(S^3_L)$  is given by 
$$ \frac{(-1)^{\sigma_-(L)} \det \mathbb{L}}{8} \sigma(L) 
+ (-1)^{n+\sigma_-(L)}\sum_{k=1}^{n} \sum_{\substack{I \subset \{ 1, \ldots, n \} \\ |I| = k}} (-1)^{n - k} \det \mathbb{L}_{\check{I}} \mu_k(L_I).$$
\end{theorem}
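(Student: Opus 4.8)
The plan is to start from the key identity \eqref{eq:main},
$$\frac{(-1)^{\beta_1(S^3_L)}}{2}\lambda_L(S^3_L)=\frac{(-1)^{\sigma_+(L)}\sigma(L)}{16}\big(\iota_1(\check{Z}(L))\big)_0+(-1)^{\sigma_+(L)}\big(\iota_1(\check{Z}(L))\big)_1,$$
and to compute the two quantities $\big(\iota_1(\check{Z}(L))\big)_0$ and $\big(\iota_1(\check{Z}(L))\big)_1$ explicitly in terms of $\mathbb{L}$ and the invariants $\mu_k$.

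First I would treat the degree $0$ part. Since $\iota_1$ lowers degrees by $n$ and kills any circle carrying $\le 1$ or $\ge 5$ legs, the only chord diagrams of $\check{Z}(L)$ contributing to $\big(\iota_1(\check{Z}(L))\big)_0$ have degree $n$ with exactly two legs on each circle; as the copies of $\nu$ in $\check{Z}$ have no degree $1$ term and their degree $\ge 2$ terms would overload a circle, such diagrams already occur in $\hat{Z}(L)$ and their connected components are chains of circles (Definition \ref{rem:chain}). Each chain of circles closes into $-2$, and Lemma \ref{a_la_chain} gives the coefficient in $\hat{Z}(L)$ of a chain on a fixed set of circles. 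Expanding $\big(\iota_1(\check{Z}(L))\big)_0$ as a sum over set partitions of $\{1,\dots,n\}$ (a chain on each block) and matching a set partition together with a cyclic ordering of each block to a permutation via its cycle decomposition, one recognizes the cycle-type expansion of a determinant; tracking the signs — a chain on a block of size $m$ contributes $(-2)\cdot\frac12\sum_{m\text{-cycles }\tau}\prod_i l_{i,\tau(i)}=-\sum_{m\text{-cycles }\tau}\prod_i l_{i,\tau(i)}$, hence a factor $(-1)$ per cycle, while $\operatorname{sgn}(\sigma)=(-1)^{n-\#\mathrm{cycles}(\sigma)}$ — yields
$$\big(\iota_1(\check{Z}(L))\big)_0=(-1)^n\det\mathbb{L},$$
and, applied to any sublink, $\big(\iota_1(\check{Z}(L_{\check{I}}))\big)_0=(-1)^{\,n-|I|}\det\mathbb{L}_{\check{I}}$.

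Next I would treat the coefficient of $\dessin{0.5cm}{T}$, i.e.\ the degree $1$ part. A diagram of $\check{Z}(L)$ contributes only if $\iota_1$ sends it to degree $1$, which forces the leg pattern of Lemma \ref{Conditions combinatoires}: either one circle with $4$ legs, or two circles with $3$ legs, all others with $2$. Splitting into connected components, the component carrying the excess legs is then an essential diagram on some subset $I$ of circles, and every other component is a chain of circles on $L_{\check{I}}$ — with one extra contribution coming from the normalization $\check{Z}=\hat{Z}\#\nu^{\otimes n}$, namely a degree $2$ term of a single $\nu$ sitting on an otherwise chord-free circle $i$, which produces the $\dessin{0.5cm}{D01}$ term with weight $\iota_\Theta(\dessin{0.5cm}{D01})=\frac1{48}$ (as in Example \ref{footnote}) and forces $\{i\}$ to be a singleton block. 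Grouping the sum according to $I$, and using the Disjoint Union Lemma \ref{Factorisation} together with multiplicativity of $\iota_1$ under disjoint union, the factor coming from the component on $I$ is $\sum_{D\in\mathcal{E}(k)}\mathcal{C}_{L_I}[D]$ for $|I|=k\ge 2$ and $\sum_{D\in\{\dessin{0.5cm}{D01}\}\cup\mathcal{E}(1)}\mathcal{C}_{L_I}[D]$ for $|I|=1$ — that is $\frac12\mu_k(L_I)$ by Definition \ref{def:mun} — while the factor coming from the chains on $L_{\check{I}}$ is exactly $\big(\iota_1(\check{Z}(L_{\check{I}}))\big)_0=(-1)^{\,n-k}\det\mathbb{L}_{\check{I}}$ from the previous step. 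Hence
$$\big(\iota_1(\check{Z}(L))\big)_1=\frac12\sum_{k=1}^{n}\ \sum_{\substack{I\subset\{1,\dots,n\}\\ |I|=k}}(-1)^{\,n-k}\det\mathbb{L}_{\check{I}}\,\mu_k(L_I).$$

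Finally I would substitute these two formulas into \eqref{eq:main}, multiply by $2(-1)^{\beta_1(S^3_L)}$, and simplify the remaining signs using the standard identity $\beta_1(S^3_L)=n-\operatorname{rk}\,\mathbb{L}=n-\sigma_+(L)-\sigma_-(L)$, which gives $(-1)^{\beta_1+\sigma_+}=(-1)^{n+\sigma_-}$ and $(-1)^{\beta_1+\sigma_+}(-1)^n=(-1)^{\sigma_-}$; this turns the identity into the statement. The main obstacle is the third step: one must check that the listed configurations (a unique essential component plus chains, together with the single $\nu$-correction) are really the only diagrams contributing to the coefficient of $\dessin{0.5cm}{T}$, and be careful about which circles carry the copies of $\nu$ so that the factor $\frac12\mu_k(L_I)$ — including its $\iota_\Theta$-weights and the $\dessin{0.5cm}{D01}$ correction packaged into $\mu_1$ — is cleanly separated from the chains factor; the sign bookkeeping in the two combinatorial expansions is the other delicate point.
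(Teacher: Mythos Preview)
Your proposal is correct and follows essentially the same route as the paper: compute $\big(\iota_1(\check{Z}(L))\big)_0=(-1)^n\det\mathbb{L}$ via chains of circles and Lemma~\ref{a_la_chain} (the paper leaves the cycle-expansion of the determinant as an exercise, while you spell it out), then compute $\big(\iota_1(\check{Z}(L))\big)_1$ by splitting each contributing diagram into an essential piece on $I$ times chains on the complement and invoking Definition~\ref{def:mun}, and finally substitute into \eqref{eq:main} using $\beta_1+\sigma_+\equiv n+\sigma_-\pmod 2$. These three steps are exactly the paper's Lemmas~\ref{eq1} and~\ref{eq2} and the short paragraph preceding them.
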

 Some remarks are in order. 
\begin{remark}\label{rem:lescoop}
As pointed out in the introduction, this recovers Lescop's third formula \cite[Prop.~1.7.8]{Lescop} 
for her extension of the Casson-Walker invariant. 
In particular, the two formulas in Theorem \ref{thm:coeffs essentiels}, which underly the definition of the invariant $\mu_k$ by Theorem \ref{cor:mun}, 
correspond to the products of linkings $\Theta_b$ in \cite[Nota.~1.7.5]{Lescop}. More precisely, in the terminology of \cite[Fig.~1.2]{Lescop}, 
the first formula corresponds to $\Theta_b$ in the case of a \lq Figure-eight graph\rq, while the second formula corresponds to the case of a \lq beardless $\Theta$\rq. 
\\
It is quite interesting to see how Lescop's \lq chain products of linking numbers\rq \, $\Theta_b$ 
appear naturally in our proof from the combinatorics of chord and Jacobi diagrams and the universal Kontsevich-LMO invariants. 
\end{remark} 
\begin{remark}\label{rem:cestcadeaucamfaitplaisir}
As an illustration, let us focus on the case $n=2$ for rational homology spheres.
Let $L = K_1 \cup K_2$ be a framed oriented link whose linking matrix $\mathbb{L} = \left( \begin{smallmatrix}
a & n\\
n & b	
\end{smallmatrix} \right)$ has nonzero determinant. 
Then $S^3_L$ is a rational homology sphere and 
$\lambda_L(M) = \frac{1}{2}\vert \det \mathbb{L}\vert \lambda_W(M)$.
One can easily check that $(-1)^{\sigma_-(L)}$ is just the sign of $\det \mathbb{L}$, and 
Theorem \ref{Thm general} thus gives us 
$$\frac{1}{2} \det \mathbb{L} \lambda_W(S^3_L) = \frac{\det \mathbb{L}}{8} \sigma(L) 
+ \Big( \mu_2(L) - a\mu_1(K_2) - b\mu_1(K_1) \Big). $$
Using the explicit formulas for $\mu_1$ and $\mu_2$ given in Lemmas \ref{ex:mu1} and \ref{ex:mu2}, we then obtain the following formula  for 
$\frac{\det \mathbb{L}}{2}\left(\lambda_W(M) - \frac{1}{4} \sigma(L)\right)$: 
$$ a c_2(L_2) + bc_2(L_1) +\frac{n^3-n}{12} + \frac{(a+b)}{24}(2n^2-ab-2) - c_3(L) + n\left(c_2(L_1)+c_2(L_2)\right).$$
This recovers a result of S.~Matveev et M.~Polyak \cite[Thm.~6.3]{Matveev-Polyak}. 
\end{remark}
\medskip 

The rest of this section is devoted to the proof of Theorem \ref{Thm general}.

Recall that $\beta_1(S^3_L)$ is the nullity of $\mathbb{L}$, so that multiplying Equation  (\ref{eq:main}) by $2(-1)^{\beta_1(S^3_L)}$ gives 
$$ 
\lambda_L(S^3_L) =  \frac{(-1)^{\sigma_-(L)}}{8} \Big( \iota_1(\check{Z}(L)) \Big)_0 + (-1)^{n+\sigma_-(L)} \times 2\Big( \iota_1(\check{Z}(L)) \Big)_1.   
$$
Hence we are left with the explicit computations of $ \Big( \iota_1(\check{Z}(L)) \Big)_0$ and $ \Big( \iota_1(\check{Z}(L)) \Big)_1$. This is done in the following two lemmas. 

\begin{lemma}\label{eq1}
$$ \Big( \iota_1(\check{Z}(L)) \Big)_0 =(-1)^n \det \mathbb{L}.$$
\end{lemma}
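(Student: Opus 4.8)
The plan is to compute $\big(\iota_1(\check{Z}(L))\big)_0$ directly from the combinatorics of the map $\iota_1$, and then recognize the resulting expression as the cycle‑type expansion of $\det\mathbb{L}$.

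\emph{Step 1: identifying the contributing diagrams.} Since $\iota_1$ replaces a circle carrying $k$ legs by the portion $T_k$ of Jacobi diagram, and $T_3,T_4$ create trivalent vertices while $T_2$ creates none, a chord diagram $D$ on $n$ circles has $\big(\iota_1(D)\big)_0\neq 0$ if and only if every circle of $D$ carries exactly two legs. A connected chord diagram with two legs on every circle is exactly a chain of circles in the sense of Definition~\ref{rem:chain} (the ``reduced graph'' with a vertex per circle and an edge per chord is $2$-regular and connected, hence a single cycle), so such a $D$ is a disjoint union of chains; replacing every circle by a strut $T_2$ turns each chain into one copy of $\dessin{0.5cm}{D00}$, i.e. into the scalar $(-2)$, so $\iota_1(D)=(-2)^{c(D)}$ with $c(D)$ the number of connected components. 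Moreover the copies of $\nu$ in $\check{Z}(L)=\hat{Z}(L)\#\nu^{\otimes n}$ are irrelevant here: by \eqref{eq:nu}, $\nu$ has no degree‑$1$ part (no term with exactly two legs on the circle), so the chord diagrams with two legs on every circle occurring in $\check{Z}(L)$ are precisely those occurring in $\hat{Z}(L)$, with the same coefficients. Hence $\big(\iota_1(\check{Z}(L))\big)_0=\sum_D C_L[D]\,(-2)^{c(D)}$, the sum running over chord diagrams $D$ on $n$ circles that are disjoint unions of chains.

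\emph{Step 2: organizing the sum over set partitions.} Such a $D$ determines a partition $\pi$ of $\{1,\dots,n\}$ into the index sets of its chain components, and conversely $D$ is recovered from a choice of a chain structure on each block $I\in\pi$; note $c(D)=|\pi|$. Factoring $C_L[D]$ over blocks with the Disjoint Union Lemma~\ref{Factorisation}, and using Lemma~\ref{a_la_chain} to evaluate, for a fixed block $I$, the sum of $C_{L_I}[\cdot]$ over all chain structures on $I$, I obtain
\[ \big(\iota_1(\check{Z}(L))\big)_0=\sum_{\pi}(-2)^{|\pi|}\prod_{I\in\pi}\Big(\tfrac12\sum_{\rho}\ \prod_{i\in I}\mathbb{L}_{i,\rho(i)}\Big), \]
where for each block $I$ the inner sum runs over the cyclic permutations $\rho$ of $I$ (when $|I|=1$ the single term is $\mathbb{L}_{i,i}=fr_i$). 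This is just a reindexing of the formula in Lemma~\ref{a_la_chain}: $\tfrac12\sum_{\sigma\in S_{m-1}}l_{i_m,i_{\sigma(1)}}l_{i_{\sigma(1)},i_{\sigma(2)}}\cdots l_{i_{\sigma(m-1)},i_m}$ is $\tfrac12$ times the sum of the ``products around the cycle'' over all cyclic permutations of $I=\{i_1,\dots,i_m\}$.

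\emph{Step 3: recognizing the determinant.} The pairs $\big(\pi,(\rho_I)_{I\in\pi}\big)$ are in bijection with permutations $\sigma\in S_n$ (take $\pi$ to be the partition into cycle‑supports of $\sigma$), and under this bijection $\prod_{I\in\pi}\prod_{i\in I}\mathbb{L}_{i,\rho_I(i)}=\prod_{i=1}^{n}\mathbb{L}_{i,\sigma(i)}$, while $|\pi|$ is the number of cycles of $\sigma$. Since $(-2)^{|\pi|}(\tfrac12)^{|\pi|}=(-1)^{|\pi|}=(-1)^{n}\,\mathrm{sgn}(\sigma)$ (using $\mathrm{sgn}(\sigma)=(-1)^{\,n-\#\mathrm{cyc}(\sigma)}$), the displayed expression collapses to $(-1)^{n}\sum_{\sigma\in S_n}\mathrm{sgn}(\sigma)\prod_i\mathbb{L}_{i,\sigma(i)}=(-1)^{n}\det\mathbb{L}$. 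The routine parts are the identifications in Step~1 (that a connected $2$-legs-per-circle diagram is a chain and that $\iota_1$ sends it to $-2$) and the elementary rewriting in Step~2; the only point needing a little care is the sign bookkeeping of Step~3, i.e. matching the cycle‑type expansion with the standard expansion of $\det\mathbb{L}$ (one may wish to sanity-check it on $n=1,2$, where it gives $-fr(K)$ and $f_1f_2-l_{1,2}^{2}$ respectively).
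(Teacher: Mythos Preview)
Your proof is correct and follows essentially the same approach as the paper: both identify the contributing diagrams as disjoint unions of chains of circles (each closing into $-2$), use Lemma~\ref{a_la_chain} together with the Disjoint Union Lemma to express the degree-$0$ part as a sum over set partitions of products of linking numbers, and then identify this with $(-1)^n\det\mathbb{L}$. The only difference is that the paper leaves the final identification with the determinant as an exercise, whereas you carry it out explicitly via the cycle-type expansion and the sign identity $(-1)^{\#\mathrm{cyc}(\sigma)}=(-1)^n\,\mathrm{sgn}(\sigma)$; your treatment of the $\nu$-normalization (no degree-$1$ part, hence no extra contributions) is also slightly more explicit than the paper's footnote.
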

\begin{proof}[Proof]
The diagrams in the Kontsevich integral of $L$ that contribute to $ \Big( \iota_1(\check{Z}(L)) \Big)_0$ are those that close into a constant, 
that is, disjoint unions of chains of circles.\footnote{Note indeed that the normalization in $\check{Z}(L)$ adding a copy of $\nu$ to each circle does not affect the degree $0$ part of $\hat{Z}(L)$. }
As pointed out in Section \ref{sec:essentiels}, chains of circles always close into the constant $(-2)$. 
The coefficient of a chain of $k$ circles in the Kontsevich integral is given in terms of coefficients of the linking matrix  
by Lemma \ref{a_la_chain}, and yields the following: 
$$\Big( \iota_1(\check{Z}(L)) \Big)_0 =\sum_{\{ I_1, \ldots, I_k \} \, \text{partition of}\, \{ 1, \ldots, n \}} (-1)^k \prod_{j=1}^{k} \mathcal{I}(I_j), $$
where 
$\mathcal{I}(I_j) = \mathcal{L}_{i_m,I_j\setminus \{i_m\}}=\sum_{\sigma \in S_{m-1}} l_{i_m,i_{\sigma(1)}} l_{i_{\sigma(1)}, i_{\sigma(2)}} \times \cdots \times  l_{i_{\sigma(m-1)}, i_m}$ 
if $I_j=\{i_1,\cdots,i_m\}$ with $m>1$, and $\mathcal{I}(I_j) = fr_{i_1}$ otherwise. 
We leave it as an exercice to the reader to check that this indeed gives $(-1)^n \det \mathbb{L}$.
\end{proof}

\begin{lemma}\label{eq2}
$$ \Big( \iota_1(\check{Z}(L)) \Big)_1 = \frac{1}{2}\sum_{k=1}^{n} \sum_{\substack{I \subset \{ 1, \ldots, n \} \\ |I| = k}} (-1)^{n - k} \det \mathbb{L}_{\check{I}} \mu_k(L_I).$$
\end{lemma}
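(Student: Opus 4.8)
The plan is to identify the diagrams in $\hat Z(L)$ (or rather $\check Z(L)$) that contribute to the degree $1$ part of $\iota_1(\check Z(L))$, group them according to which connected component carries the ``$\dessin{0.5cm}{T}$-producing'' piece, and on each group apply the combinatorial identifications established in Sections 3 and 4. Concretely, by the combinatorial criterion (Lemma \ref{Conditions combinatoires}) and the structure of $\iota_1$, a chord diagram $D$ on $n$ circles closes into a nonzero multiple of $\dessin{0.5cm}{T}$ only if exactly one connected component of $D$ is an essential diagram (on some subset $I$ of circles) and every other connected component is a chain of circles — with the sole exception, flagged in Example \ref{footnote}, of the trivial one-circle diagram $\dessin{0.5cm}{D01}$, whose $\nu$-normalization in $\check Z$ contributes $\tfrac{1}{48}\dessin{0.5cm}{T}$; this exceptional case is exactly why $\mathcal{E}(1)$ is enlarged by $\dessin{0.3cm}{D01}$ in Definition \ref{def:mun}. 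So I would write
$$ \Big(\iota_1(\check Z(L))\Big)_1 = \sum_{\emptyset\neq I\subset\{1,\dots,n\}} \Big(\sum_{D_I \text{ ``essential piece'' on } I} \mathcal{C}_{L_I}[D_I]\Big) \cdot \Big(\iota_1\text{-value of a disjoint union of chains on } \check I\Big), $$
where the ``essential piece on $I$'' ranges over $\mathcal{E}(I)$ together with $\{\dessin{0.3cm}{D01}\}$ when $|I|=1$, using the Disjoint Union Lemma \ref{Factorisation} to factor $C_L[D]$ as a product over connected components, and using that $\iota_1$ (resp. $\check Z$'s normalization) acts component-wise.

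**Next** I would evaluate the two factors. For the ``essential piece'' factor: $2\sum \mathcal{C}_{L_I}[D_I]$ over $D_I\in\mathcal{E}(I)$ (plus the $\dessin{0.3cm}{D01}$ correction in the $|I|=1$ case) is by Definition \ref{def:mun} precisely $\mu_{|I|}(L_I)$; so this factor is $\tfrac12\mu_{|I|}(L_I)$, which supplies the $\tfrac12$ and the $\mu_k(L_I)$ in the statement. For the chain factor on the complement $\check I$: by the same computation as in Lemma \ref{eq1} (chains always close into $-2$, and the sum of coefficients of all labelings of a chain on a given index set is the relevant sum of products of linking numbers by Lemma \ref{a_la_chain}), summing over all set partitions of $\check I$ into chains yields $(-1)^{n-k}\det\mathbb{L}_{\check I}$, where $k=|I|$. (Here one must note that the $\nu$-normalization on the circles in $\check I$ does not affect things, since those circles already carry exactly two legs each and adding an isolated chord from $\nu$ would push the degree of that component up, contributing only in degree $\geq 2$; more precisely, one checks that on a chain component the relevant coefficient is unchanged by $\check Z$ at the order we need.) Multiplying the two factors and summing over $I$ gives exactly the right-hand side of the Lemma.

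**The main obstacle**, I expect, is the bookkeeping at the boundary between ``essential piece'' and ``$\dessin{0.3cm}{D01}$ correction'' — i.e. making fully precise the claim that the only diagrams contributing to $\big(\iota_1(\check Z(L))\big)_1$ are those of the stated shape, and that the $\check Z$-normalization contributes an extra $\dessin{0.5cm}{T}$ \emph{only} via a lone $\dessin{0.3cm}{D01}$ component. This requires a careful case analysis of how $\iota_1$ on a product of components can produce a degree-$1$ trivalent diagram: a component closing into $\dessin{0.5cm}{T}$ times all others closing into constants, or a $\nu$-induced $\dessin{0.5cm}{T}$ on one empty-looking component times all others into constants — and ruling out, for instance, two components each contributing a ``half-$\dessin{0.5cm}{T}$'', which cannot happen since $\dessin{0.5cm}{T}$ is connected and $\iota_1$ treats components independently (the output is a product in $\mathcal{A}(\emptyset)$, and $\dessin{0.5cm}{T}$ is not a product of lower-degree nonempty diagrams). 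Once this structural lemma is nailed down, the rest is the two routine evaluations above, and the proof assembles by comparing with Definition \ref{def:mun} and the determinant expansion already used in Lemma \ref{eq1}.
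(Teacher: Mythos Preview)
Your proposal is correct and follows essentially the same approach as the paper: identify contributing diagrams as disjoint unions of one essential piece (or the exceptional $\dessin{0.3cm}{D01}$ when $|I|=1$) together with chains of circles on the complement, then evaluate the essential factor via Definition~\ref{def:mun} as $\tfrac12\mu_k(L_I)$ and the chain factor via Lemma~\ref{eq1} as $(-1)^{n-k}\det\mathbb{L}_{\check I}$. The paper's proof is terser on the structural case analysis you flag as the ``main obstacle'', but the argument is the same.
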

\begin{proof}[Proof]
Computing $\Big( \iota_1(\check{Z}(L)) \Big)_1$ amounts to counting those diagrams in $\check{Z}(L)$ that close into $\dessin{0.5cm}{T}$. 
As observed in Example \ref{footnote}, 
a copy of $\dessin{0.5cm}{D01}$ in $\hat{Z}(L)$ yields such a term when adding a copy of $\nu$ in $\check{Z}(L)$, 
and this is the only contribution arising from this normalization $\check{Z}$. 
Hence a diagram from $\hat{Z}(L)$ that contributes to $\Big( \iota_1(\check{Z}(L)) \Big)_1$ 
is, for some $k$ such that $1\le k\le n$, a disjoint union of 
\begin{itemize}
\item a chord diagram on $(n-k)$ circles which is a union of chains of circles, which contributes by a constant, 
\item an element of $\mathcal{E}(k)$ if $k>1$, and either an element of $\mathcal{E}(1)$ or a copy of $\dessin{0.5cm}{D01}$ if $k=1$, which contributes by a $\dessin{0.5cm}{T}$ with some coefficient.
\end{itemize}
For a subset $I$ of $k>1$ elements of $\{1,\cdots,n\}$, 
the contribution to $\Big( \iota_1(\check{Z}(L)) \Big)_1$ of all diagrams in $\mathcal{E}(k)$, labeled by $I$ in all possible ways, is given by $\frac{1}{2}\mu_k(L_I)$ by virtue of  Definition \ref{def:mun}; on the other hand, 
the proof of Lemma \ref{eq1} above tells us that the contribution of all possible unions of chains of circles labeled by $\{1,\cdots,n\}\setminus I$ 
is precisely $(-1)^k \det \mathbb{L}_{\check{I}}$. The same holds for $k=1$, noting the change in the formula for $\mu_1$ given in Definition \ref{def:mun}.
The formula follows, by taking the sum over all possible subsets $I$. 
\end{proof}


\bibliographystyle{abbrv}
\bibliography{casson}

\end{document}